\newtheorem{theorem}{Theorem}[section]
\newtheorem{lemma}[theorem]{Lemma}
\newtheorem{ajout}[theorem]{Complement}
\newtheorem{definition}[theorem]{Definition}
\newtheorem{remark}[theorem]{Remark}
\newtheorem{proposition}[theorem]{Proposition}
\newtheorem{corollary}[theorem]{Corollary}
\newtheorem{example}[theorem]{Example}
\newcommand{\Q}{\mathbb{Q}}
\newcommand{\N}{\mathbb{N}}
\newcommand{\R}{\mathbb{R}}
\newcommand{\Leg}{\mathscr{L}}
\newcommand{\KK}{\mathscr{K}}
\newcommand{\B}{\mathbb{B}}
\newcommand{\<}{\langle}
\renewcommand{\>}{\rangle}
\renewcommand{\a}{\alpha}
\renewcommand{\d}{\delta}
\newcommand{\e}{\varepsilon}
\newcommand{\g}{\gamma}
\newcommand{\s}{\sigma}
\renewcommand{\o}{\omega}
\renewcommand{\i}{\infty}
\newcommand{\p}{\partial}
\newcommand{\Id}{\operatorname{Id}}
\newcommand{\grad}{\operatorname{grad}}
\newcommand{\graph}{\operatorname{Graph}}
\title{Optimal transportation on non-compact manifolds}
\date{07 November 2007}
\author{ Albert Fathi
         \thanks{UMPA, ENS Lyon,
         46 All\'ee d'Italie,
         69007 Lyon, France.
     \textsf{e-mail: albert.fathi@umpa.ens-lyon.fr}},\ \ \ \ \
         Alessio Figalli
         \thanks{Scuola Normale Superiore,
     Piazza  dei Cavalieri 7, 56100 Pisa, Italy.
         \textsf{e-mail: a.figalli@sns.it}}}
\begin{document}

\maketitle

\begin{abstract} In this work, we show how to obtain for non-compact manifolds the results that have already been done for Monge Transport Problem for costs coming from Tonelli Lagrangians on compact manifolds. In particular, the already known results for a cost of the type
$d^r,r>1$, where $d$ is the Riemannian distance of a complete Riemannian manifold, hold without any curvature restriction.
\end{abstract}

\section{Introduction}
Monge transportation problem is more than 200 hundred years old, see \cite{monge}. It has generated a huge amount of work. It is impossible for us to give all the relevant references to previous works. There are now several books and surveys that can help the reader through the literature, see for example among others the books \cite{CaffarelliSalsa, amgisa, racrus,villani1,villani2}, and the surveys \cite{amb1, evans, gangbo}.
Both authors benefited a lot from all these sources.

Originally Monge
wanted to move, in 3-space,   rubble (d\'eblais) to build up a mound or fortification (remblais) minimizing the cost. Now if the rubble consist of masses $m_1,\dots, m_n$ at locations
$\{x_1,\dots x_n\}$, one should move them into another set of positions $\{y_1,\dots,y_n\}$ by minimizing the traveled distance taking into accounts the weights. Therefore one should try to minimize
\begin{equation}
\label{transpdist1}
\sum_{i=1}^n m_i d(x_i,T(x_i)),
\end{equation}
over all bijections $T:\{x_1,\dots x_n\}\to \{y_1,\dots,y_n\}$, where $d$ is the usual Euclidean distance on 3-space.

Nowadays, one would be more interested in minimizing the energy cost rather than the traveled distance. Therefore one would try rather to minimize
\begin{equation}
\label{transpdist2}
\sum_{i=1}^n m_i d^2(x_i,T(x_i)).
\end{equation}

Of course, one would like to generalize to continuous rather than just discrete distributions of matter. Therefore Monge transportation problem is now stated in the following general form: given two probability measures $\mu$ and $\nu$,
defined on the measurable spaces $X$ and $Y$, find a measurable map $T:X \rightarrow Y$ with

\begin{equation}
\label{transportcond}
T_\sharp \mu=\nu,
\end{equation}
i.e.
$$
\nu(A)=\mu\left(T^{-1}(A)\right) \ \ \ \forall A \subset Y \mbox{ measurable,}
$$
and in such a way that $T$ minimize the transportation cost. This last condition means
$$
\int_X c(x,T(x))\,d\mu(x) = \min_{S_\sharp \mu=\nu} \left\{ \int_X c(x,S(x))\,d\mu(x) \right\},
$$
where $c:X \times Y \rightarrow \R$ is some given cost function, and the minimum is taken over all measurable maps $S:X\to Y$ with $S_\sharp \mu=\nu$.
When condition (\ref{transportcond}) is satisfied, we say that $T$ is a \textit{transport map}, and if $T$ minimize also the cost we
call it an \textit{optimal transport map}.

It is easy to build examples where the Monge problem is ill-posed simply because there is no transport map:
this happens for instance when $\mu$ is a Dirac mass while $\nu$ is not. This means that one needs some restrictions on the measures $\mu$ and $\nu$.

Even in Euclidean spaces, and the cost $c$ equal to the Euclidean
distance or its square, the problem of the existence of an optimal
transport map is far from being trivial. Due to the strict
convexity of the square of the Euclidean distance, case
(\ref{transpdist2}) above is simpler to deal with than case
(\ref{transpdist1}). The reader should consult the books and
surveys given above to have a better view of the history of the
subject, in particular Villani's second book on the subject
\cite{villani2}. However for the case  where the cost is a
distance, like in  (\ref{transpdist1}), one should cite at least
the work of Sudakov \cite{sudakov}, Evans-Gangbo
\cite{evans-gangbo}, Feldman-McCann \cite{feldman-mccann},
Caffarelli-Feldman-McCann \cite{caffarelli-feldman-mccann},
Ambrosio-Pratelli \cite{ambrosio-pratelli}, and Bernard-Buffoni
\cite{bernbuf2}. For the case  where the cost is the square of the
Euclidean or of a  Riemannian distance, like in
(\ref{transpdist2}), one  should cite at least the work of
Knott-Smith \cite{knott-smith}, Brenier \cite{brenier},
Rachev-R\"uschendorf \cite{rachev-ruschendorf-article},
Gangbo-McCann \cite{gangbo-mccann}, McCann \cite{mccann3}, and
Bernard-Buffoni \cite{bernbuf}.

Our work is related to the case where the cost behaves like a square of a Riemannian distance. It is strongly inspired by the work of Bernard-Buffoni \cite{bernbuf}. In fact, we prove the non-compact version of this last work adapting some techniques that were first used in the euclidean case in \cite{amgisa} by Ambrosio,Gigli, and Savar\'e.
We show that the Monge transport problem can be solved for the square distance on any complete Riemannian manifold {\it without any assumption on the compactness or curvature}, with the usual restriction on the measures. Most of the arguments in this work are well known to specialists, at least in the compact case, but they have not been put together before and adapted to the case we treat. Of course, there is a strong intersection with some of the results that appear in \cite{villani2}.
For the case where the cost behaves like  the distance of a complete non-compact Riemannian manifold see the work \cite{figalli} of the second author.

We will prove a generalization of the following theorem (see Theorems \ref{thLagragiancost} and \ref{coutRiemannien2}):
\begin{theorem}\label{coutRiemannien}{\sl Suppose that $M$ is a connected complete Riemannian manifold, whose Riemannian distance is denoted by $d$. Suppose that $r>1$. If $\mu$ and $\nu$ are probability (Borel) measures on $M$, with $\mu$ absolutely continuous with respect to Lebesgue measure, and
$$
\int_Md^r(x,x_0)\,d\mu(x)<\i \quad \text{and} \quad \int_Md^r(x,x_0)\,d\nu(x)<\i
$$
for some given $x_0\in M$, then we can find a transport map $T:M\to M$, with $T_\sharp\mu=\nu$, which is optimal for the cost $d^r$ on $M\times M$. Moreover, the map $T$ is    uniquely determined $\mu$-almost everywhere.}
\end{theorem}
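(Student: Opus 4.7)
\emph{Strategy.} I would view $d^r(x,y)$ as the value function of a Tonelli-type Lagrangian. Taking $L(x,v)=\|v\|_x^r$ on $TM$, a standard reparametrization yields $d^r(x,y)=\inf_\gamma\int_0^1\|\dot\gamma(t)\|_{\gamma(t)}^r\,dt$ over absolutely continuous paths $\gamma:[0,1]\to M$ joining $x$ to $y$, and completeness of $(M,d)$ ensures that the infimum is realized. The plan is therefore to derive Theorem \ref{coutRiemannien} as a corollary of the Tonelli cost statement Theorem \ref{thLagragiancost} announced above, the proof reducing to verifying that the moment hypotheses on $\mu$ and $\nu$ place us in its scope.

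\emph{Setting up the Kantorovich problem.} By the triangle inequality together with convexity of $t\mapsto t^r$, one has $d^r(x,y)\leq 2^{r-1}\bigl(d^r(x,x_0)+d^r(y,x_0)\bigr)$, so the moment assumptions yield $\int d^r\,d(\mu\otimes\nu)<\infty$. Lower semicontinuity of $d^r$ together with weak compactness of the set $\Pi(\mu,\nu)$ of transport plans on the Polish space $M\times M$ produces a minimizing plan $\pi$, and a standard argument shows $\pi$ is concentrated on a $c$-cyclically monotone set. Kantorovich duality then delivers a $c$-concave potential $\phi:M\to\R\cup\{-\infty\}$ with $\pi$ supported in the $c$-superdifferential $\partial^c\phi$.

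\emph{From potential to transport map.} The Tonelli structure of $L$ makes $d^r$ locally semiconcave off the diagonal, and this semiconcavity is inherited by $\phi$ on the set where it is finite. Locally semiconcave functions are differentiable Lebesgue-almost everywhere, so the absolute continuity of $\mu$ implies differentiability of $\phi$ at $\mu$-a.e.\ $x$. At such an $x$ the $c$-superdifferential reduces to a single point: via the Legendre transform of $L$, $d\phi(x)$ determines a unique velocity $v_x\in T_xM$, and $T(x)$ is the time-$1$ image of $(x,v_x)$ under the Euler--Lagrange flow, i.e.\ the endpoint of a length-minimizing extremal. Hence $\pi=(\Id\times T)_\sharp\mu$, so $T_\sharp\mu=\nu$ and $T$ is optimal. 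Uniqueness $\mu$-a.e.\ follows from the classical half-sum argument: if $T'$ were another optimal map, the plan $\tfrac12\bigl((\Id\times T)_\sharp\mu+(\Id\times T')_\sharp\mu\bigr)$ would still be optimal and hence (by the same argument) induced by a single-valued map, forcing $T=T'$.

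\emph{Main obstacle.} In the compact case each of these steps is routine, but non-compactness creates two genuine difficulties: the $c$-concave potential $\phi$ may be $-\infty$ on large sets, and the a priori bounds that force its $c$-superdifferential to be nonempty fail globally. The crucial point is therefore to show that $\phi$ is finite and that $\partial^c\phi(x)\neq\emptyset$ at $\mu$-a.e.\ $x$, and that the resulting $T$ actually pushes $\mu$ onto all of $\nu$ rather than onto a sub-probability. This is precisely what the moment bounds $\int d^r\,d\mu,\int d^r\,d\nu<\infty$ are used for, and it is this non-compact bookkeeping that Theorem \ref{thLagragiancost} handles in the Tonelli framework; applying it with $L(x,v)=\|v\|_x^r$ then yields Theorem \ref{coutRiemannien}.
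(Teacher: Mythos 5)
Your overall chain of reasoning (value-function representation of $d^r$, semi-concavity of the cost, $\mu$-a.e.\ differentiability of the potential, twist condition via the Legendre transform, and the half-sum uniqueness argument) is exactly the right structure and matches the paper's strategy. However, there is a genuine gap at the very point you rely on: you cannot derive Theorem \ref{coutRiemannien} as a corollary of Theorem \ref{thLagragiancost} by taking $L(x,v)=\lVert v\rVert_x^r$, because for $r\neq 2$ this $L$ is \emph{not} a Tonelli Lagrangian in the sense of Definition \ref{TonelliLagrangian}: it fails condition (a') (it is only {\rm C}$^1$, not {\rm C}$^2$ at $v=0$) and condition (b') ($\partial^2 L/\partial v^2$ is singular or blows up at the zero section). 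Theorem \ref{thLagragiancost} is stated and proved for genuine Tonelli Lagrangians, and the paper explicitly flags this obstruction just before Theorem \ref{coutRiemannien2}: ``the cost $d^r$ with $r>1$ does not come from a Tonelli Lagrangian for $r\neq 2$.''

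The correct route — which is close in spirit to what you wrote but must be assembled differently — is to observe that $L_{r,g}$ is a \emph{weak} Tonelli Lagrangian (Example \ref{ExampleRiemannian}), and that the ingredients you need survive in the weak case: $c_{1,L_{r,g}}=d^r$ is locally semi-concave by Theorem \ref{costsemiconc}, hence satisfies condition (i) of Theorem \ref{mainthm} via Proposition \ref{uniformlocalsemiconcave}; and the left twist condition (ii) holds because $L_{r,g}$ satisfies condition (UC') for the \emph{geodesic flow} (Proposition \ref{RiemannianUFC'} / Lemma \ref{lemmaUC'}), not the nonexistent Euler–Lagrange flow of a C$^1$ Lagrangian. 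One then applies Theorem \ref{mainthm} (and Complement \ref{compmainthm} for the explicit form of $T$) directly, rather than Theorem \ref{thLagragiancost}. Your integrability estimate $d^r(x,y)\leq 2^{r-1}\bigl(d^r(x,x_0)+d^r(y,x_0)\bigr)$ is exactly the one the paper uses to verify condition (iv) of Theorem \ref{mainthm}, so that part of your argument stands as written.
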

We recall that a measure on a smooth manifold is absolutely continuous with respect to the Lebesgue measure if, when one looks at it in charts, it is absolutely continuous with respect to Lebesgue measure. Again we note that there is no restriction on the curvature of $M$ in the theorem above.

The paper is structured as follows:
in section \ref{section2} we recall some known results on the general theory of the optimal transport problem, and we introduce some useful definitions.
Then, in section \ref{sectionmain} we will give very general results for the existence and the uniqueness of optimal transport maps
(Theorems \ref{keythm} and \ref{mainthm}, and Complement \ref{compmainthm}).
In section \ref{sectLagrangian} the above results are applied in the case of costs functions coming from (weak) Tonelli Lagrangians
(Theorems \ref{thLagragiancost} and \ref{coutRiemannien2}).
In section \ref{sectinterp}, we study the so called ``dispacement interpolation'', showing a countably
Lipschitz regularity for the transport map starting from an intermidiate time (Theorem \ref{interpolation}).
Finally, in the appendix, we collect all the tecnical results about semi-concave functions and Tonelli Lagrangians used in our proofs.

\emph{Acknowledgements:} The authors would like to thank
particularly C\'edric Villani for communicating to them his
Saint-Flour Lecture Notes \cite{villani2}. They also warmly thank
the anonymous referee for his comments and for pointing out to
them Remark \ref{rmkreferee}. The first author learned most of the
subject from invaluable informal almost daily conversations with
C\'edric Villani.

\section{Background and some definitions}\label{section2}
A major advance to solve the Monge transport problem is due to Kantorovich. He proposed in \cite{kant1}, \cite{kant2} a notion of weak solution of the
transport problem. He suggested to look for \textit{plans} instead of transport maps, that is probability measures $\g$ in $X \times Y$
whose marginals are $\mu$ and $\nu$, i.e.
$$
(\pi_X)_\sharp \gamma=\mu \quad \text{and} \quad (\pi_Y)_\sharp \gamma=\nu,
$$
where $\pi_X: X \times Y \rightarrow X$ and $\pi_Y: X \times Y \rightarrow Y$ are the canonical projections.
Denoting by $\Pi(\mu,\nu)$ the set of plans, the new minimization problem becomes then the following:

\begin{equation}
\label{kantprob}
C(\mu,\nu) =\min_{\g \in \Pi(\mu,\nu)} \left\{ \int_{M \times M} c(x,y)\,d\gamma(x,y) \right\}.
\end{equation}
If $\gamma$ is a minimizer for the Kantorovich formulation, we say that it is an \textit{optimal plan}.
Due to the linearity of the constraint $\g \in \Pi(\mu,\nu)$, it turns out that weak topologies can be used to provide existence
of solutions to (\ref{kantprob}): this happens for instance whenever $X$ and $Y$ are Polish spaces and $c$ is lower semicontinuous
(see \cite{racrus}, \cite[Proposition 2.1]{villani1} or \cite{villani2}). The connection between the formulation of Kantorovich and that of Monge can be seen by noticing that any transport
map $T$ induces the plan defined by $(\operatorname{Id}_X \tilde\times T)_\sharp \mu$ which is concentrated on the graph of $T$, where the map $\operatorname{Id}_X \tilde\times T:X\to X\times Y$ is defined by
$$\operatorname{Id}_X \tilde\times T(x)=(x,T(x)).$$

It is well-known that a linear minimization problem with convex constraints, like (\ref{kantprob}), admits a dual formulation.
Before stating the duality formula, we make some definitions similar to that of the weak KAM theory (see \cite{fathi}):

\begin{definition}[\bf $c$-subsolution]{\rm
We say that a pair of functions $\varphi:X \rightarrow \R \cup \{+\i\}$, $\psi:Y \rightarrow \R \cup \{-\i\}$
is a \textit{$c$-subsolution} if
$$
\forall (x,y) \in X \times Y, \quad  \psi(y)-\varphi(x) \leq c(x,y).
$$}
\end{definition}
Observe that when $c$ is measurable and bounded below, and  $(\varphi,\psi)$ is a $c$-subsolution with $\varphi\in L^1(\mu),\psi\in L^1(\nu)$, then
\begin{align*}
\forall \gamma\in \Pi(\mu,\nu), \quad \int_Y \psi \,d\nu-\int_X \varphi \,d\mu&=\int_{X \times Y} (\psi(y)-\varphi(x)) \,d\gamma(x,y)\\
&\leq \int_{X \times Y}c(x,y) \,d\gamma(x,y).
\end{align*}
If moreover $\int_{X \times Y}c(x,y) \,d\gamma <+\i$, and
$$
\int_{X \times Y} (\psi(y)-\varphi(x)) \,d\gamma(x,y) = \int_{X \times Y}c(x,y) \,d\gamma(x,y),
$$
then one would obtain the following equality:
$$
\psi(y)-\varphi(x) = c(x,y) \quad \text{for  $\gamma$-a.e. $( x,y)$}
$$
(without any measurability or integrability assumptions on $(\varphi,\psi)$, this is just a formal
computation).

\begin{definition}[\bf Calibration]{\rm
Given an optimal plan $\gamma$, we say that a $c$-subsolution $(\varphi,\psi)$
is \textit{$(c,\gamma)$-calibrated} if $\varphi$ and $\psi$ are Borel measurable, and
$$
\psi(y)-\varphi(x) = c(x,y) \quad \text{for  $\gamma$-a.e. $( x,y)$}.
$$}
\end{definition}

\begin{theorem}[\bf Duality formula]
\label{thmduality}{\sl
Let $X$ and $Y$ be Polish spaces equipped with probability measures $\mu$ and $\nu$ respectively,
$c: X \times Y \rightarrow \R$ a lower semicontinuous cost function bounded from below such that
the infimum in the Kantorovitch problem (\ref{kantprob}) is finite.
Then
a transport plan $\g \in \Pi(\mu,\nu)$ is optimal if and only if
there exists a $(c,\gamma)$-calibrated subsolution $(\varphi,\psi)$.}
\end{theorem}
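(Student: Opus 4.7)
The $(\Leftarrow)$ direction is essentially a restatement of the inequality displayed just before the definition of calibration. If $(\varphi,\psi)$ is a $(c,\gamma)$-calibrated subsolution, then for any $\gamma'\in\Pi(\mu,\nu)$ the subsolution inequality gives $\int_{X\times Y}(\psi(y)-\varphi(x))\,d\gamma'\leq\int c\,d\gamma'$; on the other hand, the calibration identity $\psi(y)-\varphi(x)=c(x,y)$ holds $\gamma$-a.e., so integration against $\gamma$ yields equality. Since $\gamma$ and $\gamma'$ share marginals, $\int(\psi-\varphi)\,d\gamma=\int(\psi-\varphi)\,d\gamma'$, and therefore $\int c\,d\gamma\leq\int c\,d\gamma'$. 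A minor technical step is to check that $\varphi\in L^1(\mu)$ and $\psi\in L^1(\nu)$, which one arranges by truncating the values $\pm\infty$; this is legitimate because the finiteness of $\int c\,d\gamma$ forces the exceptional sets to be $\mu$- and $\nu$-negligible.

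The heart of the theorem is the $(\Rightarrow)$ direction, and the plan is the classical Rockafellar-style construction based on $c$-cyclical monotonicity. First I would show that $\supp\gamma$ is $c$-cyclically monotone, i.e.\ for every finite collection $(x_1,y_1),\dots,(x_n,y_n)\in\supp\gamma$ and every permutation $\sigma$ of $\{1,\dots,n\}$,
$$\sum_{i=1}^{n}c(x_i,y_i)\leq\sum_{i=1}^{n}c(x_{\sigma(i)},y_i).$$
The proof is the familiar ``cyclical swap'': if strict inequality held at some tuple, the lower semicontinuity of $c$ would let one choose small neighborhoods around each $(x_i,y_i)$ on which a swap of a small fraction of transport mass strictly decreases the total cost, contradicting optimality of $\gamma$. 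Executing this in the Polish lsc setting requires some measure-theoretic care (tightness, construction of the perturbation plan using Radon--Nikodym-type decompositions), but the argument is by now standard.

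Next, fix a point $(x_0,y_0)\in\supp\gamma$ and apply the Rockafellar construction: set
$$\varphi(x):=\sup\Bigl\{\sum_{i=0}^{n-1}\bigl[c(x_i,y_i)-c(x_{i+1},y_i)\bigr] : n\geq 1,\ (x_i,y_i)\in\supp\gamma\ \text{for }1\leq i\leq n-1,\ x_n=x\Bigr\},$$
with the convention that the first pair is the fixed $(x_0,y_0)$. Cyclical monotonicity gives $\varphi(x_0)\geq 0$, and the trivial ``$n=1$'' term combined with cyclical monotonicity prevents $\varphi$ from being identically $+\infty$ where it matters. Then define the $c$-conjugate
$$\psi(y):=\inf_{x\in X}\bigl[c(x,y)+\varphi(x)\bigr].$$
By construction $\psi(y)-\varphi(x)\leq c(x,y)$ everywhere, so $(\varphi,\psi)$ is a $c$-subsolution. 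To verify calibration, one checks that for $(x,y)\in\supp\gamma$ the infimum defining $\psi(y)$ is attained at that very $x$: the chain of $n+1$ pairs ending at $(x,y)$ shows $\psi(y)\geq c(x,y)+\varphi(x)$ by the definition of $\varphi$, while the reverse inequality is built into $\psi$. This yields $\psi(y)-\varphi(x)=c(x,y)$ on $\supp\gamma$, and in particular $\gamma$-a.e.

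The main obstacle, and the reason this is non-trivial in the Polish setting, is Borel measurability of $\varphi$ and $\psi$: the defining supremum ranges over all finite tuples in $\supp\gamma$, which is not \emph{a priori} a Borel operation. The standard fix is to replace $\supp\gamma$ by a countable dense subset in the supremum defining $\varphi$; lower semicontinuity of $c$ ensures this gives the same values $\gamma$-a.e., and a countable supremum of Borel functions is Borel. A secondary issue is that $\varphi,\psi$ may take the value $-\infty$ (resp.\ $+\infty$) on $\mu$-null (resp.\ $\nu$-null) sets; one simply modifies them to $0$ on those sets without affecting either the subsolution inequality or the calibration identity $\gamma$-almost everywhere.
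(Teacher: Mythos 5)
The paper does not prove Theorem~\ref{thmduality} at all: immediately after the statement it simply refers the reader to \cite{schteich} and to \cite[Theorem 5.9 (ii)]{villani2}. So there is no ``paper's proof'' to compare against; you are essentially being asked to reconstruct Schachermayer--Teichmann. Your overall architecture ($c$-cyclical monotonicity of the optimal plan, then the Rockafellar construction of $\varphi$ and its $c$-conjugate $\psi$) is indeed the one used in those references, so the route is right. However, three of the points you wave at as ``standard'' are precisely the places where the theorem becomes hard for merely lower semicontinuous Borel costs, and your proposed fixes do not quite work.

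First, the cyclical-swap argument as you state it is broken. You want neighborhoods of the swapped pairs $(x_{\sigma(i)},y_i)$ on which the cost stays close to $c(x_{\sigma(i)},y_i)$ \emph{from above}, but lower semicontinuity only gives control from below; at the swapped pairs $c$ could jump up badly. This is exactly why, in the lsc setting, $c$-cyclical monotonicity is formulated and proved $\gamma^{\otimes n}$-almost everywhere rather than on $\supp\gamma$, or else $c$ is approximated from below by continuous bounded functions (which exist by the Baire/Polish lemma) and one passes to the limit. Second, your measurability fix for $\varphi$ is not valid: each competitor in the Rockafellar supremum is a sum of terms $c(x_i,y_i)-c(x_{i+1},y_i)$, hence a difference of lsc functions of the intermediate points, so the supremum over a countable dense subset of $\supp\gamma$ can be strictly smaller than the true supremum. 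Lower semicontinuity of $c$ simply does not help you here, and this is the genuine reason the cited proofs go through Suslin/analytic-set arguments or through the continuous approximation route. Third, in the $(\Leftarrow)$ direction, the marginal identity $\int(\psi-\varphi)\,d\gamma=\int(\psi-\varphi)\,d\gamma'$ requires $\varphi\in L^1(\mu)$ and $\psi\in L^1(\nu)$, and the failure of that is not confined to the sets where $\varphi=+\infty$ or $\psi=-\infty$: $\varphi$ and $\psi$ can be everywhere finite and still non-integrable even when $\int c\,d\gamma<\infty$, since only the difference $\psi-\varphi$ is pinned down along $\gamma$. The standard remedy uses the lower bound on $c$ to truncate from one side (e.g.\ replace $\psi$ by $\min(\psi,M)$ and $\varphi$ by the corresponding $c$-transform) and then lets $M\to\infty$ by monotone convergence; your two-sided truncation with a null-set argument does not by itself deliver the needed $L^1$ bounds. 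In short: the skeleton is correct and matches the cited literature, but each of the three ``standard fixes'' you invoke needs to be replaced by the actual argument from \cite{schteich} or \cite[Chapter 5]{villani2}.
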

For a proof of this theorem see \cite{schteich} and \cite[Theorem 5.9 (ii)]{villani2}.\par
In this work we  study Monge's problem on manifolds for a large class of cost functions induced by Lagrangians like in \cite{bernbuf}, where the authors consider the case of compact manifolds. We  generalize their result to arbitrary non-compact manifolds.

Following the general scheme of proof, we will first prove a result on more general costs, see Theorem \ref{mainthm}. In this general result, the fact that the target space for the Monge transport is a manifold is not necessary. So we will assume that only
the source space (for the Monge transport map) is a manifold.

Let $M$ be a $n$-dimensional manifold (Hausdorff and with a countable basis),
$N$ a Polish space, $c: M \times N \rightarrow \R$ a cost function,
$\mu$ and $\nu$ two probability measures on $M$ and $N$ respectively.
We want to prove existence and uniqueness of an  optimal transport map $T : M \rightarrow N$, under some reasonable hypotheses on $c$ and $\mu$.

One of the conditions on the cost  $c$  is given in the following definition:
\begin{definition}[Twist Condition]\label{twistcondition}{\rm
For a given cost function $c(x,y)$, we define the \textit{skew left Legendre transform} as the partial map
$$
\Lambda_c^l: M \times N \rightarrow T^*M,
$$
$$
\Lambda_c^l(x,y)=(x,\frac{\partial c}{\partial x}(x,y)),
$$
whose domain of definition is
$$
\mathcal D(\Lambda_c^l)=\left\{ (x,y) \in M \times N\mid \frac{\partial c}{\partial x}(x,y) \mbox{ exists} \right\}.
$$
Moreover, we say that $c$ satisfies the \textit{left twist condition} if
$\Lambda_c^l$ is injective on $\mathcal D(\Lambda_c^l)$.

One can define similarly the \textit{skew right Legendre transform} $
\Lambda_c^r: M \times N \rightarrow T^*N$ by $
\Lambda_c^r(x,y)=(y,\frac{\partial c}{\partial y}(x,y)),
$. The domain of definition of $\Lambda_c^r$ is $
\mathcal D(\Lambda_c^r)=\{ (x,y) \in M \times N\mid \frac{\partial c}{\partial x}(x,y) \mbox{ exists}\}$. We say that $c$ satisfies the \textit{right twist condition} if
$\Lambda_c^r$ is injective on $\mathcal D(\Lambda_c^r)$.}
\end{definition}

The usefulness of these definitions will be clear in the section \ref{sectLagrangian}, in which we will
treat the case where $M=N$ and the cost is induced by a Lagrangian. This condition has appeared already in the subject. It has been known (explicitly or not) by several people, among them Gangbo (oral communication) and Villani (see \cite[page 90]{villani1}). It is used in \cite{bernbuf},
since it is always satisfied for a cost coming from a Lagrangian, as we will see below. We borrow the terminology ``twist condition" from the theory of Dynamical Systems: if $h:\R\times \R\to \R, (x,y)\mapsto h(x,y)$ is C$^2$, one says that $h$ satisfies the twist condition if there exists a constant $\alpha>0$ such that $\displaystyle\frac{\partial^2h}{\partial x\partial y} \geq \alpha$ everywhere. In that case both maps $\Lambda^l_h:\R\times\R\to \R\times\R, (x,y)\mapsto (x,\partial h/\partial x(x,y))$ and  $\Lambda^r_h:\R\times\R\to \R\times\R, (x,y)\mapsto (y,\partial h/\partial y(x,y))$ are C$^1$ diffeomorphisms. The twist map $f:\R\times\R\to \R\times\R$ associated to $h$ is determined by $f(x_1,v_1)=(x_2,v_2)$, where
$v_1=-\partial h/\partial x(x_1,x_2), v_2=\partial h/\partial y(x_1,x_2)$, which means
$f(x_1,v_1)= \Lambda^r_h\circ [\Lambda^l_h]^{-1}(x_1,-v_1)$, see \cite{mather} or
\cite{forni-mather}.

We now recall some useful measure-theoretical facts that we will need in the sequel.
\begin{lemma}
\label{lemma meas}
{\sl Let $M$ be an $n$-dimensional manifold, $N$ be a Polish space, and let $c:M \times N \to \R$ be a measurable function such that
$x \mapsto c(x,y)$ is continuous for any $y \in N$. Then the set
$$
\{ (x,y) \mid \frac{\p c}{\p x}(x,y) \ \text{exists} \} \quad \text{is Borel measurable}.
$$
Moreover $(x,y) \mapsto \frac{\p c}{\p x}(x,y)$ is a Borel function on that set.}
\end{lemma}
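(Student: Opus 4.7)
The plan is to reduce the statement to a local Euclidean question via countably many charts, and then verify differentiability through a countable family of Borel operations, exploiting the continuity hypothesis in the $x$-variable to replace uncountable suprema by suprema over rationals.

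Since $M$ is second-countable, it admits a countable cover by coordinate charts $(U_k,\phi_k)$ with $\phi_k:U_k\to V_k\subset\R^n$ open. The existence of $\frac{\partial c}{\partial x}(x,y)\in T_x^*M$ at $(x,y)\in U_k\times N$ is equivalent to the Fr\'echet differentiability at $\phi_k(x)$ of $\tilde c_k(z,y):=c(\phi_k^{-1}(z),y)$, and the differential transforms Borel-measurably under the chart. Hence it suffices to show that, for a Borel function $\tilde c:V\times N\to\R$ ($V\subset\R^n$ open) with $\tilde c(\cdot,y)$ continuous for each $y$, the set $\mathcal E$ of differentiability points is Borel in $V\times N$ and $(x,y)\mapsto D_x\tilde c(x,y)$ is Borel on $\mathcal E$.

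For each $i=1,\dots,n$ and each $h\in\Q\setminus\{0\}$, the function $(x,y)\mapsto \bigl[\tilde c(x+he_i,y)-\tilde c(x,y)\bigr]/h$ is Borel. Set
$$D_i^{\pm}(x,y):=\limsup_{\substack{h\in\Q\setminus\{0\}\\h\to 0}}\!\!/\liminf\; \frac{\tilde c(x+he_i,y)-\tilde c(x,y)}{h},$$
computed as a countable inf over rational $\delta>0$ of countable sup/inf over $h\in\Q\cap(-\delta,\delta)\setminus\{0\}$; both are Borel $[-\infty,\infty]$-valued functions. Because $\tilde c(\cdot,y)$ is continuous, the full limit equals these rational limits, so $\partial\tilde c/\partial x_i$ exists at $(x,y)$ exactly on the Borel set $A_i=\{D_i^+=D_i^-\in\R\}$, with value $D_i^+$. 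Define $G(x,y):=(D_1^+(x,y),\dots,D_n^+(x,y))$ on $A:=\bigcap_iA_i$ and, say, $G\equiv 0$ off $A$; this is a Borel map $V\times N\to\R^n$ extending the candidate gradient.

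Full Fr\'echet differentiability at $(x,y)\in A$ amounts to
$$\lim_{r\to 0^+}\;\sup_{0<|h|\le r}\frac{\bigl|\tilde c(x+h,y)-\tilde c(x,y)-\langle G(x,y),h\rangle\bigr|}{|h|}=0.$$
Continuity of $\tilde c(\cdot,y)$ and of $h\mapsto\langle G(x,y),h\rangle$ implies that the supremum over the open ball equals the supremum over the countable dense subset $\Q^n\cap\bar B(0,r)\setminus\{0\}$, hence is Borel in $(x,y)$ for each $r$; taking the infimum over $r\in\Q_{>0}$ keeps it Borel. Therefore $\mathcal E=A\cap\{\text{this infimum is }0\}$ is Borel, and on $\mathcal E$ one has $\partial\tilde c/\partial x=G$, which is Borel. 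Re-assembling via the charts $(U_k,\phi_k)$ and using the canonical Borel structure of $T^*M$ yields the lemma.

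The main subtlety, and the step I expect to be the main obstacle, is upgrading pointwise existence of partial derivatives to genuine differentiability in a measurable way: one must write the Fr\'echet condition as a countable chain of Borel operations. The continuity hypothesis in $x$ is exactly what allows the replacement of the uncountable supremum over $h\in B(0,r)$ by a countable one over rational increments, so that no non-measurable quantifier survives.
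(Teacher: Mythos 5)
Your proof is correct, and it shares the essential mechanism with the paper's — reduce to charts, then replace all uncountable suprema over increments $h$ by suprema over rationals, which is legitimate precisely because of the continuity hypothesis in $x$ — but the decomposition differs in a small but genuine way. The paper fixes a countable dense family $(T_k)_k$ of linear forms and declares $(x,y)$ a point of differentiability exactly when $\inf_j\inf_k L_{j,k}(x,y)=0$, where $L_{j,k}$ measures how well $T_k$ approximates $c(\cdot,y)$ near $x$ at scale $1/j$; the derivative itself is then recovered as a separate step via partial-derivative difference quotients. You instead first extract the only possible candidate for the differential — the Borel vector $G$ of limsup/liminf rational difference quotients on the Borel set $A$ where all partial derivatives exist — and then test the Fr\'echet condition against that single candidate, producing the differentiability set $\mathcal E\subset A$ directly together with the Borel map $G|_{\mathcal E}$. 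Your route has the advantage of delivering the Borel measurability of $(x,y)\mapsto\frac{\partial c}{\partial x}(x,y)$ as an immediate byproduct rather than as a separate observation, and it sidesteps the (unproved in the paper) implication that $\inf_{j,k}L_{j,k}=0$ forces the approximating forms to converge to an actual derivative. The paper's formulation is marginally shorter because it never has to introduce the intermediate set $A$. One technicality you both gloss over: on a bounded chart domain $V$ the shifted point $x+h$ may exit $V$, so one should either work on all of $\R^n$ (as the paper does) or exhaust $V$ by relatively compact open subsets and take $r$ smaller than the distance to $\partial V$; this is routine and does not affect correctness.
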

\begin{proof} This a standard result in measure theory, we give here just a sketch of the proof.

By the locality of the statement, using charts we can assume $M=\R^n$.
Let $T_k:\R^n \to \R^n$ be a dense countable family of linear maps.
For any $j,k \in \N$, we consider the Borel function
\begin{align*}
L_{j,k}(x,y):&=\sup_{|h| \in (0,\frac{1}{j})} \frac{|c(x+h,y)-c(x,y) - T_k(h)|}{|h|}\\
&=\sup_{|h| \in (0,\frac{1}{j}) , h\in \Q^n} \frac{|c(x+h,y)-c(x,y) - T_k(h)|}{|h|},
\end{align*}
where in the second equality we used the continuity of $x \mapsto c(x,y)$.
Then it is not difficult to show that the set of point where $\frac{\p c}{\p x}(x,y)$ exists can be written as
$$
\{ (x,y) \mid \inf_j \inf_k L_{j,k}(x,y)=0\},
$$
which is clearly a Borel set.

To show that $x \mapsto \frac{\p c}{\p x}(x,y)$ is Borel,
it suffices to note that the partial derivatives
$$
\frac{\partial c}{\partial x_i}(x,y)=\lim_{\ell \to\infty}
\frac{c(x_1,\dots,x_i+\frac{1}{\ell},\dots,x_n,y)-c(x_1,\dots,x_i,\dots,x_n,y)}{1/\ell}
$$
are countable limits of continuous functions, and hence are Borel measurable.
\end{proof}

Therefore, by the above lemma, $\mathcal D(\Lambda_c^l)$ is a Borel set.
If we moreover assume that $c$ satisfies the left twist condition (that is, $\Lambda_c^l$ is injective on $\mathcal D(\Lambda_c^l)$),
then one can define
$$
(\Lambda_c^l)^{-1}:T^*M \supset \Lambda_c^l(\mathcal D(\Lambda_c^l)) \to \mathcal D(\Lambda_c^l) \subset M \times N.
$$
Then, by the injectivity assumption, one has that $\Lambda_c^l(\mathcal D(\Lambda_c^l))$ is still a Borel set,
and $(\Lambda_c^l)^{-1}$ is a Borel map (see \cite[Proposition  8.3.5 and Theorem  8.3.7]{cohn}, \cite{federer}).
We can so extend $(\Lambda_c^l)^{-1}$ as a Borel map on the whole $T^*M$ as
$$
\Lambda_c^{l,inv}(x,p) =\left\{
\begin{array}{ll}
(\Lambda_c^l)^{-1}(x,p) & \text{if } p \in T_x^*M \cap \Lambda_c^l(\mathcal D(\Lambda_c^l)),\\
(x,\bar y) & \text{if } p \in T_x^*M \setminus \Lambda_c^l(\mathcal D(\Lambda_c^l)),
\end{array}
\right.
$$
where $\bar y$ is an arbitrary point, but fixed point, in $N$.

\section{The main result}
\label{sectionmain}
In order to have general results of existence and uniqueness of transport maps
which are sufficiently flexible so that they can be used also in other situations,
and to well show where measure-theoretic problems enter in the proof of the existence of the transport map,
we will first give a general result where no measures are present
(see in the appendix \ref{defsemicon} for the definition of locally semi-concave function and
\ref{defcountLip} for the definition of countably $(n-1)$-Lipschitz set).
\begin{theorem}
\label{keythm}{\sl Let $M$ be a smooth (second countable) manifold, and let $N$ be a Polish space.
Assume that the cost $c:M\times N\to\R$ is Borel measurable, bounded from below, and satisfies the following conditions:

\begin{enumerate}
\item[{\rm (i)}]
the family of maps $x \mapsto c(x,y)=c_y(x)$ is locally semi-concave in $x$ locally uniformly in $y$,
\item[{\rm (ii)}]
the cost $c$ satisfies the left twist condition.
\end{enumerate}
Let $(\varphi,\psi)$ be a $c$-subsolution, and consider the set $G_{(\varphi,\psi)} \subset M \times N$ given by
$$
G_{(\varphi,\psi)} =\{(x,y) \in M \times N \mid \psi(y)-\varphi (x)=c(x,y) \}.
$$
We can find  a Borel countably $(n-1)$-Lipschitz set $E \subset M$ and a Borel measurable map $T:M\to N$ such that
$$G_{(\varphi,\psi)} \subset \graph(T)\cup \pi_M^{-1}(E),$$
where  $\pi_M :M \times N \to M$ is the canonical projection, and $\graph(T)=\{(x,T(x))\mid x\in M\}$ is the graph of $T$.

In other words, if we define $P=\pi_M\bigl(G_{(\varphi,\psi)} \bigr)\subset M$
the part of
$G_{(\varphi,\psi)}$ which is above $P \setminus E$  is contained a Borel graph.

More precisely,
we will prove that there exist an increasing sequence of
locally semi-convex functions $\varphi_n:M\to\R$, with $\varphi\geq\varphi_{n+1}\geq\varphi_n$ on $M$, and an increasing sequence of Borel subsets $C_n$ such that
\begin{itemize}
\item For $x\in C_n$,
the derivative $d_x\varphi_n$ exists, $\varphi_{n+1}(x)=\varphi_n(x)$, and $d_x\varphi_{n+1}=d_x\varphi_n$.
\item If we set $C =\cup_n C_n$, there exists a Borel countably $(n-1)$-Lipschitz set $E \subset M$
such that $P \setminus E \subset C$.
\end{itemize}
Moreover, the Borel map $T:M\to N$ is such that
\begin{itemize}
\item For every $x\in C_n$, we have
$$(x,T(x))=\Lambda^{l,inv}_c(x,-d_x\varphi_n),$$
where $\Lambda^{l,inv}_c$ is the extension of the inverse of $\Lambda^{l}_c$ defined at the end of section \ref{section2}.
\item If $x\in P\cap C_n\setminus E$, then
the partial derivative $\displaystyle \frac{\partial c}{\partial x}(x,T(x))$ exists  (i.e. $(x,T(x))\in \mathcal D(\Lambda_c^l)$ ), and
$$
\frac{\partial c}{\partial x}(x,T(x))=-d_x\varphi_n.
$$
\end{itemize}
In particular, if $x\in  P\cap C_n\setminus E$, we have
$$
(x,T(x))\in \mathcal D(\Lambda_c^l)\quad
\text{and}\quad
\Lambda^l_c(x,T(x))= (x-d_x\varphi_n).
$$
Therefore, thanks to the twist condition, the map $T$ is uniquely defined on $P\setminus E \subset C$.}
\end{theorem}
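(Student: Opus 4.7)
The key observation is that the $c$-subsolution inequality rewrites as $h_y(x) := \psi(y) - c(x,y) \le \varphi(x)$ for all $(x,y)$, with equality exactly on $G_{(\varphi,\psi)}$. By hypothesis (i), each $h_y$ (for $y$ with $\psi(y)\in\R$) is locally semi-convex, with modulus locally uniform in $y$; so on $P$, $\varphi$ is pointwise tangent from above to a locally semi-convex function, namely $h_y$ for any witness $y$. The strategy is to approximate $\varphi$ on $P$ by the increasing sequence of locally semi-convex functions
$$
\varphi_n(x) := \max_{1 \le k \le n} h_{y_k}(x)
$$
for a suitably chosen countable family $(y_k) \subset N_0 := \{y \in N : \psi(y) \in \R\}$, and then invoke the appendix fact that a locally semi-convex function on an $n$-manifold is differentiable outside a countably $(n{-}1)$-Lipschitz set.

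\textbf{Construction of $(y_k)$ and $\varphi_n$.} By Lemma~\ref{lemma meas}, $\mathcal D(\Lambda_c^l)$ is Borel; combined with a measurable selection theorem, this yields a Borel map $\sigma \colon P \to N$ with $(x, \sigma(x)) \in G_{(\varphi,\psi)}$. Using the separability of $N$ and a Borel stratification of $P$ by the local semi-convexity modulus of $h_{\sigma(x)}$, one extracts a countable family $(y_k) \subset N_0$ together with a Borel countably $(n{-}1)$-Lipschitz set $E_0 \subset M$ such that, for every $x \in P \setminus E_0$, some $y_k$ is a witness of $x$. Each $\varphi_n$ is then Borel, locally semi-convex as a finite max of locally semi-convex functions, increasing in $n$, and bounded above by $\varphi$.

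\textbf{The sets $C_n$, $E$ and the map $T$.} The non-differentiability set $D_n$ of $\varphi_n$ is Borel countably $(n{-}1)$-Lipschitz by the appendix differentiability result; set $D := \bigcup_n D_n$, $E := D \cup E_0$ (again Borel countably $(n{-}1)$-Lipschitz), and
$$
C_n := \bigl\{x \in M \setminus D : \varphi_m(x) = \varphi_n(x) \ \text{for every } m \ge n\bigr\}.
$$
These are increasing Borel sets on which $d_x\varphi_n = d_x\varphi_{n+1}$; by construction $P \setminus E \subset \bigcup_n C_n$. Define $T(x) := \pi_N\bigl(\Lambda_c^{l,inv}(x, -d_x\varphi_n)\bigr)$ for $x \in C_n$ (well-defined since the derivative stabilizes), extended Borel-measurably elsewhere. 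For $x \in P \cap C_n \setminus E$ with $n$ large enough to include the selected witness $y_k$, one has $h_{y_k}(x) = \varphi(x) = \varphi_n(x)$ and $h_{y_k} \le \varphi_n$ (since $y_k \in \{y_1,\dots,y_n\}$), so $\varphi_n - h_{y_k} \ge 0$ attains $0$ at $x$; differentiability of $\varphi_n$ at $x$ forces differentiability of the semi-convex $h_{y_k}$ there, with $d_x h_{y_k} = d_x\varphi_n$, equivalently $\frac{\partial c}{\partial x}(x, y_k) = -d_x\varphi_n$, and the left twist condition then gives $y_k = T(x)$. The same sub-gradient argument applied to an arbitrary witness $y$ of $x$, together with the fact that two distinct witnesses would provide two distinct Fréchet sub-gradients of $\varphi$ at $x$ (incompatible with the uniqueness $\{d_x\varphi_n\}$ coming from $\varphi_n$ at points of $C_n$), yields $y = T(x)$, so $G_{(\varphi,\psi)} \subset \graph(T) \cup \pi_M^{-1}(E)$.

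\textbf{Main obstacle.} The most delicate step is the selection of $(y_k)$ together with the exceptional set $E_0$, so that every $x \in P \setminus E_0$ actually has one of the $y_k$ as a witness. Since $c$ is not assumed continuous in $y$, density in $N$ alone is insufficient; one must combine the local uniformity of the semi-convexity modulus (hypothesis (i)), Lemma~\ref{lemma meas}, a Borel measurable selection of witnesses, and a Borel stratification of $P$ by the effective modulus of $h_{\sigma(x)}$, with the countably $(n{-}1)$-Lipschitz locus where sub-gradients fail to be unique absorbed into $E_0$.
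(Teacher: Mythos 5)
There is a genuine gap at the central step of your construction. You define $\varphi_n(x)=\max_{1\le k\le n}h_{y_k}(x)$ for a countable \emph{fixed} family $(y_k)$ and then require that, after removing a countably $(n{-}1)$-Lipschitz set $E_0$, every remaining $x\in P$ has one of the $y_k$ as an \emph{exact} witness, i.e.\ $\psi(y_k)-c(x,y_k)=\varphi(x)$. You flag this as the main obstacle, but the sketch you offer (measurable selection of witnesses plus a ``Borel stratification by the effective modulus of $h_{\sigma(x)}$'') does not produce such a family. A measurable selection $\sigma$ will in general take uncountably many values (by the twist condition, the witness above each $x$ is typically unique, and the map $x\mapsto\sigma(x)$ can be injective), and stratifying by modulus controls the regularity of $h_{\sigma(x)}$, not whether a common countable set of exact witnesses exists. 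The set of $x$ with no witness among the $y_k$ has no reason to be $(n{-}1)$-Lipschitz — it could be all of $P$. Without exact equality $\varphi_n(x)=\varphi(x)=h_{y_k}(x)$ at some $y_k$, the squeeze argument that forces $d_x h_{y_k}$ to exist breaks down, and the rest of the proof cannot start.

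The paper avoids this by a genuinely different construction of $\varphi_n$: it sets $\varphi_n(x):=\sup_{y\in V_n}\bigl(\psi(y)-c(x,y)\bigr)$, where $V_n\subset N$ is a Borel subset obtained by intersecting the sublevel set $\{\psi\le n\}$ with a finite union of the open neighborhoods $V_{y_k}$ on which hypothesis (i) gives locally uniform semi-concavity. Because $\varphi_n$ is a supremum over the whole Borel set $V_n$ rather than a maximum over a finite list of points, the crucial identity $\varphi_n|_{P_n}=\varphi|_{P_n}$ (where $P_n=\pi_M(G\cap(M\times V_n))$) holds automatically: as soon as $x$ has any witness lying in $V_n$, whichever it is, the sup catches it. Local semi-convexity of this sup then follows from Proposition \ref{InfSemiConcave} together with the locally uniform semi-concavity from (i). That is the idea your argument is missing. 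Once one has this $\varphi_n$, the remaining steps you describe (the stabilization sets $C_n$, the tangency argument between $\varphi_n$ and the semi-convex $h_{y_x}$ at points of differentiability, and the identification of $T$ via the twist condition and $\Lambda_c^{l,\mathrm{inv}}$) go through essentially as you wrote them and match the paper.
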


The existence and uniqueness of a transport map is then a simple consequence of the above theorem.
\begin{theorem}
\label{mainthm}{\sl Let $M$ be a smooth (second countable) manifold, let $N$ be a Polish space,
and consider $\mu$ and $\nu$ (Borel) probability measures on $M$ and $N$ respectively.
Assume that the cost $c:M\times N\to\R$ is lower semicontinuous and bounded from below.
Assume moreover that the following conditions hold:

\begin{enumerate}
\item[{\rm (i)}]
the family of maps $x \mapsto c(x,y)=c_y(x)$ is locally semi-concave in $x$ locally uniformly in $y$,
\item[{\rm (ii)}]
the cost $c$ satisfies the left twist condition,
\item[{\rm (iii)}]
the measure $\mu$ gives zero mass to countably $(n-1)$-Lipschitz sets,
\item[{\rm (iv)}]
the infimum in the Kantorovitch problem (\ref{kantprob}) is finite.
\end{enumerate}
Then there exists a Borel map $T:M\to N$, which is an optimal transport map from $\mu$ to $\nu$ for the cost $c$. Morover, the map $T$ is unique $\mu$-a.e., and  any plan $\gamma_c\in\Pi(\mu,\nu)$ optimal for the cost $c$ is concentrated on the graph of $T$.

More precisely, if $(\varphi,\psi)$ is a $(c,\g_c)$-calibrating pair,
with the notation of Theorem \ref{keythm},
there exists an increasing sequence of Borel subsets $B_n$, with $\mu(\cup_n B_n)=1$,
such that the map $T$ is uniquely defined on $B =\cup_n B_n$ via
$$
\frac{\partial c}{\partial x}(x,T(x))=-d_x\varphi_n \quad \text{on }B_n,
$$
and any optimal plan $\gamma\in\Pi(\mu,\nu)$ is concentrated on the graph of that map $T$.}
\end{theorem}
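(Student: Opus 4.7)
The plan is to reduce Theorem \ref{mainthm} to Theorem \ref{keythm} by extracting a calibrating subsolution from the duality formula, and then exploiting hypothesis (iii) to discard the exceptional countably $(n-1)$-Lipschitz set produced by Theorem \ref{keythm}.

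First, since $M$ and $N$ are Polish, $c$ is lower semicontinuous and bounded from below, and the infimum in (\ref{kantprob}) is finite, a standard tightness plus weak compactness argument yields the existence of at least one optimal plan $\gamma_c \in \Pi(\mu,\nu)$. By Theorem \ref{thmduality} applied to $\gamma_c$, there exists a Borel $(c,\gamma_c)$-calibrated subsolution $(\varphi,\psi)$. Thus $\psi(y)-\varphi(x)=c(x,y)$ for $\gamma_c$-a.e. $(x,y)$, i.e. $\gamma_c$ is concentrated on the set $G_{(\varphi,\psi)}$ defined in Theorem \ref{keythm}.

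Next, I would apply Theorem \ref{keythm} to this pair $(\varphi,\psi)$; hypotheses (i) and (ii) of that theorem are exactly (i) and (ii) here. This provides a Borel map $T:M\to N$, Borel subsets $C_n\nearrow C$, Borel semi-convex functions $\varphi_n$, and a Borel countably $(n-1)$-Lipschitz set $E\subset M$ with $P\setminus E\subset C$ and $G_{(\varphi,\psi)}\subset \graph(T)\cup\pi_M^{-1}(E)$. By hypothesis (iii), $\mu(E)=0$; since the first marginal of $\gamma_c$ is $\mu$, one has $\gamma_c(\pi_M^{-1}(E))=\mu(E)=0$, so $\gamma_c$ is concentrated on $\graph(T)$. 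This means $\gamma_c=(\operatorname{Id}_M\tilde\times T)_\sharp\mu$; testing against bounded Borel functions on $N$ yields $T_\sharp\mu=\nu$, and
\[
\int_M c(x,T(x))\,d\mu(x)=\int_{M\times N}c\,d\gamma_c=C(\mu,\nu),
\]
so $T$ is an optimal transport map.

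For the uniqueness and for the fact that every optimal plan is concentrated on $\graph(T)$, I would argue as follows. If $\gamma\in\Pi(\mu,\nu)$ is any other optimal plan, then $\int c\,d\gamma=\int c\,d\gamma_c$; combined with the subsolution inequality $\int\psi\,d\nu-\int\varphi\,d\mu\le\int c\,d\gamma$ and the equality already achieved by $\gamma_c$, we deduce $\psi(y)-\varphi(x)=c(x,y)$ for $\gamma$-a.e. $(x,y)$ as well, i.e. $\gamma$ is also concentrated on $G_{(\varphi,\psi)}$. The same elimination of $\pi_M^{-1}(E)$ using (iii) shows $\gamma$ is concentrated on $\graph(T)$, which gives both uniqueness $\mu$-a.e. of the transport map and the last sentence of the statement. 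Finally, for the "more precisely" part, since $\mu$-almost every $x$ lies in $\pi_M(G_{(\varphi,\psi)})=P$ (by the marginal condition on $\gamma_c$), and $P\setminus E\subset C=\bigcup_n C_n$ with $\mu(E)=0$, I would set $B_n:=C_n\setminus E$ (intersected with a full $\mu$-measure Borel subset of $P$); then $\mu(\bigcup_n B_n)=1$, and on $B_n$ Theorem \ref{keythm} gives precisely $\partial c/\partial x(x,T(x))=-d_x\varphi_n$.

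The only delicate point I anticipate is the integrability needed to make the dual equality $\int\psi\,d\nu-\int\varphi\,d\mu=\int c\,d\gamma_c$ rigorous: a priori the calibrating pair produced by Theorem \ref{thmduality} need not be $\mu$- and $\nu$-integrable separately, so one must either invoke the version of the duality in \cite{villani2} (where this is built into the statement) or use a truncation/approximation of $(\varphi,\psi)$ together with the fact that the cost is bounded below and (iv) holds. Once this is handled, every step above is a direct consequence of Theorem \ref{keythm} and the duality.
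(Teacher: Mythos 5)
Your reduction to Theorem \ref{keythm} via Theorem \ref{thmduality} and the use of hypothesis (iii) to discard the exceptional set $E$ is exactly the paper's approach for the existence part, and is correct. Where you genuinely diverge is the uniqueness step and the claim that \emph{every} optimal plan is concentrated on the graph of the same map $T$.

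You argue that every optimal $\gamma$ is calibrated by the same pair $(\varphi,\psi)$, by writing $\int\psi\,d\nu-\int\varphi\,d\mu\le\int c\,d\gamma=\int c\,d\gamma_c$ and forcing equality. As you yourself flag, this chain requires $\varphi\in L^1(\mu)$ and $\psi\in L^1(\nu)$, which Theorem \ref{thmduality} as stated in the paper does not provide --- it only guarantees a Borel calibrating pair for one particular optimal plan, with no integrability. Patching this by truncation or by importing the stronger statement from \cite{villani2} is doable, but it is a real gap as written. The paper sidesteps the issue entirely with a more elementary device: if $S$ and $T$ are both optimal transport maps, then $\bar\gamma=\tfrac12\bigl((\Id_M\tilde\times T)_\sharp\mu+(\Id_M\tilde\times S)_\sharp\mu\bigr)$ is again an optimal plan; running the existence argument on $\bar\gamma$ (with a \emph{fresh} calibrating pair from Theorem \ref{thmduality}) shows $\bar\gamma$ is concentrated on a graph, which forces $S=T$ $\mu$-a.e. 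Since any optimal plan is concentrated on some graph by the same argument, it is then concentrated on the graph of $T$. This averaging trick needs no integrability of the potentials, only the duality theorem in the form actually stated, so it is the more robust route; your approach is morally equivalent but leans on a stronger form of duality than the paper grants itself.

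Two smaller points. First, for the Borel sets $B_n$ you should take $B_n=P\cap C_n\setminus E$ (not $C_n\setminus E$): the formula $\partial c/\partial x(x,T(x))=-d_x\varphi_n$ is only guaranteed by Theorem \ref{keythm} for $x\in P\cap C_n\setminus E$, since off $P$ the theorem gives only the $\Lambda_c^{l,\mathrm{inv}}$ representation, not the existence of the partial derivative. Second, $P=\pi_M(G_{(\varphi,\psi)})$ is a priori only analytic, not Borel; the phrase ``intersected with a full $\mu$-measure Borel subset of $P$'' glosses over how one knows such a subset is Borel. The paper resolves this by observing that $B=P\setminus E=(\Id_M\tilde\times T)^{-1}\bigl(G_{(\varphi,\psi)}\setminus\pi_M^{-1}(E)\bigr)$, a Borel preimage of a Borel set under the Borel map $x\mapsto(x,T(x))$, hence Borel; then $B_n=B\cap C_n$ is Borel. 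You should include this identification to make the ``more precisely'' part airtight.
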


We remark that condition (iv) is trivially satisfied if
$$
\int_{M \times N} c(x,y) \,d\mu(x) \,d\nu(y) <+\infty.
$$
However we needed to stated the above theorem in this more general form in order to apply it in section \ref{sectinterp}
(see Remark \ref{rmkintegrability}).

\begin{proof}[Proof of Theorem \ref{mainthm}]
Let $\gamma_c\in\Pi(\mu,\nu)$ be an optimal plan. By Theorem \ref{thmduality} there exists a
$(c,\gamma)$-calibrated pair $(\varphi,\psi)$.
Consider the set
$$
G =G_{(\varphi,\psi)}=\{(x,y) \in M \times N \mid \psi(y)-\varphi(x)=c(x,y) \}.
$$
Since both $M$ and $N$ are Polish and both maps $\varphi$ and $\psi$ are Borel, the subset
$G$ is a Borel subset of $M \times N$.
Observe that, by the definition of $(c,\gamma_c)$-calibrated pair, we have $\gamma_c(G)=1$.

By Theorem \ref{keythm} there exists a Borel countably $(n-1)$-Lipschitz set $E$ such that
$G \setminus (\pi_M)^{-1}(E)$ is contained in the graph of a Borel map $T$.
This implies that
$$
B =\pi_M \left(G \setminus (\pi_M)^{-1}(E)\right)=\pi_M(G)\setminus E\subset M
$$
is a Borel set, since it coincides with $(\Id_M\tilde \times T)^{-1}\left(G \setminus (\pi_M)^{-1}(E) \right)$ and
the map $x\mapsto\Id_M\tilde \times T(x)=(x,T(x))$ is Borel measurable.

Thus, recalling that the first marginal of $\g_c$ is $\mu$, by assumption (iii) we get $\gamma_c((\pi_M)^{-1}(E))=\mu(E)=0$.
Therefore $\gamma_c(G \setminus (\pi_M)^{-1}(E))=1$, so that $\g_c$ is concentrated on the graph of $T$,
which gives the existence of an optimal transport map.
Note now that $\mu(B)=\gamma_c(\pi^{-1}(B))\geq\gamma_c(G \setminus (\pi_M)^{-1}(E))=1$. Therefore $\mu(B)=1$. Since $B=P\setminus E$, where $P=\pi_M(G)$, using the Borel set $C_n$ provided by Theorem \ref{keythm}, it follows
that $B_n=P\cap C_n\setminus E=D\cap C_n$ is a Borel set with $B=\cup_n B_n$.
The end of Theorem \ref{keythm} shows that $T$ is indeed uniquely defined on $B$ as said in the statement.

Let us now prove the uniqueness of the transport map $\mu$-a.e..
If $S$ is another optimal transport map, consider the measures $\g_T =(\operatorname{Id}_M \times T)_\# \mu$
and $\g_S =(\operatorname{Id}_M \times S)_\# \mu$. The measure
$\bar \g = \frac{1}{2}(\g_T + \g_S)\in\Pi(\mu,\nu)$ is still an optimal plan, and therefore must be concentrated on a graph.
This implies for instance that $S=T$ $\mu$-a.e., and thus $T$ is the unique optimal transport map.
Finally, since any optimal $\g \in \Pi(\mu,\nu)$ is concentrated on a graph,
we also deduce that any optimal plan is concentrated on the graph of $T$.
\end{proof}

\begin{proof}[Proof of Theorem \ref{keythm}]
By definition of $c$-subsolution, we have $\varphi>-\infty$ everywhere on $M$, and $\psi<+\infty$ everywhere on $N$.
Therefore, if we define $W_n =\{ \psi \leq n\}$, we have $W_n\subset W_{n+1}$, and $\cup_n W_n=N$.
Since, by hypothesis {(i)}, $c(x,y)=c_y(x)$ is locally semi-concave in $x$ locally uniformly in $y$, for each $y \in N$
there exist a neighborhood $V_y$ of $y$ such that the family of functions $(c(\cdot,z))_{z \in V_y}$ is locally
uniformly semi-concave. Since $N$ is separable, there exists a countable family of points $(y_k)_{k \in \N}$
such that $\cup_k V_{y_k}=N$.
We now consider the sequence of subsets $(V_n)_{n \in \N} \subset N$ defined as
$$
V_n =W_n \cap \left(\cup_{1 \leq k \leq n} V_{y_k} \right).
$$
We have $V_n\subset V_{n+1}$.
Define $\varphi_n:M\to N$ by
$$
\varphi_n(x) =\sup_{y \in V_n} \psi(y) - c(x,y)
=\max_{1 \leq k \leq n} \Bigl( \sup_{y \in W_n \cap V_{y_k}} \psi(y) - c(x,y) \Bigr).
$$
Since $\psi \leq n$ on $K_n$, and $-c$ is bounded from above, we see that $\varphi_n$ is bounded from above.
Therefore, by hypothesis {(i)}, the family of functions  $(\psi(y)-c(\cdot,y))_{y\in W_n \cap V_{y_k}}$ is locally
uniformly semi-convex and bounded from above. Thus, by Theorem \ref{EasyPropSemiConcave} and Proposition \ref{InfSemiConcave}
of the Appendix,
the function $\varphi_n$ is locally semi-convex. Since $\psi(y)-\varphi(x)\leq c(x,y)$ with equality on
$G_{(\varphi,\psi)}$, and $V_n\subset V_{n+1}$, we clearly have
$$
\varphi_n \leq \varphi_{n+1}\leq\varphi \quad \text{everywhere on $M$}.
$$
A key observation is now the following:
$$
\varphi|_{_{P_n}}={\varphi_n}|_{_{P_n}},
$$
where $P_n=\pi_M(G_{(\varphi,\psi)} \cap (M\times V_n))$. In fact, if $x \in P_n$, by the definition of $P_n$ we know that there exists a point
$y_x \in V_n$ such that $(x,y_x) \in G_{(\varphi,\psi)}$. By the definition of $G_{(\varphi,\psi)}$, this implies
$$
\varphi(x)=\psi(y_x)-c(x,y_x) \leq \varphi_n(x) \leq \varphi(x).
$$
Since $\varphi_n$ is locally semi-convex, by Theorem \ref{propdiffsemiconc} of the appendix applied to $-\varphi_n$,
it is differentiable on a Borel subset $F_n$ such that its complement $F^c_n$ is a Borel countably $(n-1)$-Lipschitz set.
Let us then define $F =\cap_n F_n$.
The  complement $E =F^c=\cup_n F_n^c$ is also a Borel countably $(n-1)$-Lipschitz set.
We now define the Borel set
$$
C_n =F \cap \{x \in M \mid \varphi_k(x)=\varphi_n(x) \ \ \forall k \geq n \}.
$$
We observe that $C_n \supset P_n \cap F$.
\par
We now prove that
$G_{(\varphi,\psi)} \cap \left((P_n\cap F) \times V_n \right)$ is contained in a graph.\par
To prove this assertion, fix $x \in P_n \cap F$. By the definition of $P_n$, and what we said above,
there exists $y_x \in V_n$ such that
$$
\varphi(x)=\varphi_n(x)=\psi(y_x)-c(x,y_x).
$$
Since $x \in F$, the map
$
z \mapsto \varphi_n(z) - \psi(y_x)
$
is differentiable at $x$. Moreover, by condition (i), the map
$
z \mapsto -c(z,y_x)=-c_{y_x}(z)$ is locally semi-convex
and, by the definition of $\varphi_n$, for every  $z \in M$, we have
$\varphi_n(z)-\psi(y_x)\geq -c(z,y_x)$, with equality at $z=x$.
These facts taken together imply that $\displaystyle\frac{\partial c}{\partial x}(x,y_x)$ exists and is equal to $-d_x\varphi_n$.
In fact, working in a chart around $x$, since $c_{y_x}=c(\cdot,y_x)$ is locally semi-concave, by the definition
\ref{defsemicon} of a locally semi-concave function, there exists linear map $l_x$ such that
$$
c(z,y_x) \leq c(x,y_x) + l_x(z-x) + o(|z-x|),
$$
for $z$ in a neighborhood of $x$. Using also that $\varphi_n$ is differentiable at  $x$, we get
\begin{align*}
\varphi_n(x)-\psi(y_x) &+ d_x\varphi_n(z-x) + o(|z-x|)=
\varphi_n(z)-\psi(y_x)\\
& \geq -c(z,y_x)
\geq -c(x,y_x) - l_c(z-x) + o(|z-x|)\\
&=\varphi_n(x)-\psi(y_x) - l_c(z-x)+ o(|z-x|).
\end{align*}
This implies that $l_c=-d_x\varphi_n$, and that $c_{y_x}$ is differentiable at $x$ with differential at $x$ equal to $-d_x\varphi_n$.
Setting now $G_x = \{ y \in N \mid \varphi(x)-\psi(y)=c(x,y)\}$, we have just shown that
$\{x\} \times (G_x \cap V_n ) \subset \mathcal D(\Lambda_c^l)$ for each $x \in C_n$,
and also $\displaystyle\frac{\partial c}{\partial x}(x,y)=-d_x\varphi_n$, for every $y\in G_x \cap V_n$.
Recalling now that, by hypothesis (ii), the cost $c$ satisfies the left twist condition,
we obtain that $G_x\cap V_n $ is reduced to a single element
which is uniquely characterized by the equality
$$
\frac{\partial c}{\partial x}(x,y_x)=-d_x\varphi_n.
$$
So we have proved that $G \cap (M \times V_n)$ is the graph over $P_n \cap F$ of the map $T$ defined uniquely,
thanks to the left twist condition, by
$$
\frac{\partial c}{\partial x}(x,T(x))=-d_x\varphi_n
$$
(observe that, since $\varphi_n \leq \varphi_k$ for $k \geq n$ with equality on $P_n$,
we have $d_x\varphi_n|_{P_n}=d_x\varphi_k|_{P_n}$ for $k \geq n$).
Since $P_{n+1}\supset P_n$, and $V_n\subset V_{n+1} \nearrow N$, we can conclude that
$G_{(\varphi,\psi)}$ is a graph over $\cup_n P_n \cap F=P \cap F$ (where $P=\pi_M(G_{(\varphi,\psi)})=\cup_n P_n$).

Observe that, at the moment, we do not know that $T$ is a Borel map, since $P_n$ is not a priori Borel.
Note first that by definition of $B_n\subset B_{n+1}$, we have $\varphi_n=\varphi_{n+1}$ on $B_n$,
and they are both differentiable at every point of $B_n$. Since
$\varphi_n\leq \varphi_{n+1}$ everywhere, by the same argument as above we get
$d_x\varphi_n=d_x \varphi_{n+1}$  for $x\in B_n$. Thus, setting $B =\cup_n B_n$, we can extend $T$ to $M$ by
$$
T(x) =\left\{
\begin{array}{ll}
\pi_N \Lambda_c^{l,inv}(x,-d_x\varphi_n) & \text{on }B_n,\\
\bar y & \text{on }M \setminus B,
\end{array}
\right.
$$
where $\pi_N:M \times N \to N$ is the canonical projection,
$ \Lambda_c^{l,inv}$ is the Borel extension of $(\Lambda_c^l)^{-1}$
defined after Lemma \ref{lemma meas},
and $\bar y$ is an arbitrary but fixed point in $N$. Obviously, the map $T$ thus defined
is Borel measurable and extends the map $T$ already defined on $P\setminus E$.
\end{proof}

In the case where $\mu$ is absolutely continuous with respect to Lebesgue measure we can give a complement to our main theorem. In order to state it, we need the following definition, see \cite[Definition 5.5.1, page 129]{amgisa}:
\begin{definition}[\bf Approximate differential]\label{approxdiff}{\rm
We say that $f :M \rightarrow \R$ has an \textit{approximate differential} at $x \in M$
if there exists a function $h:M \rightarrow \R$
differentiable at $x$ such that the set $\{f = h\}$ has density $1$ at $x$ with respect to the Lebesgue measure
(this just means that the density is $1$ in charts).
In this case, the approximate value of $f$ at $x$ is defined as $\tilde f(x)=h(x)$, and the approximate differential of $f$ at $x$ is defined as $\tilde d_xf=d_xh$.
It is not difficult to show that this definition makes sense. In fact, both $h(x)$, and $d_xh$ do not depend on the choice of $h$, provided $x$ is a density point of the set $\{f = h\}$.}
\end{definition}
Another characterization of the approximate value $\tilde f(x)$, and  of the approximate differential $\tilde d_xf$ is given, in charts, saying that the sets
$$
\left\{ y \mid \frac{|f(y)-\tilde f(x) -\tilde d_xf(y-x)|}{|y-x|} > \varepsilon \right\}
$$
have density $0$ at $x$ for each $\varepsilon >0$ with respect to Lebesgue measure. This last definition
is the one systematically used in \cite{federer}. On the other hand, for the purpose of this paper, Definition \ref{approxdiff}
is more convenient.

The set points $x\in M$ where the approximate derivative $\tilde d_xf$ exists is measurable;
moreover, the map $x\mapsto \tilde d_xf$ is also measurable, see \cite[Theorem 3.1.4, page 214]{federer}.

\begin{ajout}\label{compmainthm}{\sl Under the hypothesis of Theorem \ref{mainthm}, if we assume that $\mu$ is absolutely continuous with respect to Lebesgue measure (this is stronger than condition (iii) of Theorem \ref{mainthm}), then for any calibrated pair $(\varphi,\psi)$, the function $\varphi$ is approximatively differentiable $\mu$-a.e., and the optimal transport map $T$ is uniquely determined $\mu$-a.e., thanks to the twist condition, by
$$
\frac{\partial c}{\partial x}(x,T(x))=-\tilde d_x\varphi,
$$
where $\tilde d_x\varphi$ is the approximate differential of $\varphi$ at $x$.
Moreover, there exists a Borel subset $A\subset M$ of full $\mu$ measure
such that $\tilde d_x\varphi$ exists on $A$,
the map $x\mapsto \tilde d_x\varphi$ is Borel measurable on $A$,
and $\displaystyle \frac{\partial c}{\partial x}(x,T(x))$ exists for
$x\in A$ (i.e. $(x,T(x))\in \mathcal D(\Lambda_c^l)$).}
\end{ajout}
\begin{proof} We will use the notations and the proof of Theorems \ref{keythm} and \ref{mainthm}.
We denote by $\tilde A_n\subset B_n$ the set of $x\in B_n$ which are density points for $B_n$ with respect to some measure $\lambda$ whose measure class in charts is that of Lebesgue (for example one can take $\lambda$ as the Riemannian measure associated to a Riemannian metric). By Lebesgue's Density Theorem $\lambda(B_n\setminus\tilde A_n)=0$. Since $\mu$ is absolutely continuous with respect to Lebesgue measure, we have
$\mu(\tilde A_n)=\mu(B_n)$, and therefore $\tilde A=\cup_n \tilde A_n$ is of full $\mu$-measure,
since $\mu(B_n)\nearrow \mu(B)= 1$. Moreover, since $\varphi=\varphi_n$ on $B_n$, and $\varphi_n$ is differentiable at each point of $B_n$, the function $\varphi$ is approximatively differentiable at each point of $\tilde A_n$ with $\tilde d_x\varphi=d_x\varphi_n$.

The last part of this complement on measurability follows of course from \cite[Theorem 3.1.4, page 214]{federer}. But in this case, we can give a direct simple proof. We choose
$A_n\subset \tilde A_n$ Borel measurable with $\mu(\tilde A_n\setminus A_n)=0$. We set $A=\cup_nA_n$. The set $A$ is of full $\mu$ measure. Moreover,
for every $x\in A_n$, the approximate differential $\tilde d_x\varphi$ exists and is equal to $d_x\varphi_n$.
Thus it suffices to show that the map $x\mapsto d_x\varphi_n$ is Borel measurable,
and this follows as in Lemma \ref{lemma meas}.
\end{proof}

\begin{remark}
\label{rmkreferee}{\rm We observe that the key steps in the
results of this section, translated in a bit different language,
can be summarized in the following way. Let $\varphi:M \to \R \cup
\{+\infty\}$ be a $c$-convex function, that is
$$
\varphi(x)=\sup_{y \in N} \psi(y)-c(x,y)
$$
for a certain $\psi:N \to \R \cup \{-\i\}$, and assume $\varphi
\not\equiv +\i$. Then, if $c$ satisfies hypothesis (i) of Theorem
\ref{keythm}, we can prove the following:
\begin{enumerate}
\item[(1)] There exists a non-decreasing sequence $\varphi_n$ of
locally semi-convex functions such that
$\cup_{n}\{\varphi=\varphi_n\}=M$.
\end{enumerate}
From this we deduce:
\begin{enumerate}
\item[(2a)] There exists a Borel section $P(x)$ of the cotangent
bundle $T^*M$ and a $(n-1)$-Lipschitz set $E \subset M$ such that,
if $x \not\in E$, $P(x)$ is the only possible sub-differential of
$\varphi$ at $x$.

\item[(2b)] The function $\varphi$ is approximately differentiable
a.e. with respect to the Lebesgue measure.
\end{enumerate}
By these facts, the duality theorem and the semi-concavity of the
cost, we can deduce the existence and uniqueness of an optimal
transport (under the general assumptions of Theorem \ref{mainthm}
we use (2a), while under the assumption that the source measure is
absolutely continuous with respect to the Lebesgue measure we use
(2b)).
 }
\end{remark}

\section{Costs obtained from Lagrangians}\label{sectLagrangian}
Now that we have proved Theorem \ref{mainthm}, we want to observe that the hypotheses are satisfied by a large class of cost functions.

We will consider first the case of a Tonelli Lagrangian $L$ on a connected manifold (see Definition \ref{TonelliLagrangian} of the appendix for the definition of a Tonelli Lagrangian). For $t>0$, the cost $c_{t,L}:M\times M\to \R$ associated to $L$ is given by
$$
c_{t,L}(x,y)=\inf_{\gamma} \int_0^t L(\gamma(s),\dot \gamma(s)) \,ds,
$$
where the infimum is taken over all the continuous piecewise C$^1$ curves
$\gamma:[0,t]\to M$, with $\gamma(0)=x$, and $\gamma(t)=y$ (see
Definition \ref{CostLagrangian} of the appendix).
\begin{proposition}\label{C2LagAreOK}{ \sl  If $L:TM \rightarrow \R$ is a Tonelli Lagrangian on the connected manifold $M$, then, for $t>0$, the cost $c_{t,L}:M\times M\to \R$ associated to the Lagrangian $L$ is continuous, bounded from below, and satisfies conditions
{\rm (i)} and {\rm (ii)} of Theorem \ref{mainthm}.}
\end{proposition}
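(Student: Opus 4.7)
The plan is to verify the four required properties of $c_{t,L}$ one by one, drawing on the standard calculus-of-variations facts about Tonelli Lagrangians that the appendix collects (existence of minimizing extremals, $C^2$ regularity of minimizers as solutions of the Euler--Lagrange equation, uniform bounds on initial velocities of minimizers whose endpoints stay in a compact set, and the first variation formula).

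First I would dispose of continuity and the lower bound. By the standing definition of a Tonelli Lagrangian one has $L\geq -C$ for some constant $C$ (if not built into the definition it follows from convexity and superlinearity together with the finiteness of $L$), so immediately $c_{t,L}(x,y)\geq -Ct$. For continuity, fix $(x_0,y_0)$, choose a Tonelli minimizer $\gamma:[0,t]\to M$ from $x_0$ to $y_0$, and for nearby $(x,y)$ build a competitor by splicing short connecting segments (e.g. local geodesic-like curves) at each end of $\gamma$; superlinearity controls the cost of the short pieces and gives $\limsup c_{t,L}(x,y)\leq c_{t,L}(x_0,y_0)$, while the reverse inequality comes from Tonelli's theorem applied along a converging sequence of minimizers.

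For condition (i) I would prove the local semi-concavity in $x$ uniformly for $y$ in a compact set $K\subset M$ by the classical splicing argument. Work in a chart around a fixed $x_0$. Given a Tonelli minimizer $\gamma:[0,t]\to M$ from $x_0$ to $y$, pick a small fixed $s_0>0$ and, for $x'$ near $x_0$, define a competitor $\tilde\gamma$ that runs on $[0,s_0]$ along the affine segment (in the chart) from $x'$ to $\gamma(s_0)$ and coincides with $\gamma$ on $[s_0,t]$. A Taylor expansion of the action of $\tilde\gamma$ on $[0,s_0]$ in the displacement $x'-x_0$ yields
$$c_{t,L}(x',y)\;\leq\;c_{t,L}(x_0,y)+\langle p,\,x'-x_0\rangle + C|x'-x_0|^2,$$
which is exactly the required semi-concavity estimate. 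The crucial point is that the constant $C$ can be chosen uniformly in $y\in K$: this is where one invokes the a priori bound on $\|\dot\gamma(0)\|$ for minimizers whose endpoints stay in a compact set, which is a consequence of superlinearity and which the appendix on Tonelli Lagrangians supplies.

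For condition (ii), the left twist condition, I would rely on the first variation formula. If $\partial c_{t,L}/\partial x$ exists at $(x,y)$ then by comparing minimizers with perturbed initial points one obtains
$$\frac{\partial c_{t,L}}{\partial x}(x,y)=-\frac{\partial L}{\partial v}(x,\dot\gamma(0)),$$
where $\gamma$ is any Tonelli minimizer from $x$ to $y$. Strict convexity and superlinearity of $L$ in $v$ make $v\mapsto \partial L/\partial v(x,v)$ a diffeomorphism $T_xM\to T_x^*M$, so $\dot\gamma(0)$ is uniquely determined by $(x,\partial c_{t,L}/\partial x(x,y))$. The Euler--Lagrange flow $\phi_s^L$ then recovers $y=\pi\circ\phi_t^L(x,\dot\gamma(0))$, proving injectivity of $\Lambda_{c_{t,L}}^l$.

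The main obstacle is the uniformity in $y$ of the semi-concavity constant in condition (i); this rests on the a priori velocity estimate for Tonelli minimizers with endpoints in a compact set, which in turn uses superlinearity in an essential way. Once that bound is available from the appendix, the splicing computation is routine, and the remaining items reduce to standard applications of the Tonelli existence theorem and the Legendre transform.
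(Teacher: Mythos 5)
Your overall strategy matches the paper's, with one difference in organization: the paper first proves \emph{joint} local semi-concavity of $(x,y)\mapsto c_{t,L}(x,y)$ on $M\times M$ (Theorem \ref{costsemiconc}, splicing at both ends of the minimizer), and then obtains condition (i) and continuity essentially for free from a general restriction lemma (Proposition \ref{uniformlocalsemiconcave}). You instead prove the one-sided statement directly and prove continuity by a separate compactness argument. That is a legitimate alternative, and your treatment of the twist condition (first-variation formula $\partial_x c_{t,L}(x,y)=-\partial_v L(x,\dot\gamma(0))$, injectivity of the fiberwise Legendre transform, then recovering $y$ from the Euler--Lagrange flow) is exactly the content of Lemma \ref{lemmaUC}, Lemma \ref{lemmaUC'} and Proposition \ref{TwistForTonelli}.

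However, there is a genuine flaw in the competitor you use for condition (i). You take $\tilde\gamma$ on $[0,s_0]$ to be the affine segment from $x'$ to $\gamma(s_0)$. At $x'=x_0$ this is \emph{not} $\gamma\vert_{[0,s_0]}$ (it is its chord, linearly reparametrized), so $\mathbb A_L(\tilde\gamma)\big\vert_{x'=x_0}\geq c_{t,L}(x_0,y)$ with a possibly strict inequality $\delta>0$. Your Taylor expansion therefore produces $c_{t,L}(x',y)-c_{t,L}(x_0,y)\leq \delta+\langle p, x'-x_0\rangle + C|x'-x_0|^2$, and the additive offset $\delta$ does not vanish as $x'\to x_0$, so this is not a semi-concavity inequality. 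The correct competitor (and the one the paper uses in the proof of Theorem \ref{costsemiconc}) perturbs $\gamma$ additively by a linearly decaying amount, namely $\tilde\gamma(s)=\gamma(s)+\frac{s_0-s}{s_0}(x'-x_0)$ on $[0,s_0]$ in the chart; this coincides with $\gamma$ when $x'=x_0$, kills the offset, and the Taylor expansion of $s\mapsto L\bigl(\tilde\gamma(s),\dot{\tilde\gamma}(s)\bigr)$ in the parameter $h=x'-x_0$ then yields the desired bound with a linear term given by the superdifferential and a remainder controlled by the (local Lipschitz) modulus of $DL$, uniformly in $y$ over a compact set by the a priori speed estimate (Proposition \ref{estimationspeed}). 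With this correction your argument goes through; the remaining items are sound.
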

\begin{proof} Since $L$ is a Tonelli Lagrangian, observe that $L$ is bounded below by $C$, where $C$ is the constant given in condition (c) of Definition
\ref{TonelliLagrangian}. Hence the cost $c_{t,L}$ is  bounded below by $tC$. By Theorem \ref{costsemiconc} of the appendix, the cost $c_{t,L}$ is locally semi-concave, and therefore continuous. Moreover, we can now apply
Proposition \ref{uniformlocalsemiconcave} of the appendix to conclude that
$c_{t,L}$ satisfies condition
{\rm (i)} of Theorem \ref{mainthm}.

The twist condition {\rm (ii)} of Theorem \ref{mainthm} for $c_{t,L}$ follows from Lemma  \ref{lemmaUC'} and Proposition \ref{TwistForTonelli}.
\end{proof}
For costs coming from Tonelli Lagrangians, we subsume the application of the main Theorem \ref{mainthm}, and its Complement \ref{compmainthm}.
\begin{theorem}\label{thLagragiancost}{\sl Let $L$ be a Tonelli Lagrangian on the connected manifold $M$.
Fix $t>0$, $\mu,\nu$ a pair of probability measure on $M$, with $\mu$ giving measure zero to
countably $(n-1)$-Lipschitz sets, and
assume that
the infimum in the Kantorovitch problem (\ref{kantprob}) with cost $c_{t,L}$ is finite.
Then there exists a uniquely $\mu$-almost everywhere defined transport map $T:M\to M$ from $\mu$ to $\nu$
which is optimal for the cost $c_{t,L}$. Moreover, any plan $\gamma \in \Pi(\mu,\nu)$, which is optimal
for the cost $c_{t,L}$, verifies $\gamma(\operatorname{Graph}(T))=1$.

If $\mu$ is absolutely continuous with respect to Lebesgue measure, and $(\varphi,\psi)$ is a $c_{t,L}$-calibrated subsolution for $(\mu,\nu)$, then we can find a Borel set $B$ of full $\mu$ measure, such that the  approximate differential
$\tilde d_x\varphi$  of $\varphi$ at $x$ is defined for $x\in B$, the map $x\mapsto \tilde d_x\varphi$
is Borel measurable on $B$,
and the transport map $T$ is defined on $B$ (hence $\mu$-almost everywhere) by
$$
T(x)=\pi^*\phi^H_t(x,\tilde d_x\varphi),
$$
where $\pi^*:T^*M\to M$ is the canonical projection, and $\phi^H_t$ is the Hamiltonian flow of the Hamiltonian $H$ associated to $L$.

We can also give the following description for $T$ valid on $B$  (hence $\mu$-almost everywhere):
$$T(x)=\pi\phi^L_t(x,\widetilde\grad^L_x(\varphi)),$$
where $\phi^L_t$ is the Euler-Lagrange flow of $L$, and $x\to\widetilde\grad^L_x(\varphi)$
is the measurable vector field on $M$ defined on $B$ by
$$
\frac{\partial L}{\partial v}(x,\widetilde\grad^L_x(\varphi))=\tilde d_x\varphi.
$$
Moreover, for  every $x\in B$, there is a unique $L$-minimizer $\gamma:[0,t]\to M$, with $\gamma(0)=x, \gamma(t)=T(x)$,
and this curve $\gamma$ is given by $\gamma(s)=\pi\phi^L_s(x,\widetilde\grad^L_x(\varphi))$, for $0\leq s\leq t$.}
\end{theorem}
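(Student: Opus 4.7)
The plan is to apply Theorem \ref{mainthm} and its Complement \ref{compmainthm} to the specific cost $c = c_{t,L}$, then translate the identity $\partial c/\partial x(x,T(x)) = -\tilde d_x\varphi$ into the claimed Hamiltonian/Lagrangian-flow descriptions. First I verify the hypotheses: by Proposition \ref{C2LagAreOK}, $c_{t,L}$ is continuous (hence lower semicontinuous), bounded below, locally semi-concave in $x$ locally uniformly in $y$, and satisfies the left twist condition. Since $\mu$ is assumed to give zero mass to countably $(n-1)$-Lipschitz sets and the Kantorovich infimum is finite, Theorem \ref{mainthm} applies directly and yields the existence of a $\mu$-a.e. uniquely defined optimal transport map $T$, together with the statement that every optimal plan concentrates on $\graph(T)$.

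For the second part, suppose $\mu$ is absolutely continuous with respect to Lebesgue measure, and let $(\varphi,\psi)$ be a $c_{t,L}$-calibrated pair. Complement \ref{compmainthm} produces a Borel set $B$ of full $\mu$-measure on which $\tilde d_x\varphi$ exists, the map $x\mapsto \tilde d_x\varphi$ is Borel measurable, the pair $(x,T(x))$ lies in $\mathcal D(\Lambda^l_{c_{t,L}})$, and
\[
\frac{\partial c_{t,L}}{\partial x}(x,T(x)) = -\tilde d_x\varphi.
\]

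The crucial link with the flow is the classical first-variation identity: if $\gamma:[0,t]\to M$ is an $L$-minimizer with $\gamma(0)=x$, $\gamma(t)=y$, and $\partial c_{t,L}/\partial x$ exists at $(x,y)$, then
\[
\frac{\partial c_{t,L}}{\partial x}(x,y) = -\frac{\partial L}{\partial v}(x,\dot\gamma(0)).
\]
This is precisely the content of the twist analysis for Tonelli Lagrangians (Lemma \ref{lemmaUC'} and Proposition \ref{TwistForTonelli} in the appendix). Combined with the displayed equation for $T$, it shows that for every $x\in B$ the initial momentum of any action-minimizer from $x$ to $T(x)$ equals $\tilde d_x\varphi$. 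By the strict convexity and superlinearity of $L$ in $v$ (the Tonelli hypotheses), the fiberwise Legendre transform $v\mapsto \partial L/\partial v(x,v)$ is a diffeomorphism $T_xM \to T^*_xM$, which uniquely determines a Borel velocity field $\widetilde\grad^L_x(\varphi)$ by $\partial L/\partial v(x,\widetilde\grad^L_x(\varphi)) = \tilde d_x\varphi$. The completeness of the Euler-Lagrange flow $\phi^L_s$ for Tonelli Lagrangians (appendix) then makes the curve through $(x,\widetilde\grad^L_x(\varphi))$ unique, so the minimizer is $\gamma(s) = \pi\phi^L_s(x,\widetilde\grad^L_x(\varphi))$, and in particular $T(x) = \pi\phi^L_t(x,\widetilde\grad^L_x(\varphi))$. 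The Hamiltonian version $T(x) = \pi^*\phi^H_t(x,\tilde d_x\varphi)$ follows by conjugating via the global Legendre transform, which intertwines $\phi^L$ and $\phi^H$.

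The main obstacle is the variational identity $\partial c_{t,L}/\partial x(x,y) = -\partial L/\partial v(x,\dot\gamma(0))$: a priori several minimizers can join $x$ to $y$, and one must use the existence of the partial derivative of $c_{t,L}$ at $(x,y)$ to force all of them to share the same initial momentum. This is precisely what the twist-type results of the appendix supply, via the semi-concavity of the cost (Theorem \ref{costsemiconc}) and the Tonelli structure. Once this identification is secured, the remaining geometric steps — inverting the Legendre transform, running the Euler-Lagrange flow, and passing to the Hamiltonian picture — are deterministic consequences of the Tonelli assumptions.
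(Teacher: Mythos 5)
Your proposal is correct and follows essentially the same route as the paper: reduce to Theorem \ref{mainthm} via Proposition \ref{C2LagAreOK}, invoke Complement \ref{compmainthm} to obtain $\partial c_{t,L}/\partial x(x,T(x)) = -\tilde d_x\varphi$ on a full-measure Borel set, identify the initial momentum via Lemma \ref{lemmaUC'} and Proposition \ref{TwistForTonelli}, invert the fiberwise Legendre transform to define $\widetilde\grad^L_x(\varphi)$, and conjugate by the global Legendre transform to pass from the Euler--Lagrange to the Hamiltonian picture. One small caveat: you appeal to ``completeness'' of $\phi^L_s$, but the Euler--Lagrange flow is only a partial flow in general; what actually guarantees that the orbit exists on all of $[0,t]$ is that the minimizer $\gamma$ itself is defined there and, by Theorem \ref{ConnectionWithH}, its speed curve is an orbit of $\phi^L_s$.
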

\begin{proof} The first part is a consequence of Proposition \ref{C2LagAreOK} and Theorem \ref{mainthm}.
When $\mu$ is absolutely continuous with respect to Lebesgue measure, we can apply Complement \ref{compmainthm} to  obtain a Borel subset
$A\subset M$ of full $\mu$ measure such  that, for every $x\in A$, we have $(x,T(x))\in {\cal D}(\Lambda ^l_{c_{t,L}})$ and
$$
\frac{\partial c_{t,L}}{\partial x}(x,T(x))=\tilde d_x\varphi.
$$
By Lemma \ref{lemmaUC'} and Proposition \ref{TwistForTonelli},
if $(x,y)\in {\cal D}(\Lambda ^l_{c_{t,L}})$, then  there is a unique $L$-minimizer $\gamma:[0,t]\to M$, with $\gamma(0)=x, \gamma(t)=y$, and this minimizer is of the form
$\gamma(s)=\pi\phi^L_s(x,v)$, where $\pi:TM\to M$ is the canonical projection, and $v\in T_xM$ is uniquely determined by the equation
$$
\frac{\partial c_{t,L}}{\partial x}(x,y)=-\frac{\partial L}{\partial v}(x,v).
$$
Therefore $T(x)=\pi\phi^L_t(x,\widetilde\grad^L_x(\varphi))$, where $\widetilde\grad^L_x(\varphi)$ is uniquely determined by
$$
\frac{\partial L}{\partial v}(x,\widetilde\grad^L_x(\varphi))=-\frac{\partial c_{t,L}}{\partial x}(x,T(x))=\tilde d_x\varphi,
$$
which is precisely the second description of $T$. The first description of $T$ follows from the second one, once we observe that
\begin{align*}
\Leg(x,\widetilde\grad^L_x(\varphi)))&=(x,\frac{\partial L}{\partial v}(x,\widetilde\grad^L_x(\varphi))= (x,\tilde d_x\varphi)\\
\phi^H_t&=\Leg\circ \phi^L_t\circ \Leg^{-1}\\
\pi^*\circ\Leg&=\pi,
\end{align*}
where $\Leg:TM \to T^*M$ is the global Legendre Transform,see Definition \ref{defGlobLegTransf} of the appendix.
\end{proof}
We now turn to the proof of Theorem \ref{coutRiemannien}, which is not a consequence of Theorem \ref{thLagragiancost} since the cost
$d^r$ with $r>1$ does not come from a Tonelli Lagrangian for $r \neq 2$.

\begin{theorem}\label{coutRiemannien2}{\sl Suppose that the connected manifold $M$ is endowed with a Riemannian metric $g$
which is complete. Denote by $d$ the Riemannian distance. If $r>1$, and $\mu$ and $\nu$ are probability (Borel) measures on
$M$, where $\mu$ gives measure zero to countably $(n-1)$-Lipschitz sets, and
$$
\int_Md^r(x,x_0)\,d\mu(x)<\i \quad \text{and} \quad\int_Md^r(x,x_0)\,d\nu(x)<\i
$$
for some given $x_0\in M$, then we can find a transport map $T:M\to M$, with $T_\sharp\mu=\nu$, which is optimal for the cost $d^r$ on $M\times M$. Moreover, the map $T$ is uniquely determined $\mu$-almost everywhere.

If $\mu$ is absolutely continuous with respect to Lebesgue measure, and $(\varphi,\psi)$ is a calibrated subsolution for the cost $d^r(x,y)$ and the pair of measures $(\mu,\nu)$, then the  approximate differential $\tilde d_x\varphi$  of $\varphi$ at $x$ is defined $\mu$-almost everywhere, and the transport map $T$ is defined $\mu$-almost everywhere by
$$
T(x)=\operatorname{exp}_x(\frac{\widetilde\grad^g_x(\varphi)}{r^{1/(r-1)}\lVert \widetilde\grad^g_x(\varphi)\rVert_x^{(r-2)/(r-1)}}),
$$
where the approximate Riemannian gradient $\widetilde\grad^g(\varphi)$ of $\varphi$ is defined by
$$
g_x( \widetilde\grad^g_x(\varphi),\cdot)=\tilde d_x\varphi,
$$
and $\operatorname{exp}:TM\to M$ is the exponential map of $g$ on $TM$, which is globally defined since $M$ is complete.}
\end{theorem}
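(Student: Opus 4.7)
The plan is to apply Theorem \ref{mainthm} and Complement \ref{compmainthm} directly to the cost $c(x,y)=d^r(x,y)$ on $M\times M$, bypassing the Tonelli Lagrangian framework since $d^r$ does not arise from a Tonelli Lagrangian for $r\neq 2$. The cost $c$ is continuous and non-negative, and the zero-mass hypothesis on $\mu$ is assumed. The finiteness of the Kantorovitch infimum follows from the triangle inequality and convexity of $t\mapsto t^r$ on $[0,\infty)$: one has $d^r(x,y)\le 2^{r-1}\bigl(d^r(x,x_0)+d^r(y,x_0)\bigr)$, so $c$ is $(\mu\otimes\nu)$-integrable by the moment hypotheses. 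The two nontrivial conditions to verify are the local semi-concavity of $x\mapsto d^r(x,y)$ uniformly in $y$ on compacts, and the left twist condition.

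For the twist condition, fix $x\in M$ and suppose $y\neq x$ is a point where $\partial c/\partial x(x,y)$ exists. A standard argument shows $y$ then lies outside the cut locus of $x$, so there is a unique minimizing unit-speed geodesic $\gamma$ from $x$ to $y$, and the first variation formula gives $d_xd(x,y)=-g(\dot\gamma(0),\cdot)$; hence
\[
\frac{\partial c}{\partial x}(x,y)=-r\,d^{r-1}(x,y)\,g(\dot\gamma(0),\cdot).
\]
From the covector $p=-\partial c/\partial x(x,y)$ one reads off $\|p\|_x=r\,d^{r-1}(x,y)$, hence recovers $d(x,y)=(\|p\|_x/r)^{1/(r-1)}$ and $\dot\gamma(0)=p^\sharp/\|p\|_x$, and therefore $y=\exp_x(d(x,y)\,\dot\gamma(0))$. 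This gives the required injectivity. For the local semi-concavity, the starting point is that $d^2$ is locally semi-concave on $M\times M$ (the standard Tonelli fact applied to $L(x,v)=\tfrac12\|v\|_g^2$), together with the identity $d^r=(d^2)^{r/2}$: for $r\ge 2$ the composition with the $C^2$ map $t\mapsto t^{r/2}$ preserves semi-concavity, while for $1<r<2$ the behaviour near the diagonal is more delicate and we invoke the finer semi-concavity results from the appendix (ultimately based on the variational characterization $d^r(x,y)=\inf_\gamma\int_0^1\|\dot\gamma\|_g^r\,ds$). This semi-concavity check near the diagonal is the main technical obstacle.

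Once these conditions are verified, Theorem \ref{mainthm} provides the existence and $\mu$-a.e.\ uniqueness of the optimal transport map, and the conclusion that every optimal plan is concentrated on its graph. For the explicit formula under $\mu\ll$ Lebesgue, Complement \ref{compmainthm} gives $\partial c/\partial x(x,T(x))=-\tilde d_x\varphi$ for $\mu$-a.e.\ $x$. Rewriting this identity via the Riemannian gradient defined by $g_x(\widetilde\grad^g_x\varphi,\cdot)=\tilde d_x\varphi$ and using the computation above, we obtain $\widetilde\grad^g_x\varphi=r\,d^{r-1}(x,T(x))\,\dot\gamma(0)$, hence $d(x,T(x))=(\|\widetilde\grad^g_x\varphi\|_x/r)^{1/(r-1)}$. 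Therefore
\[
T(x)=\exp_x\!\bigl(d(x,T(x))\,\dot\gamma(0)\bigr)=\exp_x\!\left(\frac{\widetilde\grad^g_x\varphi}{r\,d^{r-2}(x,T(x))}\right),
\]
and substituting the value of $d(x,T(x))$ together with the identity $1-(r-2)/(r-1)=1/(r-1)$ yields the stated exponential formula. Completeness of $(M,g)$ ensures $\exp$ is globally defined by Hopf-Rinow, and the points where $\widetilde\grad^g_x\varphi=0$ correspond to $T(x)=x$ and are handled by continuity of the formula (the factor $\|\widetilde\grad^g_x\varphi\|_x^{(r-2)/(r-1)}$ is interpreted in the natural way as the gradient vanishes).
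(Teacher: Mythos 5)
Your overall plan — apply Theorem \ref{mainthm} and Complement \ref{compmainthm} directly to $c=d^r$, then unwind the formula — matches the paper's, and your final algebra recovering $T(x)=\exp_x\bigl(\widetilde\grad^g_x\varphi/(r^{1/(r-1)}\lVert\widetilde\grad^g_x\varphi\rVert_x^{(r-2)/(r-1)})\bigr)$ is correct. Your twist-condition argument is a genuine, and in this concrete Riemannian case quite clean, alternative: instead of going through Lemma \ref{lemmaUC'} and Proposition \ref{RiemannianUFC'} you argue directly that differentiability of $d^r(\cdot,y)$ at $x\neq y$ forces uniqueness of the minimizing geodesic, and then read $y$ off from $p=-\partial c/\partial x(x,y)$ via $d(x,y)=(\lVert p\rVert_x/r)^{1/(r-1)}$ and $y=\exp_x(d(x,y)\,p^\sharp/\lVert p\rVert_x)$, taking care of $y=x$ through $p=0$.

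However, two things should be corrected. First, your framing that you ``bypass the Tonelli Lagrangian framework since $d^r$ does not arise from a Tonelli Lagrangian for $r\neq 2$'' is misleading: the paper does not use a Tonelli Lagrangian either, precisely because $\partial^2 L/\partial v^2$ degenerates at $v=0$; it introduces \emph{weak} Tonelli Lagrangians exactly for this, identifies $d^r=c_{1,L_{r,g}}$ with $L_{r,g}(x,v)=\lVert v\rVert_x^r$ (Proposition \ref{RiemannianUFC'}), and then Theorem \ref{costsemiconc} gives local semi-concavity uniformly for all $r>1$. You in fact fall back to this very characterization for $1<r<2$, so you have not bypassed anything for the harder range. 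Second, your semi-concavity argument for $r\geq 2$ by composing $d^2$ with $t\mapsto t^{r/2}$ has a real gap: that map is not $C^2$ at $t=0$ when $2<r<4$, and even where it is $C^2$ its convexity works against semi-concavity — one has to control the extra term $\phi''(d^2)\lvert\nabla_x d^2\rvert^2 \sim d^{r-4}\cdot d^2=d^{r-2}$, which stays bounded near the diagonal only because of the special cancellation $\lvert\nabla_x d^2\rvert=2d$. This can be made to work, but it is not the one-line ``$C^2$ composition preserves semi-concavity'' you assert. Simply invoking Theorem \ref{costsemiconc} for the weak Tonelli Lagrangian $L_{r,g}$, as the paper does, handles all $r>1$ without the case split and without the delicate boundary analysis.
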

\begin{proof}
We first remark that
\begin{align*}
d^r(x,y)&\leq [d(x,x_0)+d(x_0,y)]^r\\
&\leq  [2\max (d(x,x_0),d(x_0,y))]^r\\
&\leq 2^r[d(x,x_0)^r+d(y,x_0)^r].
\end{align*}
Therefore
\begin{align*}
\int_{M\times M}d^r(x,y)\,d\mu(x)d\nu(y)&\leq \int_{M\times M}2^r[d(x,x_0)^r+d(y,x_0)^r]\,d\mu(x)d\nu(y)\\
&=2^r\int_Md^r(x,x_0)\,d\mu(x)+2^r\int_Md^r(y,x_0)\,d\nu(y)<\i,
\end{align*}
and thus the infimum in the Kantorovitch problem (\ref{kantprob}) with cost $d^r$ is finite.

By Example \ref{ExampleRiemannian}, the Lagrangian $L_{r,g}(x,v)=\lVert v\rVert_x^r=g_x(v,v)^{r/2}$ is a weak Tonelli Lagrangian. By Proposition
\ref{RiemannianUFC'}, the non-negative and continuous cost $d^r(x,y)$ is precisely the cost $c_{1,L_{r,g}}$.
Therefore this cost is locally semi-concave by Theorem \ref{costsemiconc}.
By Proposition \ref{uniformlocalsemiconcave}, this implies that $d^r(x,y)$ satisfies
condition {\rm (i)} of Theorem \ref{mainthm}. The fact that the cost $d^r(x,y)$ satisfies
the left twist condition {\rm (ii)} of Theorem \ref{mainthm} follows from Proposition
\ref{RiemannianUFC'}. Therefore there is an optimal transport map $T$.

If the measure $\mu$ is absolutely continuous with respect to Lebesgue measure, and
$(\varphi,\psi)$ is a calibrated subsolution for the cost $d^r(x,y)$ and the pair of measures $(\mu,\nu)$, then by Complement \ref{compmainthm}, for $\mu$-almost every $x$, we have $(x,T(x))\in{\cal D}(\Lambda ^l_{c_{1,L_{r,g}}})$, and
$$
\frac{\partial c_{t,L_{r,g}}}{\partial x}(x,T(x))=-\tilde d_x\varphi.
$$
Since $(x,T(x))$ is in ${\cal D}(\Lambda ^l_{c_{1,L_{r,g}}})$, it follows from Proposition
\ref{RiemannianUFC'} that $T(x)=\pi\phi^g_1(x,v_x)$, where $\pi:TM\to M$ is the canonical projection, the flow $\phi^g_t$ is the geodesic flow of $g$ on $TM$, and
$v_x\in T_xM$ is determined by
$$
\frac{\partial c_{t,L_{r,g}}}{\partial x}(x,T(x))=-\frac{\partial L_{r,g}}{\partial v}(x,v_x),
$$
or, given the equality above, by
$$
\frac{\partial L_{r,g}}{\partial v}(x,v_x)=\tilde d_x\varphi.
$$
Now the vertical derivative of $L_{r,g}$ is computed in Example \ref{ExampleRiemannian}
$$
\frac{\partial L_{r,g}}{\partial v}(x,v)=r\lVert v\rVert_x^{r-2}g_x( v,\cdot).
$$
Hence $v_x\in T_xM$ is determined by
$$
r\lVert v_x\rVert_x^{r-2}g_x( v_x,\cdot)=\tilde d_x\varphi=g_x(\widetilde\grad^g_x(\varphi),\cdot).
$$
This gives the equality
$$
r\lVert v_x\rVert_x^{r-2}v_x=\widetilde\grad^g_x(\varphi),
$$
from which we easily get
$$
v_x=\frac{\widetilde\grad^g_x(\varphi)}{r^{1/(r-1)}\lVert \widetilde\grad^g_x(\varphi)\rVert_x^{(r-2)/(r-1)}}.
$$
Therefore
$$
T(x)=\pi\phi^g_t(x,\frac{\widetilde\grad^g_x(\varphi)}{r^{1/(r-1)}\lVert \widetilde\grad^g_x(\varphi)\rVert_x^{(r-2)/(r-1)}}).
$$
By definition of the exponential map $\exp:TM\to M$, we have
$\operatorname{exp}_x(v)=\pi\phi^g_t(x,v)$, and the formula for $T(x)$ follows.
\end{proof}

\section{The interpolation and its absolute continuity}
\label{sectinterp}
For a cost $c_{t,L}$ coming from a Tonelli Lagrangian $L$, Theorem
\ref{thLagragiancost} shows not only that we have an optimal transport map $T$
 but also that this map is obtained by following an extremal for time $t$.
 We can therefore interpolate the optimal transport by maps $T_s$ where
 we stop at intermediary times $s\in [0,t]$. We will show in this section that
 these maps are also optimal transport maps for costs coming from the same
 Lagrangian. Let us give now precise definitions.

For the sequel of this section, we consider $L$ a Tonelli Lagrangian on the connected manifold $M$. We fix $t>0$ and
$\mu_0$ and $\mu_t$ two probability measures on $M$, with $\mu_0$ absolutely continuous with respect to Lebesgue measure, and such that
$$
\min_{\gamma \in \Pi(\mu_0,\mu_t)} \left\{ \int_{M \times M} c_{t,L}(x,y)\,d\gamma(x,y) \right\}<+\i.
$$
We call $T_t$ the optimal transport map given by Theorem \ref{thLagragiancost} for $(c_{t,L},\mu_0,\mu_t)$.
We denote by $(\varphi,\psi)$ a fixed $(c_{t,L},\gamma_t)$-calibrated pair. Therefore $\gamma_t=(\operatorname{Id}_M \tilde\times T_t)_\# \mu_0$
is the unique optimal plan from $\mu_0$ to $\mu_t$.
By Theorem \ref{thLagragiancost}, we can find a Borel subset $B\subset M$ such that:
\begin{itemize}
\item the subset $B$ is of full $\mu_0$ measure;
\item the approximate ddifferential $\tilde d_x\varphi$ exists for every $x\in B$, and is Borel measurable on $B$;
\item the map $T_t$ is defined at every $x\in B$, and we have
$$
T_t(x)=\pi\phi^L_t(x,\widetilde\grad^L_x(\varphi)),
$$
where $\phi_L^t$ is the Euler-Lagrange flow of $L$,  $\pi:TM \rightarrow M$ is the canonical projection, and the Lagrangian approximate gradient $x\mapsto \widetilde\grad^L_x(\varphi))$ is defined by
$$
\frac{\partial L}{\partial v}(x,\widetilde\grad^L_x(\varphi))=\tilde d_x\varphi;
$$
\item for every $x\in B$, the partial derivative $\displaystyle\frac{\partial c}{\partial x}(x,T_t(x))$ exists,
and is uniquely defined by
$$
\frac{\partial c}{\partial x}(x,T_t(x))=-\tilde d_x\varphi;
$$
\item for every $x\in B$ there exists a unique $L$-minimizer $\gamma_x:[0,t]\to M$ between $x$ and $T_t(x)$. This $L$-minimizer $\gamma_x$ is given by
$$
\forall s\in[0,t],\quad
\gamma_ x(s)=\pi\phi^L_s(x,\widetilde\grad^L_x(\varphi));
$$
\item for every $x\in B$, we have
$$
\psi(T_t(x))-\varphi(x)=c_{t,L}(x,T_t(x)).
$$
\end{itemize}
For $s\in[0,t]$, we can therefore define an interpolation $T_s:M\to M$ defined on $B$, and hence $\mu_0$ almost everywhere, by
$$
T_s(x)=\gamma_ x(s)=\pi\phi^L_s(x,\widetilde\grad^L_x(\varphi)).
$$
Each map $T_s$ is Borel measurable. In fact, since the global Legendre transform is a homeomorphism and the
approximate differential is Borel measurable,
the Lagrangian approximate gradient $\widetilde\grad^L(\varphi)$ is itself Borel measurable.
Moreover the map $\pi\phi_s^L:TM\to M$ is continuous, and thus $T_s$ is Borel measurable.
We can therefore define the probability measure $\mu_s=T_{s\#}\mu_0$ on $M$, i.e.\ the measure $\mu_s$ is the image of $\mu_0$ under the Borel measurable map $T_s$.
\begin{theorem}\label{interpolation}{\sl Under the hypotheses above, the maps $T_s$ satisfies the following properties:
\begin{itemize}
\item[{\rm (i)}] For every $s\in ]0,t[$, the map $T_s$ is the (unique) optimal transport maps for the cost $c_{s,L}$ and the pair of measures $(\mu_0,\mu_s)$.
\item[{\rm(ii)}] For every $s\in ]0,t[$,
the map $T_s: B\to M$ is injective. Moreover, if we define $\bar c_{s,L}(x,y) =c_{s,L}(y,x)$,
the inverse map  $T_s^{-1}:T_s( B)\to B$ is the (unique) optimal transport map for the cost $\bar c_{s,L}$
and the pair of measures $(\mu_s,\mu_0)$, and it is
countably Lipschitz (i.e. there exist a Borel countable partition of
$M$ such that $T_s^{-1}$ is Lipschitz on each set).
\item[{\rm (iii)}] For every $s\in ]0,t[$, the measure $\mu_s=T_{s\#}\mu_0$ is absolutely continuous with respect to Lebesgue measure.
\item [{\rm (iv)}] For every $s\in ]0,t[$, the composition $\hat T_s=T_tT_s^{-1}$ is the (unique) optimal transport map for the cost $c_{t-s,L}$ and the pair of measures $(\mu_s,\mu_t)$,
and it is countably Lipschitz.
\end{itemize}}
\end{theorem}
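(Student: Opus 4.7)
My plan is to construct, for each $s\in(0,t)$, a Kantorovich potential $\chi_s$ that calibrates $T_s$, and then leverage Theorems \ref{thmduality} and \ref{thLagragiancost}. The main obstacle will be the countable Lipschitz regularity of $T_s^{-1}$ in Part (ii), which is in turn the key tool for Parts (iii) and (iv).

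For Part (i), I would introduce the Hopf--Lax-type potential
\begin{equation*}
\chi_s(y) \;=\; \sup_{z \in M}\bigl[\psi(z) - c_{t-s,L}(y,z)\bigr].
\end{equation*}
The Bellman inequality $c_{t,L}(x,z) \le c_{s,L}(x,y) + c_{t-s,L}(y,z)$, coming from concatenating piecewise $C^1$ curves, combined with $\psi(z) - \varphi(x) \le c_{t,L}(x,z)$, yields $\chi_s(y) - \varphi(x) \le c_{s,L}(x,y)$ for all $(x,y)$, so $(\varphi,\chi_s)$ is a $c_{s,L}$-subsolution. On the graph of $T_s$ I pick $z = T_t(x)$: since $\gamma_x|_{[0,s]}$ and $\gamma_x|_{[s,t]}$ are themselves $L$-minimizers, Bellman's inequality becomes an equality, and together with the calibration $\psi(T_t(x)) - \varphi(x) = c_{t,L}(x, T_t(x))$ this forces $\chi_s(T_s(x)) - \varphi(x) = c_{s,L}(x, T_s(x))$. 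Hence $(\varphi, \chi_s)$ calibrates $\gamma_s = (\operatorname{Id}_M \tilde\times T_s)_\sharp \mu_0$, Theorem \ref{thmduality} gives optimality, and uniqueness follows from Theorem \ref{thLagragiancost} since $\mu_0$ is absolutely continuous.

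For injectivity in Part (ii), suppose $x_1, x_2 \in B$ with $T_s(x_1) = T_s(x_2) =: y$. The two concatenations $\gamma_{x_1}|_{[0,s]} * \gamma_{x_2}|_{[s,t]}$ and $\gamma_{x_2}|_{[0,s]} * \gamma_{x_1}|_{[s,t]}$ give
\begin{equation*}
c_{t,L}(x_1, T_t(x_2)) + c_{t,L}(x_2, T_t(x_1)) \;\le\; c_{t,L}(x_1, T_t(x_1)) + c_{t,L}(x_2, T_t(x_2)),
\end{equation*}
while adding the $c_{t,L}$-subsolution inequalities for the two crossed pairs and comparing with the calibration equalities at $(x_1,T_t(x_1))$ and $(x_2,T_t(x_2))$ yields the reverse inequality. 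Equality forces both concatenations to be $L$-minimizers themselves, hence $C^1$ solutions of Euler--Lagrange with no corner at $y$, so $\dot\gamma_{x_1}(s) = \dot\gamma_{x_2}(s)$; uniqueness of the Euler--Lagrange flow then gives $x_1 = x_2$. Optimality of $T_s^{-1}$ for $\bar c_{s,L}$ follows by a symmetric argument using the time-reversed Tonelli Lagrangian $L^*(x,v) = L(x,-v)$, which makes $(\chi_s, \varphi)$ a $\bar c_{s,L}$-calibrated pair for the plan $(T_s^{-1} \tilde\times \operatorname{Id}_M)_\sharp \mu_s$.

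The hardest step is the countable Lipschitz property of $T_s^{-1}$. The key point is that $\chi_s$ is a supremum of the family $\{\psi(z) - c_{t-s,L}(\cdot, z)\}_{z}$, each of whose elements is locally semi-convex uniformly in $z$ by hypothesis (i) of Theorem \ref{mainthm}; stratifying by sets $V_n$ exactly as in the proof of Theorem \ref{keythm} to ensure boundedness, $\chi_s$ becomes locally semi-convex on a cofinal sequence of Borel subsets, and Alexandrov's theorem yields that its approximate gradient $\tilde d\chi_s$ is countably Lipschitz. By the same Euler--Lagrange / twist-condition analysis used in Theorem \ref{thLagragiancost} (applied to the backward flow), the inverse admits the representation
\begin{equation*}
T_s^{-1}(y) \;=\; \pi\,\phi^L_{-s}\bigl(y,\widetilde\grad^L_y \chi_s\bigr),
\end{equation*}
which is Lipschitz on each piece of a partition on which $\widetilde\grad^L\chi_s$ is Lipschitz. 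Part (iii) is then immediate: a countable Lipschitz map sends Lebesgue-null sets to Lebesgue-null sets, so $\mu_s(A) = \mu_0(T_s^{-1}(A)) = 0$ whenever $A$ is null. For Part (iv), the pair $(\chi_s, \psi)$ is a $c_{t-s,L}$-subsolution directly from the definition of $\chi_s$ as a sup, and equality on the graph of $\hat T_s = T_t \circ T_s^{-1}$ is obtained by subtracting the equality of Part (i) from the original $c_{t,L}$-calibration. This calibrates $(\operatorname{Id}_M \tilde\times \hat T_s)_\sharp \mu_s$, Theorem \ref{thmduality} yields optimality, uniqueness follows from Part (iii) via Theorem \ref{thLagragiancost}, and countable Lipschitz of $\hat T_s$ follows by composition, since $T_t|_B$ is itself countably Lipschitz by the same Alexandrov argument applied to the semi-convex $\varphi_n$ of Theorem \ref{keythm}.
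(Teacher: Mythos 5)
Your strategy for Part (i) is essentially the paper's: you build the Hopf--Lax potential $\chi_s$, which coincides (on the relevant set) with the paper's $\varphi_s = \lim_n\varphi_s^n$, you verify the $c_{s,L}$-subsolution inequality via the Bellman estimate, and you calibrate on $\operatorname{Graph}(T_s)$ by picking $z=T_t(x)$. Two points deserve care that you handle only implicitly: $\chi_s$ must be shown finite and Borel, which is exactly why the paper works with the stratified approximants $\varphi_s^n$ (restricting the $\sup$ to the sets $V_n'$) and passes to a monotone limit. That is a fixable omission. Your injectivity argument in Part (ii) is a genuine deviation from the paper, and a valid one: you run a cyclical-monotonicity swap on the crossed pairs $(x_1,T_t(x_2))$, $(x_2,T_t(x_1))$, derive an equality chain, conclude that both concatenated curves are $L$-minimizers, hence $C^1$ by Theorem~\ref{regularityminimizer}, hence agree in velocity at time~$s$, and finish with uniqueness of the Euler--Lagrange flow. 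The paper instead constructs the inverse $\theta_n$ directly and reads injectivity off that construction, so your approach is independent and arguably more transparent, though it yields only injectivity, not regularity.

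The serious problem is the countably Lipschitz regularity of $T_s^{-1}$, which is the crux of Parts (ii)--(iv). You claim that semi-convexity of (stratified) $\chi_s$ plus ``Alexandrov's theorem'' yields that $\tilde d\chi_s$ is countably Lipschitz. This does not work as stated. Alexandrov's theorem gives twice differentiability $\mathscr{L}^n$-almost everywhere. Even granting a Lusin--Federer--Whitney type decomposition of the gradient into countably many Lipschitz pieces, this regularity holds only off a \emph{Lebesgue}-null set $Z$. To prove Part (iii) you need to show $\mu_s(A)=0$ whenever $\mathscr L^n(A)=0$; your argument requires $T_s^{-1}$ to be countably Lipschitz on a set of full $\mu_s$-measure, but at this stage of the proof you do not yet know that $\mu_s$ is absolutely continuous, so you cannot conclude that $\mu_s(Z)=0$. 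The argument is circular. The paper avoids this entirely via the two-sided ``sandwich'' lemma (Theorem~\ref{CriterionForDiff}): on the contact set $T_s(B_n)$ the semi-convex $\varphi_s^n$ touches the semi-concave $\psi_s^n$ from below, both with a \emph{linear} modulus, and on that set both are differentiable with a common derivative that is locally Lipschitz \emph{everywhere on the contact set}, not merely a.e. This requires producing the dual approximants $\psi_s^n(z)=\inf_{\tilde z\in V_n}\varphi(\tilde z)+c_{s,L}(\tilde z,z)$ alongside your $\chi_s$, and observing that $\varphi_s^n\le\psi_s^n$ with equality on $T_s(B_n)$, something your proposal never does. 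Until this is repaired, the countably Lipschitz conclusion of (ii), and hence (iii) and (iv), remain unproved.
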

\begin{proof} Fix $s\in ]0,t[$. It is not difficult to see, from the definition of $c_{t,L}$,  that
\begin{equation*}
\forall x,y,z\in M, \quad
c_{t,L}(x,y)\leq c_{s,L}(x,y)+c_{t-s,L}(y,z),\tag{1}
\end{equation*}
and even that
\begin{equation*}
\forall x,z\in M, \quad
c_{t,l}(x,z)=\inf_{y\in M} c_{s,L}(x,y)+c_{t-s,L}(y,z).
\end{equation*}
If $\gamma:[a,b]\to M$ is an $L$-minimizer, the restriction
$\gamma\vert_{[c,d]}$ to a subinterval $[c,d]\subset [a,b]$ is also an  $L$-minimizer.
In particular, we obtain
$$
\forall s\in ]a,b[, \quad
c_{b-a,L}(\gamma(a),\gamma(b))=c_{s-a,L}(\gamma(a),\gamma(s))+c_{b-s,L}(\gamma(s),\gamma(b)).
$$
Applying this to the $L$-minimizer $\gamma_x$, we get
\begin{equation*}
\forall x\in B, \quad
c_{t,L}(x,T_t(x))=c_{s,L}(x,T_s(x))+c_{t-s,L}(T_s(x),T_{t}(x)).\tag{2}
\end{equation*}
We define for every $s\in ]0,t[ $, two probability measures $\gamma_s,\hat \gamma_s$ on $M\times M$, by
$$
\gamma_s =(\operatorname{Id}_M \tilde\times T_s)_\#\mu_0 \quad
\text{and} \quad \hat\gamma_s =(T_s \tilde\times T_t)_\#\mu_0,
$$
where $\operatorname{Id}_M \tilde\times T_s$ and $T_s \tilde\times T_t$ are the maps from the subset $B$ of full $\mu_0$ measure to
$M\times M$ defined by
\begin{align*}
&(\operatorname{Id}_M \tilde\times T_s)(x) =(x, T_s(x)),\\
&(T_s \tilde\times T_t)(x) =(T_s(x),T_t(x)).
\end{align*}
Note that the marginals of $\gamma_s$ are $(\mu_0,\mu_s)$, and  those of $\hat\gamma_s$ are $(\mu_s,\mu_t)$. We claim that $c_{s,L}(x,y)$ is integrable for $\gamma_s$ and $\hat\gamma_{t-s}$. In fact, we have $C=\inf_{TM} L>-\infty$, hence $c_{r,L}\geq Cr$.
Therefore, the equality  (2) gives
\begin{equation*}\forall x\in B, \quad
[c_{t,L}(x,T_t(x))-Ct]=[c_{s,L}(x,T_s(x))-Cs]+[c_{t-s,L}(T_s(x),T_{t}(x))-C(t-s)].
\end{equation*}
Since the functions between brackets are all non-negative, we can integrate this equality with respect to $\mu_0$ to obtain
$$\int_{M\times M}[c_{t,L}(x,y)-Ct]\,d\gamma_t=\int_{M\times M}[c_{s,L}(x,y)-Cs]\,d\gamma_s+\int_{M\times M}[c_{t-s,L}(x,y)-C(t-s)]\,d\hat\gamma_s.$$
But all numbers involved in the equality above are non-negative, all measures are probability measures, and the cost $c_{t,L}$ is $\gamma_t$ integrable since $\gamma_t$ is an optimal plan for $(c_{t,L},\mu_0,\mu_t)$,
and the optimal cost of $(c_{t,L},\mu_0,\mu_t)$ is finite. Therefore we obtain that
$c_{s,L}$ is $\gamma_s$-integrable, and $c_{t-s,L}$ is $\hat\gamma_s$-integrable.

Since by definition of a calibrating pair we have $\varphi>-\infty$ and $\psi<+\infty$ everywhere on $M$,
we can find an increasing sequence of compact subsets $K_n\subset M$ with $\cup_nK_n=M$, and
we consider $V_n =K_n \cap \{\varphi\geq -n\}$, $V_n' =K_n \cap \{\psi\leq n\}$,
so that $\cup_n V_n=\cup_n V_n'=M$.

We define the functions $\varphi^n_s,\psi^n_s:M\to\R$ by
\begin{align*}
\psi^n_s(z)&=\inf_{\tilde z\in V_n}\varphi(\tilde z)+c_{s,L}(\tilde z,z),\\
\varphi^n_s(z)&=\sup_{\tilde z\in V'_n}\psi(\tilde z)-c_{t-s,L}(z, \tilde z),
\end{align*}
where $(\varphi,\psi)$ is the fixed $c_{t,L}$-calibrated pair. Note that $\psi_s^n$ is bounded from below
by $-n+t\inf_{TM}L>-\infty$.  Moreover, the family of functions
$(\varphi(\tilde z)+c_{s,L}(\tilde z,\cdot))_{\tilde z\in V'_n}$ is locally uniformly semi-concave with a linear modulus,
since this is the case for the family of functions
$(c_{s,L}(\tilde z,\cdot))_{\tilde z\in V'_n}$, by Theorem \ref{costsemiconc} and Proposition
\ref{uniformlocalsemiconcave}. It follows from Proposition
\ref{InfSemiConcaveSurCompact} that $\psi_s^n$  is semi-concave with a linear modulus.
A similar argument proves that  $-\varphi^n_s$ is semi-concave with a linear modulus.
Note also that, since $V_n$ and $V'_n$ are both increasing sequences, we have
$\psi^n_s\geq \psi^{n+1}_s$ and $\varphi^{n+1}_s\geq \varphi^{n}_s$, for every $n$.
Therefore we can define $\varphi_s$ (resp.\ $\psi_s$)  as the pointwise limit of the sequence $\varphi_s^n$

Using the fact that $(\varphi,\psi)$ is  a $c_{t,L}$-subsolution, and inequality (1) above,
we obtain
$$
\forall x,y,z\in M, \quad
\psi(y)-c_{t-s,L}(z,y) \leq \varphi(x)+c_{s,L}(x,z).
$$
Therefore we obtain for $x\in V_n, y\in V'_n,z\in M$
\begin{equation*}
\psi(y)-c_{t-s,L}(z,y) \leq \varphi_s^n(z) \leq \varphi_s(z) \leq \psi_s(z)
\leq \psi_s^n(z) \leq \varphi(x)+c_{s,L}(x,z).\tag{3}
\end{equation*}
Inequality (3) above yields
\begin{equation*}
\forall x,y,z\in M, \quad
\psi(y)-c_{t-s,L}(z,y) \leq \varphi_s(z) \leq \psi_s(z) \leq \varphi(x)+c_{s,L}(x,z).\tag{4}
\end{equation*}
In particular, the pair $(\varphi,\psi_s)$ is a $c_{s,L}$-subsolution, and the pair $(\varphi_s,\psi)$ is a $c_{t-s,L}$-subsolution.
Moreover, $\varphi$, $\psi_s$, $\varphi_s$ and $\psi$ are all Borel measurable.

We now define
$$
B_n =B\cap V_n\cap T_t^{-1}(V'_n),
$$
so that $\cup_n B_n=B$ has full $\mu_0$-measure.

If $x\in B_n$, it satisfies $x\in V_n$ and $T_t(x)\in V'_n$. From Inequality (3) above, we obtain
\begin{multline*}
\psi(T_t(x))-c_{t-s,L}(T_s(x),T_t(x)) \leq \varphi_s^n(T_s(x)) \leq \varphi_s(T_s(x)) \\
\leq \psi_s(T_s(x)) \leq \psi_s^n(T_s(x)) \leq \varphi(x)+c_{s,L}(x,T_s(x))
\end{multline*}
Since $B_n\subset B$, for $x\in B_n$, we have $\psi(T_t(x))-\varphi(x)=c_{t,L}(x,T_t(x))$.
Combining this with Equality (2), we conclude that the two extreme terms in the inequality above are equal. Hence, for every $x\in B_n$, we have
\begin{multline*}
\psi(T_t(x))-c_{t-s,L}(T_s(x),T_t(x)) = \varphi_s^n(T_s(x)) = \varphi_s(T_s(x)) \\
= \psi_s(T_s(x)) = \psi_s^n(T_s(x)) = \varphi(x)+c_{s,L}(x,T_s(x)).\tag{5}
\end{multline*}
In particular, we get
$$
\forall x\in B, \quad
\psi_s(T_s(x))=\varphi(x)+c_{s,L}(x,T_s(x)),
$$
or equivalently
$$
\psi_s(y)-\varphi(x) = c_{s,L}(x,y) \quad \text{for  $\gamma_s$-a.e. $( x,y)$}.
$$
Since $(\varphi,\psi_s)$ is a (Borel) $c_{s,L}$-subsolution,
it follows that the pair $(\varphi,\psi_s)$ is  $(c_{s,L},\gamma_s)$-calibrated.
Therefore, by Theorem \ref{thmduality} we get that
$\gamma_s=(\operatorname{Id}_M \tilde\times T_s)_\#\mu_0$ is an optimal plan for
$(c_{s,L},\mu_0,\mu_s)$. Moreover, since $c_{s,L}$ is $\gamma_s$-integrable, the
infimum in the Kantorovitch problem (\ref{kantprob}) in Theorem \ref{mainthm} with cost $c_{s,L}$ is finite, and therefore there exists
a unique optimal transport plan. This proves (i).

Note for further reference that a similar argument, using the equality
$$
\forall x\in B, \quad
\psi(T_t(x))=\varphi_s(T_s(x))+c_{t-s,L}(T_s(x),T_t(x)),
$$
which follows from Equation (5) above, shows that the measure $\hat\gamma_s=(T_s \tilde\times T_t)_\#\mu_0$
is an optimal plan for the cost $c_{t-s,L}$ and the pair of measures $(\mu_s,\mu_t)$.

We now want to prove (ii). Since $B$ is the increasing union of $B_n=B\cap V_n\cap T_t^{-1}(V'_n)$,
it suffices to prove that $T_s$ is injective on $B_n$ and that the restriction
$T^{-1}|_{T(B_n)}$ 
is locally Lipschitz on $T_s(B_n)$.

Since $B_n\subset V_n$, by Inequality (3) above
we have
\begin{equation*}
\forall x\in B_n,\forall y\in M, \quad
\varphi^n_s(y)\leq \psi^n_s(y)\leq \varphi(x)+c_{s,L}(x,y).\tag{6}
\end{equation*}
Moreover, by Equality (5) above
\begin{equation*}
\forall x\in B_n,\quad
\varphi^n_s(T_s(x))=\psi^n_s(T_s(x))= \varphi(x)+c_{s,L}(x,T_s(x)).\tag{7}
\end{equation*}
In particular, we have $\varphi^n_s\leq \psi^n_s$ everywhere with equality at every point of
$T_s(B_n)$. As we have said above, both functions $\psi^n_s$ and $-\varphi^n_s$ are locally semi-concave with a
linear modulus. It follows, from Theorem \ref{CriterionForDiff}, that both derivatives
$d_z\varphi^n_s,d_z\psi^n_s$ exist and are equal for $z\in T_s(B_n)$.
Moreover, the map
$z\mapsto d_z\varphi^n_s=d_z\psi^n_s$ is locally Lipschitz on $T_s(B_n)$.
Note that we also get from (6) and (7) above that for a fixed $x\in B_n$, we have
$\varphi^n_s\leq \varphi(x)+c_{s,L}(x, \cdot)$ everywhere with equality at $T_s(x)$.
Since $\varphi_n$ is semi-convex, using that $c_{s,L}(x,\cdot)$ is semi-concave,
again by Theorem \ref{CriterionForDiff}, we obtain that the partial derivative
$\displaystyle\frac{ \partial c_{s,L}}{\partial y}(x,T_s(x))$ of $c_{s,L}$
with respect to the second variable exists and is equal to $d_{T_s(x)}\varphi^n_s=d_{T_s(x)}\psi^n_s$.
Since $\gamma_x:[0,t]\to M$ is an $L$-minimizer with $\gamma_x(0)=x$ and
$\gamma_x(s)=T_s(x)$, it follows from Corollary \ref{ComputationSuperdifferential} that
$$
d_{T_s(x)}\psi^n_s=\frac{ \p c_{s,L}}{\partial y}(x,T_s(x))=\frac{\partial L}{\partial v}(\gamma_x(s),\dot\gamma_x(s)).
$$
Since $\gamma_x$ is an $L$-minimizer, its speed curve is an orbit of the Euler-Lagrange flow, and therefore
$$
(T_s(x),d_{T_s(x)}\psi^n_s)=\Leg ((\gamma_x(s),\dot\gamma_x(s))=\Leg \phi^L_s(\gamma_x(0),\dot \gamma_x(0)),
$$
and
$$
x=\pi\phi^L_{-s}\Leg^{-1}(T_s(x),d_{T_s(x)}\psi^n_s).
$$
It follows that $T_s$ is injective on $B_n$ with inverse given by the map $\theta_n:T_s(B_n)\to B_n$ defined, for $z\in T_s(B_n)$, by
$$
\theta_n(z) =\pi\phi^L_{-s}\Leg^{-1}(z,d_z\psi^n_s).
$$
Note that the map $\theta_n$ is locally Lipschitz on $T_s(B_n)$, since this is the case for
$z\mapsto d_z\psi^n_s$, and both maps $\phi^L_{-s},\Leg^{-1}$ are C$^1$, since $L$ is a Tonelli Lagrangian.
An analogous argument proves the countably Lipshitz regularity of $\hat T_s=T_tT_s^{-1}$ in part (iv).
Finally the optimality of $T_s^{-1}$ simply follows from
\begin{align*}
\min_{\gamma \in \Pi(\mu_s,\mu_0)} \left\{ \int_{M \times M} \bar c_{s,L}(x,y)\,d\gamma(x,y) \right\}
&=\min_{\gamma \in \Pi(\mu_0,\mu_s)} \left\{ \int_{M \times M} c_{s,L}(x,y)\,d\gamma(x,y) \right\}\\
&=\int_M c_{s,L}(x,T_s(x))\,d\mu_0(x)\\
&=\int_M \bar c_{s,L}(y,T_s^{-1}(y))\,d\mu_s(y).
\end{align*}

Part (iii) of the Theorem follows from part (ii). In fact, if $A\subset M$ is Lebesgue negligible, the image $T_s^{-1}(T_s(B)\cap A)$ is also Lebesgue negligible, since
$T_s^{-1}$ is countably Lipschitz on $T_s(B)$, and therefore
sends Lebesgue negligible subsets to  Lebesgue negligible subsets.
It remains to note, using that $B$ is of full $\mu_0$-measure, that
$\mu_s(A)=T_{s\#}\mu_0(A)=\mu_0(T_s^{-1}(T_s(B)\cap A))=0$.

To prove part (iv), we already know that $\hat \gamma_s=(T_s \tilde\times T_t)_\#\mu_0$ is an optimal plan for the cost $c_{t-s,L}$ and the measures $(\mu_s,\mu_t)$. Since the Borel set
$B$ is of full $\mu_0$-measure, and $T_s:B\to T_s(B)$ is a bijective Borel measurable map,
we obtain that $T^{-1}_{s}$ is a Borel map, and
$\mu_0=T^{-1}_{s\#}\mu_s$. It follows that
$$
\hat \gamma_s=(\operatorname{Id}_M \tilde\times T_tT^{-1}_s)_\#\mu_s.
$$
Therefore the composition $T_tT^{-1}_s$ is an optimal transport map for the cost $c_{t-s,L}$ and the pair of measures $(\mu_s,\mu_t)$,
and it is the unique one since $c_{t-s,L}$ is $\hat\gamma_s$-integrable
and $\mu_s$ is absolutely continuous with respect to the Lebesgue measure.
\end{proof}

\begin{remark}
\label{rmkintegrability}{\rm
We observe that, in proving the uniqueness statement in parts (i) and (iv) of the above theorem,
we needed the full generality of Theorem \ref{thLagragiancost}.
Indeed, assuming
$$
\int_{M\times M}c_{t,L}(x,y)\,d\mu_0(x)d\mu_t(y)<+\infty,
$$
there is a priori no reason for which the two integrals
$$
\int_{M\times M}c_{s,L}(x,y)\,d\mu_0(x)d\mu_s(y), \quad
\int_{M\times M}c_{t-s,L}(x,y)\,d\mu_s(x)d\mu_t(y)
$$
would have to be finite.
So the existence and uniqueness of a transport map in Theorem \ref{mainthm}
under the integrability assumption on $c$ with respect to $\mu \otimes \nu$
instead of assumption (iv) would not have been enough to obtain Theorem
\ref{interpolation}.}
\end{remark}

\begin{remark}{\rm
We remark that,
if both $\mu_0$ and $\mu_t$ are not assumed to be absolutely continuous, and therefore no optimal transport map
necessarily exists, one can still define
an ``optimal'' interpolation $(\mu_s)_{0 \leq s \leq t}$ between $\mu_0$ and $\mu_t$
using some measurable selection theorem (see \cite[Chapter 7]{villani2}). Then, adapting our proof,
one still obtains that, for any $s \in (0,t)$, there exists a unique optimal transport map
$S_s$ for $(\bar c_{s,L},\mu_s,\mu_0)$ (resp. a unique optimal transport map
$\hat S_s$ for $(c_{t-s,L},\mu_s,\mu_t)$), and this map is countably Lipschitz.

We also observe that, if the manifold is compact, our proof shows that the above maps are globally Lipschitz (see \cite{bernbuf}).}
\end{remark}

\bigskip
\bigskip
\bigskip
\appendix
\centerline{\Large \bf Appendix}
\section{Semi-concave functions}
We give the definition of semi-concave function and we recall their principal properties.
The main reference on semi-concave functions is the book \cite{cansin}.

We first recall the definition of a modulus (of continuity).

\begin{definition}[Modulus]{\rm A \textit{modulus} $\omega$ is a continuous non-decreasing function
$\omega: [0,+\infty) \rightarrow [0,+\infty)$ such that $\omega(0)=0$.

We will say that a modulus is {\it linear} if it is of the form $\omega(t)=kt$, where $k\geq 0$ is some fixed constant.}
\end{definition}
We will need the notion of superdifferential. We define it in an intrinsic way on a manifold.
\begin{definition}[Superdifferential]
\label{defsuperdiff}{\rm Let $f:M \to \R$ be a function.
We say that $p \in T^*_xM$ is a \textit{superdifferential} of $f$ at $x\in M$, and we write $p \in D^+f(x)$,
if there exists a function $g:V \rightarrow \R$, defined on some open subset $U\subset M$ containing $x$, such that $g \geq f$, $g(x)=f(x)$, and
$g$ is differentiable at $x$ with $d_xg=p$.}
\end{definition}
We now give the definition of a semi-concave function on an open subset of a Euclidean space.
\begin{definition}[Semi-concavity]\label{defsemicon}{\rm
Let $U \subset \R^n$ open.
A function $f:U \rightarrow \R$ is said to be \textit{semi-concave} in $U$ with modulus $\omega$
(equivalently $\omega$-\textit{semi-concave}) if,
for each $x \in U$, we have
$$
f(y) - f(x) \leq \< l_x,y-x\> + \lVert y-x\rVert \omega(\lVert y-x\rVert)
$$
for a certain linear form $l_x: \R^n \rightarrow \R$.\\
Note that necessarily $l_x\in D^+f(x)$.
Moreover we say that $f:U \rightarrow \R$ is \textit{locally semi-concave} if, for each $x \in U$,
there exists an open neighborhood of
$x$ in which $f$ is semi-concave for a certain modulus.

We will say that the  function $f:U\to \R$ is locally semi-concave with a linear modulus if,
for each $x\in U$, we can find an open neighborhood  $V_x$ such that the restriction $f|_{V_x}$ is $\omega$-semi-concave, with $\omega$ a linear modulus.}
\end{definition}
\begin{proposition}\label{EasyPropSemiConcave}{\sl 1) Suppose $f_i:U\to \R, i=1,\dots,k$ is $\omega_i$-semi-concave, where $U$ is an open subset of $\R^n$. Then we have:
\begin{itemize}
\item[\rm (i)] for any $\alpha_1,\dots,\alpha_k\geq 0$, the functions $\sum_{i=1}^k\alpha_if_i$ is $(\sum_{i=1}^k\alpha_i\omega_i)$-semi-concave on $U$.
\item[\rm (ii)] the function $\min_{i=1}^kf_i$ is $(\max_{i=1}^k\omega_i)$-semi-concave.
\end{itemize}
2) Any {\rm C}$^1$ function is locally semi-concave.}
\end{proposition}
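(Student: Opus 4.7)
The statement is essentially a bookkeeping verification: I need to unwind the definition of $\omega$-semi-concavity and see that each of the claimed operations preserves the relevant supporting inequality. None of the three parts should require anything clever; the main thing to watch is to select the right supporting linear form at each base point.

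For 1(i), I would fix $x\in U$ and, for each $i$, pick a linear form $l_{i,x}$ witnessing the $\omega_i$-semi-concavity of $f_i$ at $x$. Then I would simply take the convex combination $\sum_i \alpha_i l_{i,x}$ and add up the inequalities
$$f_i(y)-f_i(x)\leq \langle l_{i,x},y-x\rangle+\lVert y-x\rVert\,\omega_i(\lVert y-x\rVert),$$
weighted by $\alpha_i\geq 0$, to read off that $\sum_i\alpha_i f_i$ is $(\sum_i\alpha_i\omega_i)$-semi-concave with supporting form $\sum_i\alpha_i l_{i,x}$.

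For 1(ii), the trick is to choose, at each point $x$, an index $i_0=i_0(x)$ achieving the minimum: $f_{i_0}(x)=\min_i f_i(x)$. Then for every $y\in U$ one has $\min_i f_i(y)\leq f_{i_0}(y)$, while at $x$ equality holds, so
$$\min_i f_i(y)-\min_i f_i(x)\leq f_{i_0}(y)-f_{i_0}(x)\leq \langle l_{i_0,x},y-x\rangle+\lVert y-x\rVert\,\omega_{i_0}(\lVert y-x\rVert).$$
Since each $\omega_j$ is nonnegative, $\omega_{i_0}\leq \max_j\omega_j$ pointwise, and the semi-concavity of $\min_i f_i$ with modulus $\max_j\omega_j$ and supporting form $l_{i_0,x}$ follows.

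For part 2, the plan is to exploit uniform continuity of $df$ on a small closed ball. Given $x_0\in U$ with $f\in C^1(U)$, pick a compact convex neighborhood $K\subset U$ of $x_0$; then $df$ is uniformly continuous on $K$, so there is a modulus $\omega$ with $\lVert df(y)-df(x)\rVert\leq \omega(\lVert y-x\rVert)$ for $x,y\in K$. For such $x,y$ I would write
$$f(y)-f(x)-\langle d_xf,y-x\rangle=\int_0^1 \bigl\langle d_{x+t(y-x)}f-d_xf,\,y-x\bigr\rangle\,dt,$$
bound the integrand by $\omega(t\lVert y-x\rVert)\,\lVert y-x\rVert\leq \omega(\lVert y-x\rVert)\,\lVert y-x\rVert$ (using monotonicity of $\omega$), and conclude that $f$ is $\omega$-semi-concave on the interior of $K$ with supporting form $l_x=d_xf$. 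Since $x_0$ was arbitrary, this gives local semi-concavity on $U$. No step here looks like a real obstacle; the only thing to be careful about is ensuring the neighborhood is convex so that the segment from $x$ to $y$ stays in $K$ and the integral representation is legitimate.
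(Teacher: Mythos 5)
Your proof is correct and follows essentially the same route as the paper's: for 1(i) you sum the weighted supporting inequalities, for 1(ii) you pick the minimizing index at $x$ and use its supporting form, and for part 2 you use the integral form of the mean value theorem on a compact convex neighborhood together with a modulus of continuity for $d_xf$. The only difference is that you spell out a couple of steps (the one-sided comparison $\min_i f_i(y)\leq f_{i_0}(y)$, and moving $d_xf$ inside the integral) that the paper leaves implicit.
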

\begin{proof} The proof of 1)(i) is obvious. For the proof of (ii), we fix $x\in U$, and we find $i_0\in \{1,\dots,k\}$ such that $\min_{i=1}^kf_i(x)=f_{i_0}(x)$. Since $f_{i_0}$ is $\omega_{i_0}$-semi-concave, we can find a linear map $l_x:\R^n\to \R$ such that
$$
\forall y\in U, \quad f_{i_0}(y)-f_{i_0}(x)\leq l_x(y-x)+\lVert y-x\rVert\omega_{i_0}(\lVert y-x\rVert).
$$
It clearly follows that
$$
\forall y\in U, \quad \min_{i=1}^kf_{i}(y)-\min_{i=1}^kf_{i}(x)\leq l_x(y-x)+\lVert y-x\rVert\max_{i=1}^k\omega_{i}(\lVert y-x\rVert).
$$
To prove 2), consider an open convex subset $C$ with $\bar C$ compact and contained in $U$. By compactness of $\bar C$  and continuity of $x\mapsto d_xf$, we can find a modulus $\omega$, which is a modulus of continuity for the map $x\mapsto d_xf$ on $C$. The Mean Value Formula in integral form
$$
f(y)-f(x)=\int_0^1d_{tx+(1-t)y}f(y-x)\,dt,
$$
which is valid for every $y,x\in C$ implies that
$$
\forall x,y\in U, \quad f(y)-f(x)\leq d_xf(y-x)+ \lVert y-x\rVert\omega(\lVert y-x\rVert).
$$
Therefore $f$ is $\omega$-semi-concave in the open subset $C$.
\end{proof}

We now state and prove the first important consequences of the definition of semi-concavity.

\begin{lemma}
\label{semiconcaveimpliesLipschitz}{\sl Suppose $U$ is an open subset of $\R^n$.
Let $f:U \rightarrow \R$ be an $\omega$-semi-concave function. Then we have:
\begin{enumerate}
\item[(i)] for every compact subset $K\subset U$, we can find a constant $A$ such that
for every $x\in K$, and every linear form $l_x$ on $\R^n$ satisfying
$$\forall y\in U, \quad f(y)-f(x)\leq \< l_x,y-x\>+\lVert y-x\rVert\omega(\lVert y-x\rVert),$$
we have $\lVert l_x \rVert \leq A$;
\item[(ii)] the function
$f$ is locally Lipschitz.
\end{enumerate}}
\end{lemma}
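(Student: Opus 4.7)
My plan is to first establish (i) and deduce (ii) from it. The main subtlety in (i) is showing that $f$ is locally bounded (not just locally upper-bounded, which is nearly immediate); once $f$ is bounded on a small tubular neighborhood of $K$, the estimate $\lVert l_x\rVert \leq A$ drops out of probing the semi-concavity inequality in two opposite directions.

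Choose $r>0$ small enough that the closed $r$-neighborhood $K_r := \{x \in \R^n : d(x,K) \leq r\}$ is still a compact subset of $U$. For the upper bound, the semi-concavity inequality at any point $x_0 \in K_r$, evaluated for $y \in B(x_0, r/2)$, gives $f(y) \leq f(x_0) + \lVert l_{x_0}\rVert\,(r/2) + (r/2)\,\omega(r/2)$, so $f$ is bounded above on a ball around $x_0$. Since $K_r$ is compact, a finite cover yields a uniform bound $f \leq M$ on $K_r$.

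The lower bound is the crux. Applying the defining inequality at $z$ to $y_1$ and to $y_2$ and summing, the linear terms in $l_z$ cancel when $z = (y_1+y_2)/2$, leaving the midpoint inequality
\[
\frac{f(y_1)+f(y_2)}{2} - f\Bigl(\frac{y_1+y_2}{2}\Bigr) \leq \frac{\lVert y_1-y_2\rVert}{2}\,\omega\Bigl(\frac{\lVert y_1-y_2\rVert}{2}\Bigr).
\]
Taking $y_1 = y$ and $y_2 = 2x_0 - y$, so the midpoint is $x_0$, this becomes $f(y) \geq 2 f(x_0) - f(2x_0 - y) - 2r\,\omega(r)$ for $y \in B(x_0,r)$, and the upper bound on $f(2x_0-y)$ already established yields a lower bound for $f$ near $x_0$; a finite cover gives $f \geq m$ on $K_r$. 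I expect this step to be the main obstacle, since the pointwise definition alone does not obviously force $f$ to be bounded below, and one needs to exploit the convex-combination consequence of the semi-concavity inequality.

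With $m \leq f \leq M$ on $K_r$, (i) follows directly: for $x \in K$, a unit vector $v$, and any $l_x$ satisfying the semi-concavity inequality at $x$, plugging $y = x - rv$ gives $\langle l_x, v\rangle \leq (M-m)/r + \omega(r)$, and the same argument with $-v$ in place of $v$ gives $-\langle l_x, v\rangle \leq (M-m)/r + \omega(r)$; varying $v$ over the unit sphere yields $\lVert l_x\rVert \leq A := (M-m)/r + \omega(r)$. For (ii), if $x_1, x_2 \in K$ satisfy $\lVert x_1 - x_2\rVert \leq r$, the semi-concavity inequality at $x_1$ combined with (i) gives $f(x_2) - f(x_1) \leq (A + \omega(r))\lVert x_2 - x_1\rVert$; swapping $x_1$ and $x_2$ gives the reverse bound, and local Lipschitz continuity follows by applying this to a compact neighborhood of each point of $U$.
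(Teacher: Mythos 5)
Your argument is correct, and the only substantive departure from the paper's proof is in how you obtain the local lower bound on $f$ (which is indeed the crux). The paper takes a compact cube $C \subset U$, writes any $x \in C$ as a barycentric combination $x = \sum_i \alpha_i y_i$ of the $2^n$ vertices $y_i$, and applies the semi-concavity inequality at $x$ to each vertex; summing with weights $\alpha_i$ kills the linear terms because $\sum_i \alpha_i (y_i - x) = 0$, giving $f(x) \geq \min_i f(y_i) - D_C\,\omega(D_C)$ on the whole cube at once. You instead use the midpoint (reflection) inequality, taking $y_1, y_2$ symmetric about the evaluation point $z$: this is the same cancellation mechanism with only two points and weights $1/2$, and it yields a lower bound ball by ball, which you then patch together by a finite cover. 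Both routes are standard, and yours is arguably slightly more elementary since it avoids the dimension-dependent vertex decomposition; the paper's version has the minor advantage of producing the bound on the cube in a single step. Two small things you should tidy up: when you apply the midpoint inequality near $x_0 \in K_r$ you need $\overline{B}(x_0,\rho) \subset U$ and an upper bound for $f$ on that ball (so either pass to $K_{2r}$ with $K_{2r}$ compact in $U$, or shrink $\rho$ per point and use local upper-boundedness before the finite cover); and in (i) the one-sided estimate $\langle l_x, v\rangle \leq (M-m)/r + \omega(r)$ for all unit $v$ already bounds $\lVert l_x \rVert$, so probing in the direction $-v$ is redundant, though harmless. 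Otherwise steps (i) and (ii) match the paper's essentially verbatim.
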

\begin{proof} From the definition, it follows that a semi-concave function is locally bounded from above.
We now show that $f$ is also locally bounded from below.
Fix a (compact) cube $C$ contained in $U$ and let $\{ y_1, \ldots, y_{2^n}\}$ be the vertices of the cube.
Then, for each $x \in C$, we can write $x= \sum_i \a_i y_i$, with $\sum_i \a_i=1$.
By the semi-concavity of $f$ we have, for each $i=1,\ldots,2^n$,
$$
f(y_i) - f(x) \leq \< l_x,y_i-x\> + \lVert y_i-x\rVert \omega(\lVert y_i-x\rVert);
$$
multiplying by $\a_i$ and summing over $i$, we get
$$
\sum_i \a_i f(y_i) \leq f(x) + \sum_i \a_i \lVert y_i-x \rVert \omega(\lVert y_i-x \rVert) \leq f(x) + B,
$$
with $B=D_C\omega(D_C)$, where $D_C$ is the diameter of the compact cube $C$.
It follows that
$$
\forall x\in C, \quad f(x) \geq \min_i f(y_i) - B.
$$
We now know that $f$ is locally bounded. Using this fact, it is not
difficult to show (i). In fact, suppose that the closed ball $\bar B(x_0,2r), r<+\infty$, is contained in $U$.
For $x \in \bar B(x_0,r)$, we have $x - r v \in \bar B(x_0,2r)\subset U$
for each $v \in \R^n$ with $\lVert v \rVert=1$, and therefore
$$
f(x-rv) - f(x) \leq \< l_x,-rv\> +   \lVert -rv \rVert \omega(\lVert -rv \rVert) = -r\< l_x,v\> +r\omega(r) .$$
Since, by the compactness of $\bar B(x_0,2r)$, we already know that $\tilde B=\sup_{z\in \bar B(x_0,2r)} \lvert f(z)\rvert$ is finite,
this implies
$$
\< l_x,v\> \leq \frac{f(x) - f(x-rv)}{r} + \omega(r)
\leq \frac{2\tilde B}{r} + \omega(r).
$$
It follows that, for $x\in  \bar B(x_0,r)$,
$$\lVert l_x \rVert \leq \frac{2\tilde B}{r} + \omega(r).$$
Since the compact set $K\subset U$ can be covered by a finite numbers of balls $ \bar B(x_i,r_i)$, $i=1,\dots,\ell$, we obtain (i).

To prove (ii), we consider a compact subset $K\subset U$, and we apply (i) to obtain the constant $A$. We denote by $D_K$ the (finite) diameter of the compact set $K$. For each $x,y \in K$,
\begin{align*}
f(y) - f(x) &\leq \< l_x,y-x\> + \lVert y-x\rVert \omega(\lVert y-x\rVert)\\
& \leq
\left(\lVert l_x\rVert + \omega(D_K)\right)\lVert y-x\rVert\\
& \leq (A + \omega(D_K))\lVert y-x\rVert.
\end{align*}
Exchanging the role of $x$ and $y$, we conclude  that $f$ is Lipschitz on $K$.
\end{proof}
Let us recall that a Lipschitz real valued function defined on an open subset of a Euclidean space is differentiable almost everywhere
(with respect to the Lebesgue measure). Therefore by part (ii) of Lemma
\ref{semiconcaveimpliesLipschitz} above we obtain the following corollary:
\begin{corollary}{\sl A locally semi-concave real valued function defined on an open subset of a Euclidean space is differentiable almost everywhere with respect to the Lebesgue measure.}
\end{corollary}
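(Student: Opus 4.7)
The plan is to reduce the statement to Rademacher's theorem via the preceding lemma. First, I would unwind the hypothesis: local semi-concavity means that every point $x \in U$ admits an open neighborhood $V_x \subset U$ on which $f$ restricts to an $\omega_x$-semi-concave function for some modulus $\omega_x$. Applying part (ii) of Lemma \ref{semiconcaveimpliesLipschitz} to each such restriction yields that $f\vert_{V_x}$ is locally Lipschitz in $V_x$. Since these neighborhoods cover $U$, this shows that $f$ itself is locally Lipschitz on $U$.

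Next, I would invoke Rademacher's theorem: a locally Lipschitz real-valued function on an open subset of $\mathbb{R}^n$ is differentiable at $\mathscr{L}^n$-almost every point. To pass from local Lipschitz behavior to an a.e.\ statement on the whole of $U$, I would cover $U$ by a countable family of open balls $(B_k)_{k\in\mathbb{N}}$ with $\overline{B_k}\subset U$ on which $f$ is Lipschitz; such a countable cover exists because $\mathbb{R}^n$ is second countable and $f$ is locally Lipschitz. Rademacher's theorem, applied on each $B_k$, yields a Lebesgue-null set $N_k \subset B_k$ outside which $f$ is differentiable. Setting $N=\bigcup_k N_k$, the set $N$ is Lebesgue-null and $f$ is differentiable at every point of $U\setminus N$.

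There is no real obstacle here: the only nontrivial ingredient is Rademacher's theorem itself, which is classical and external to the paper. The proof thus reduces to citing Lemma \ref{semiconcaveimpliesLipschitz}(ii) together with Rademacher's theorem, plus a routine countable covering argument to globalize.
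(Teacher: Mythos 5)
Your proof is correct and follows exactly the paper's route: apply Lemma \ref{semiconcaveimpliesLipschitz}(ii) to conclude local Lipschitz regularity, then invoke Rademacher's theorem (the paper states this recall just before the corollary). The countable covering argument you spell out is the routine step the paper leaves implicit.
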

In fact, in the case of semi-concave functions there is a better result which
is given in  Theorem \ref{propdiffsemiconc} below,
whose proof can be found in \cite[Section 4.1]{cansin}.
Let us first give a definition:

\begin{definition}
\label{defcountLip}{\rm
We say that $E \subset \R^n$ is \textit{countably $(n-1)$-Lipschitz} if there exists a countable family of compact subsets $K_j$ is a compact subset of $\R^{n}$ such that:
\begin{enumerate}
\item $E$ is contained in $\cup_j K_j$;
\item
for each $j$ there exists a hyperplane $H_j \subset \R^n=H_j\oplus H_j^\perp$, where  $H_j^\perp$ is the Euclidean orthogonal of $H_j$, such that $K_j$ is contained in the graph of a Lipschitz function $f_j:A_j \to H_j^\perp$
defined on a compact subset $A_j\subset H_j$.
\end{enumerate}}
\end{definition}
Note that in the definition above, by the graph property (ii), the compact subset $K_j$ has finite $(n-1)$-dimensional Hausdorff measure.
Therefore any $(n-1)$-Lipschitz set is contained in a Borel (in fact $\s$-compact) $(n-1)$-Lipschitz set
with $\s$-finite $(n-1)$-dimensional Hausdorff measure.
\begin{theorem}
\label{propdiffsemiconc}{\sl If $\varphi:U\to \R$ is a semi-concave function defined on the open subset $U$ of $\R^n$, then
$\varphi$ is differentiable at each point in the complement of a Borel countably $(n-1)$-Lipschitz set.}
\end{theorem}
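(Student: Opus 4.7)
My plan is to show that the non-differentiability set
$$
S = \{x \in U : \varphi \text{ is not differentiable at } x\}
$$
is covered by countably many compact Lipschitz graphs over hyperplanes. Because $U$ is a countable union of open balls compactly contained in $U$, and on each such ball the Definition \ref{defsemicon} supplies a linear form $l_x \in D^+\varphi(x)$ satisfying the $\omega$-semi-concavity bound globally, I may localize. The essential preliminary step is the \emph{approximate monotonicity} of the superdifferential: writing the defining inequality at both $x$ and $y$ and adding yields
$$
\langle l_y - l_x,\, y - x\rangle \leq 2|y-x|\,\omega(|y-x|).
$$
A standard argument using that $D^+\varphi(x)$ is non-empty, compact and convex shows that $\varphi$ is differentiable at $x$ if and only if $D^+\varphi(x)$ is a singleton, so $S = \{x : \operatorname{diam} D^+\varphi(x) > 0\}$.

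Next I stratify $S$. Pick a finite family $v_1,\ldots,v_N \in S^{n-1}$ whose $\tfrac{1}{8}$-neighborhoods cover the sphere, and for positive integers $j,k$ and $i \in \{1,\ldots,N\}$ set
$$
S_{j,k,i} = \Bigl\{x : \operatorname{dist}(x,\partial U) \geq \tfrac{1}{j},\ |x|\leq j,\ \exists\, p,q \in D^+\varphi(x),\ |q-p|\geq \tfrac{1}{k},\ \tfrac{q-p}{|q-p|}\in B(v_i,\tfrac{1}{8})\Bigr\}.
$$
Then $S \subset \bigcup_{j,k,i} S_{j,k,i}$, a countable union, and the upper semicontinuity of $D^+\varphi$ with closed convex values makes each $S_{j,k,i}$ Borel. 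I claim that each $S_{j,k,i}$, intersected with a ball of sufficiently small radius $r_0 = r_0(j,k,i)$, lies in a Lipschitz graph over the hyperplane $H_i = v_i^\perp$. Given $x,y$ in the stratum, decompose $y-x = a v_i + w$ with $w \in H_i$, pick corresponding pairs $(p_x,q_x)$ and $(p_y,q_y)$, and apply the monotonicity estimate to $(p_x,p_y)$ and to $(q_x,q_y)$. Subtracting the two inequalities produces
$$
\langle (q_x - p_x) - (q_y - p_y),\, y - x\rangle \leq 4|y-x|\,\omega(|y-x|).
$$
Because both $q_x - p_x$ and $q_y - p_y$ have length at least $1/k$ and point within $1/8$ of $v_i$, the left-hand side is bounded below by $\tfrac{a}{2k}$ up to a controllable multiple of $|w|/k$; a symmetric argument with $x,y$ swapped controls $-a$, and together they force $|a| \leq C(j,k,i)\,|w|$ as soon as $|y-x|\leq r_0$ is small enough that $\omega(|y-x|)$ is negligible compared to $1/k$. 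Covering $S_{j,k,i}$ by countably many balls of radius $r_0/2$ exhibits it as a countable union of compact sets, each contained in the graph of a Lipschitz function from a compact subset of $H_i$ to $H_i^\perp = \R v_i$; taking the union over $(j,k,i)$ completes the construction.

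The main obstacle is making the Lipschitz-graph extraction precise with a general modulus $\omega$ instead of a linear one. For a linear modulus the error $|y-x|\omega(|y-x|)$ is quadratic and is absorbed effortlessly, so one gets a global Lipschitz graph on each $S_{j,k,i}$. For a general modulus one only has $\omega(t)\to 0$, so the absorption works on a small scale $r_0(j,k,i)$, and one must cover each $S_{j,k,i}$ by countably many balls of that radius -- a compact exhaustion handled by the indices $j$. The end result is a countable, Borel, locally compact family of Lipschitz graphs of dimension $n-1$ whose union contains $S$, which is precisely the countably $(n-1)$-Lipschitz set required by Definition \ref{defcountLip}.
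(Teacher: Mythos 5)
The paper does not prove this theorem; it is quoted from Cannarsa--Sinestrari \cite[Section 4.1]{cansin}. So I can only assess your proposal on its own terms, and it has a genuine gap at the crucial step.

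The step that fails is the derivation of the inequality
$$
\langle (q_x - p_x) - (q_y - p_y),\, y - x\rangle \leq 4|y-x|\,\omega(|y-x|).
$$
You obtain this by ``subtracting'' the two monotonicity estimates $\langle p_y - p_x, y-x\rangle \leq 2|y-x|\omega(|y-x|)$ and $\langle q_y - q_x, y-x\rangle \leq 2|y-x|\omega(|y-x|)$. But both are upper bounds in the same direction; from $A\leq C$ and $B\leq C$ one cannot conclude anything about $A-B$. The approximate monotonicity of $D^+\varphi$ is intrinsically one-sided (swapping the roles of $x$ and $y$ reproduces the same inequality, not its reverse), so no pairing of the four superdifferentials yields the two-sided control your argument needs. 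The subsequent step is also wrong for an independent reason: even granting the displayed inequality, the vector $(q_x-p_x)-(q_y-p_y)$ can be zero (take $q_x-p_x = q_y-p_y$), so its pairing with $y-x$ is certainly not bounded below by $a/(2k)$.

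More fundamentally, the claim that $S_{j,k,i}$ localizes to a \emph{single} Lipschitz graph over $H_i = v_i^\perp$ is false, and not just hard to prove by this route. Take $\varphi(x_1,x_2) = -|x_1| - |x_1-\varepsilon|$ on $\R^2$: it is concave, hence $\omega$-semi-concave with $\omega\equiv 0$. One computes $D^+\varphi(0,x_2) = [0,2]\times\{0\}$ and $D^+\varphi(\varepsilon,x_2) = [-2,0]\times\{0\}$, so with $v_i = e_1$ and $k\geq 1$ the two parallel lines $\{x_1=0\}$ and $\{x_1=\varepsilon\}$ both lie in $S_{j,k,i}$. The points $x=(0,0)$ and $y=(\varepsilon,0)$ satisfy $y-x = \varepsilon e_1$, i.e.\ $t=\varepsilon$ and $w=0$, so $|t|\leq C|w|$ is violated. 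Since $\omega\equiv 0$ here, any radius $r_0(j,k,i)$ your argument produces depends only on the modulus, hence is the same for all such $\varphi$; choosing $\varepsilon < r_0$ defeats the localization. The monotonicity inequalities are all satisfied in this example (they reduce to $b_1 - a_1 \leq 0$ for $a\in[0,2], b\in[-2,0]$), so there is no contradiction to extract.

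What is actually true is that $S_{j,k,i}$ is contained in a countable union of Lipschitz graphs, but this requires a finer decomposition: along each line parallel to $v_i$ there are at most finitely many points of $S_{j,k,i}$ (because each contributes a drop $\geq 1/k$ to the bounded, almost-decreasing one-sided derivative), and one must then enumerate these ``sheets'' and prove each one separately Lipschitz. This is essentially the content of the Cannarsa--Sinestrari/Zaj\'{\i}\v{c}ek-type argument the paper cites, and it is substantially more delicate than the single-graph picture your sketch relies on. Your stratification by direction cone, spread $1/k$, and compact exhaustion is in the right spirit, but the Lipschitz-graph extraction as written does not go through.
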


In order to extend the definition of locally semi-concave to functions defined on a manifold,
it suffices to show that this definition is stable by composition with diffeomorphisms.

\begin{lemma}{\sl
Let $U,V \subset \R^n$ be open subsets. Suppose that $F:V \to U$ is  a {\rm C}$^1$ map.
If $f:U \to \R$ is a locally semi-concave function then $f\circ F: V\to \R$ is also locally semi-concave.
Moreover, if $F$ is of class {\rm C}$^2$, and
$f:U \to \R$ is a locally semi-concave function with a linear modulus then $f\circ F: V\to \R$ is also locally semi-concave with a linear modulus.}
\end{lemma}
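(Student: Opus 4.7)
The plan is to fix an arbitrary $x_{0}\in V$, localize to a compact convex neighborhood of $x_{0}$, and derive the semi-concavity inequality for $f\circ F$ from the one for $f$ via a first-order Taylor expansion of $F$.

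First, using continuity of $F$ together with the local semi-concavity hypothesis on $f$, I choose a compact convex neighborhood $K\subset V$ of $x_{0}$ such that $F(K)$ is contained in an open subset $U'\subset U$ on which $f$ is $\omega$-semi-concave for some modulus $\omega$. Since $F\in C^{1}$, its differential $dF$ is continuous on the compact set $K$, so $F$ is Lipschitz on $K$ with some constant $L$, and $dF$ admits a uniform modulus of continuity $\omega_{F}$ on $K$. Crucially, applying Lemma \ref{semiconcaveimpliesLipschitz}(i) to the compact subset $F(K)\subset U'$ I obtain a constant $A$ that uniformly bounds $\lVert l_{z}\rVert$ as $z$ ranges over $F(K)$ and $l_{z}$ ranges over the linear forms realizing semi-concavity of $f$ at $z$.

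Second, for $x,y\in K$ the integral form of the mean value theorem gives
$$F(y)-F(x)=dF_{x}(y-x)+R(x,y),\qquad \lVert R(x,y)\rVert\leq \lVert y-x\rVert\,\omega_{F}(\lVert y-x\rVert).$$
Feeding $z=F(y)$ into the $\omega$-semi-concavity inequality for $f$ at $F(x)$ with the chosen $l_{F(x)}$ and grouping terms yields
$$f(F(y))-f(F(x))\leq \<dF_{x}^{*}(l_{F(x)}),\,y-x\>+\lVert y-x\rVert\,\tilde\omega(\lVert y-x\rVert),$$
where $\tilde\omega(r):=A\,\omega_{F}(r)+L\,\omega(Lr)$ is again a modulus (continuous, non-decreasing, vanishing at $0$). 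Hence $f\circ F$ is $\tilde\omega$-semi-concave on $K$, so locally semi-concave on $V$ since $x_{0}$ was arbitrary.

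For the linear-modulus refinement, when $F\in C^{2}$ the second-order Taylor formula upgrades the remainder uniformly on $K$ to $\lVert R(x,y)\rVert\leq K'\,\lVert y-x\rVert^{2}$; combined with $\omega(r)=kr$ this replaces $\tilde\omega$ by the linear modulus $(AK'+kL^{2})\,r$, proving the second assertion. The one non-routine step is the uniform bound $\lVert l_{z}\rVert\leq A$ for $z\in F(K)$, which is exactly Lemma \ref{semiconcaveimpliesLipschitz}(i); given this uniform bound, the rest is direct estimation with Taylor remainders and moduli.
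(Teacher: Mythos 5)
Your proof is correct and follows essentially the same route as the paper's: localize to a compact convex set, use the uniform bound on the subgradients $l_{F(x)}$ from Lemma \ref{semiconcaveimpliesLipschitz}(i), expand $F$ to first order with a modulus-of-continuity remainder, and absorb the two error terms into the modulus $\tilde\omega(r)=A\,\omega_F(r)+L\,\omega(Lr)$. The only cosmetic difference is in the $C^2$ case, where you invoke a quadratic Taylor remainder while the paper notes that $DF$ is locally Lipschitz so $\omega_F$ may be taken linear; these are equivalent.
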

\begin{proof} Since the nature of the result is local, without loss of generality we can assume that
$f:U \to \R$ is semi-concave with modulus $\o$. We now show that, for every $V'$ convex open subset
whose closure $\bar V'$ is compact and contained in $V$,
the restriction $f \circ F|_{V'}:V' \rightarrow \R$ is a semi-concave function. We set
$C_{\bar V'}= \max_{z\in \bar V'}\lVert D_zF\rVert$,  and we denote by $\hat\omega_{\bar V'}$ a modulus
of continuity for the continuous function
$z\mapsto D_zF$ on the compact subset $\bar V'$.

For each $x,y $ in the compact convex subset $\bar V'\subset V$, we have
\begin{align*}
f(F(y)) - f(F(x)) & \leq \< l_{F(x)},F(y)-F(x)\> + \lVert F(y)-F(x)\rVert \omega(\lVert F(y)-F(x)\rVert)\\
&\leq \< l_{F(x)},D F(x)(y-x)\> + \lVert l_{F(x)}\rVert \hat\omega_{\bar V'}(\lVert y-x\rVert) \lVert y-x\rVert\\
&+ C_{\bar V'}\lVert y-x\rVert \omega(C_{\bar V'}\lVert y-x\rVert);
\end{align*}
Since $F(\bar V')$ is a compact subset of $U$ we can apply part (i) of Lemma \ref{semiconcaveimpliesLipschitz} to obtain that
$\tilde C_{\bar V'}=\sup_{\bar V'}\lVert l_{F(x)}\rVert$ is finite. This implies that $f \circ F$ on $V'$ is semi-concave
with the modulus
$$\tilde \o(r)=\tilde C_{\bar V'}\hat\o_{\bar V'}(r) + C_{\bar V'} \omega(C_{\bar V'}r).$$
If $F$ is {\rm C}$^2$, then its derivative $DF$ is locally Lipschitz on $U$, and we  can assume that $\hat\omega_{\bar V'}$ is a linear modulus. Therefore, if $\omega$ is a linear modulus, we obtain that $\tilde \omega$ is also a linear modulus.
\end{proof}

Thanks to the previous lemma, we can define a locally semi-concave function (resp.\ a locally semi-concave function for a linear modulus) on a manifold as a function whose restrictions to charts is, when computed in coordinates, locally semi-concave (resp.\ locally semi-concave for a linear modulus). Moreover, it suffices to check this locally semi-concavity in charts for a family of charts whose domains of definition cover the manifold.
It is not difficult to see that Theorem \ref{propdiffsemiconc} is valid on any (second countable) manifold, since we can cover such a manifold by the domains of definition of a countable family of charts.

Now we want to introduce the notion of uniformly semi-concave family of functions.
\begin{definition}{\rm
Let $f_i:U \rightarrow \R$, $i \in I$, be a family of functions defined on an open subset $U$ of $\R^n$.
We will say that the family $(f_i)_{i \in I}$ is \textit{uniformly $\omega$-semi-concave}, where $\omega$ is a modulus of continuity,
if each $f_i$ is $\omega$-semi-concave. We will say that the family $(f_i)_{i \in I}$ is \textit{uniformly semi-concave} if there exists a
modulus of continuity $\omega$ such that the family $(f_i)_{i \in I}$ is uniformly $\omega$-semi-concave.
We will say that the family $(f_i)_{i \in I}$ is \textit{uniformly semi-concave with a linear modulus}, if it is uniformly $\omega$-semi-concave, with $\omega$ of the form $t\mapsto kt$, where $k$ is a fixed constant. }
\end{definition}

\begin{theorem}\label{InfSemiConcave}{\sl
Suppose that $f_i:U \rightarrow \R$, $i \in I$, is a family of functions defined on an open subset $U$ of $\R^n$.
Suppose that this family $(f_i)_{i \in I}$ is uniformly $\omega$-semi-concave, where $\omega$ is a modulus of continuity.
If the function
$$
f(x) = \inf_{i \in I} f_i(x)
$$
is finite everywhere on $U$, then $f:U \rightarrow \R$ is also $\omega$-semi-concave.}
\end{theorem}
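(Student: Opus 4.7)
The plan is to verify the defining inequality for $\omega$-semi-concavity pointwise: fix $x \in U$ and produce a linear form $l_x$ on $\R^n$ such that $f(y) - f(x) \leq \langle l_x, y - x\rangle + \|y - x\| \omega(\|y - x\|)$ for every $y \in U$. The natural idea is to take a minimizing sequence $(i_n) \subset I$ with $f_{i_n}(x) \to f(x)$, use uniform semi-concavity to obtain linear forms $l^{i_n}_x$ witnessing the inequality for each $f_{i_n}$, show that these linear forms are uniformly bounded, and then extract a convergent subsequence and pass to the limit.

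First I would establish that $f$ is locally bounded from below, essentially by the cube argument from the proof of Lemma \ref{semiconcaveimpliesLipschitz}. Given a compact cube $C \subset U$ with vertices $\{y_1, \ldots, y_{2^n}\}$ and any $z = \sum_j \alpha_j y_j \in C$ with $\alpha_j \geq 0$, $\sum_j \alpha_j = 1$, applying the $\omega$-semi-concavity of a single $f_i$ at $z$ and summing against $\alpha_j$ gives
$$
f_i(z) \geq \sum_j \alpha_j f_i(y_j) - D_C \omega(D_C) \geq \min_j f(y_j) - D_C \omega(D_C),
$$
where the constant is independent of $i$. Taking the infimum over $i$ shows $f$ is bounded below on $C$.

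Next, with $x$ fixed, choose $i_n$ with $f_{i_n}(x) \leq f(x) + 1/n$, and let $l^{i_n}_x$ be a linear form witnessing $\omega$-semi-concavity of $f_{i_n}$ at $x$:
$$
f_{i_n}(y) \leq f_{i_n}(x) + \langle l^{i_n}_x, y - x\rangle + \|y - x\| \omega(\|y - x\|) \qquad \forall y \in U. \tag{$\ast$}
$$
Choose $r > 0$ small enough that $\bar B(x, r) \subset U$. For any unit vector $v \in \R^n$, applying $(\ast)$ at $y = x - rv$ and using $f_{i_n}(x - rv) \geq f(x - rv)$ together with the local lower bound on $f$ and the upper bound $f_{i_n}(x) \leq f(x) + 1$ yields
$$
\langle l^{i_n}_x, v \rangle \leq \frac{f_{i_n}(x) - f_{i_n}(x - rv)}{r} + \omega(r) \leq \frac{f(x) + 1 - \inf_{\bar B(x,r)} f}{r} + \omega(r),
$$
a bound independent of $n$ and $v$. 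Hence $\|l^{i_n}_x\|$ is bounded, and we may extract a subsequence (not relabelled) with $l^{i_n}_x \to l_x$ in $\R^n$.

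Finally, for arbitrary $y \in U$, using $f(y) \leq f_{i_n}(y)$ in the left-hand side of $(\ast)$ and $f_{i_n}(x) \to f(x)$ in the right-hand side, passing to the limit along the subsequence gives
$$
f(y) \leq f(x) + \langle l_x, y - x\rangle + \|y - x\| \omega(\|y - x\|),
$$
which is the required inequality at $x$. Since $x \in U$ was arbitrary, $f$ is $\omega$-semi-concave on $U$. The only real subtlety is the uniform bound on $\|l^{i_n}_x\|$; everything else is a limiting argument. That bound depends on $f$ being locally bounded below, and this is precisely what the cube argument above provides using only the common modulus $\omega$ and the finiteness of $f$.
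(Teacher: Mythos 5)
Your proposal is correct and follows essentially the same route as the paper's proof: establish a uniform local lower bound via the cube argument from Lemma \ref{semiconcaveimpliesLipschitz}, use it together with the near-minimality of $f_{i_n}(x)$ to bound the witnessing linear forms, extract a convergent subsequence, and pass to the limit in the semi-concavity inequality. The only cosmetic difference is your use of $y = x - rv$ versus the paper's $y = x_0 + \e v$ when bounding $\lVert l^{i_n}_x \rVert$.
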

\begin{proof}
Fix $x_0 \in U$. We can find a sequence $i_n$ such that $f_{i_n}(x_0)\searrow f(x_0)>-\infty$.
We choose a cube $C \subset U$ with center $x_0$. Call $y_1, \ldots, y_{2^n}$ the vertices of $C$.
By the argument in the beginning of the proof of Lemma \ref{semiconcaveimpliesLipschitz}, we have
$$
\forall x \in C, \ \forall i \in I, \quad
\min_{1 \leq j \leq 2^n} f_i(y_j) \leq f_i(x_0) + D_C \omega(D_C),
$$
where $D_C$ is the diameter of the compact cube $C$. Using the fact that $f(y_j)=\inf_{i \in I}f_i(y_j)$ is finite,
it follows that there exists $A \in \R$ such that
$$
\forall x \in C, \ \forall i \in I, \quad f_i(x) \geq A.
$$
Choose now $\e > 0$ such that $\bar B(x_0,\e) \subset C$. If $l_i : \R^n \rightarrow \R$ is a linear form such that
$$
\forall y \in U, \quad f_i(y) \leq  f_i(x_0) + \< l_i,y-x_0\> + \lVert y-x_0\rVert \omega(\lVert y-x_0\rVert),
$$
we obtain that, for every $v \in \R^n$ of norm $1$,
$$
A \leq f_i(x_0) + \< l_i,\e v\> + \e \omega(\e).
$$
Since $f_{i_n}(x_0)\searrow f(x_0)$, we can assume $f_{i_n}(x_0) \leq M<+\i$ for all $n$, that implies
$$
\lVert l_{i_n} \rVert \leq \frac{M-A}{\e} + \o(\e) <+\infty.
$$
Up to extracting a subsequence, we can assume $l_{i_n} \rightarrow l$ in $\R^{n*}$,
the dual space of $\R^n$.
Then, as for every $y \in U$ we have $f(y) \leq f_{i_n}(y)$, passing to the limit in $n$ in the inequality
$$
f(y) \leq f_{i_n}(x_0) + \< l_{i_n},y-x_0\> + \lVert y-x_0\rVert \omega(\lVert y-x_0\rVert),
$$
we get
$$
f(y) \leq f(x_0) + \< l,y-x_0\> + \lVert y-x_0\rVert \omega(\lVert y-x_0\rVert).
$$
Since $x_0 \in U$ is arbitrary, this concludes the proof.
\end{proof}

Before generalizing the notion of uniformly semi-concave family of functions to manifolds,
let us look at the following example.

\begin{example}{\rm
For $k \in \R$, define $f_k:\R \rightarrow \R$ as $f_k(x)=kx$.
It is clear that the family $(f_k)_{k \in \R}$ is $\omega$-semi-concave for every modulus of continuity $\omega$.
In fact
$$
f_k(y)-f_k(x)=k(y-x) \leq k(y-x) + |y-x| \o(|y-x|),
$$
since $\omega \geq 0$.
Consider now the diffeomorphism $\varphi : \R^*_+ \rightarrow \R^*_+$, $\varphi(x)=x^2$.
Then there does not exist a non-empty open subset $U \subset \R^*_+$, and a modulus of continuity $\omega$,
such that the family $(f_k \circ \varphi|_{U})_{k \in \R}$ is (uniformly) $\omega$-semi-concave.
Suppose in fact, by absurd, that
$$
f_k\circ \varphi(y)-f_k\circ \varphi(x) \leq l_x(y-x) + |y-x| \o(|y-x|),
$$
where $l_x$ depends on $k$ but not $\omega$.
Since $f_k\circ \varphi$ is differentiable we must have $l_x(y-x)=(f_k\circ \varphi)'(x)(y-x)=2kx(y-x)$.
Therefore we should have
$$
ky^2 - kx^2 \leq 2kx(y-x) + |y-x| \o(|y-x|).
$$
Fix $x,y \in U$, with $y \neq x$ and set $h=y-x$. Then
$$
kh^2\leq |h| \o(|h|) \Rightarrow k \leq \frac{\o(|h|)}{|h|} \ \ \ \forall k,
$$
that is obviously absurd.}
\end{example}

Therefore the following is the only reasonable definition for the notion of a uniformly locally semi-concave
family of functions on a manifold.

\begin{definition}{\rm
We will say that the family of functions $f_i:M \rightarrow \R$, $i \in I$, defined on the manifold $M$, is
\textit{uniformly locally semi-concave} (resp.\ \textit{with a linear modulus}), if we can find a cover $(U_j)_{j \in J}$ of $M$ by open subsets, with
each $U_j$ domain of a chart $\varphi_j:U_j \stackrel{\sim}{\longrightarrow} V_j \subset \R^n$ (where $n$ is the
dimension of $M$), such that for every $j \in J$ the family of functions $(f_i \circ \varphi_j^{-1})_{i \in I}$
is a uniformly semi-concave family of functions on the open subset $V_j$ of $\R^n$
(resp. with a linear modulus).}
\end{definition}

The following corollary is an obvious consequence of Theorem \ref{InfSemiConcave}.\begin{corollary}\label{corstability}{\sl
If the family $f_i:M \rightarrow \R$, $i \in I$ is uniformly locally semi-concave (resp.\ with a linear modulus) and the function
$$
f(x) = \inf_{i \in I} f_i(x)
$$
is finite everywhere, then $f:M \rightarrow \R$ is locally semi-concave (resp.\ with a linear modulus).}
\end{corollary}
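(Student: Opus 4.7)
The plan is to reduce the statement to a chart-by-chart application of Theorem \ref{InfSemiConcave}. By the definition of ``uniformly locally semi-concave'' on a manifold, there exists a cover $(U_j)_{j\in J}$ of $M$ by open sets, each the domain of a chart $\varphi_j:U_j\to V_j\subset\R^n$, such that for every $j\in J$ the family $(f_i\circ\varphi_j^{-1})_{i\in I}$ is uniformly $\omega_j$-semi-concave on $V_j$ for some modulus $\omega_j$ (and $\omega_j$ may be taken linear in the linear-modulus case). To prove $f$ is locally semi-concave, it is enough by definition to check that $f\circ\varphi_j^{-1}:V_j\to\R$ is locally semi-concave (resp.\ with a linear modulus) for each $j$.

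Fix such a $j$. Since the infimum commutes with precomposition, we have
$$
f\circ\varphi_j^{-1}(y)=\inf_{i\in I}\bigl(f_i\circ\varphi_j^{-1}\bigr)(y)\qquad\forall y\in V_j,
$$
and this infimum is finite everywhere on $V_j$ because $f$ is finite everywhere on $M$ by hypothesis. The hypothesis also says that the family $(f_i\circ\varphi_j^{-1})_{i\in I}$ is uniformly $\omega_j$-semi-concave on the open subset $V_j\subset\R^n$. Thus all the assumptions of Theorem \ref{InfSemiConcave} are met, and we conclude that $f\circ\varphi_j^{-1}$ is $\omega_j$-semi-concave on $V_j$. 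In particular it is locally semi-concave there; in the linear-modulus variant, $\omega_j$ is linear by the choice above, so $f\circ\varphi_j^{-1}$ is semi-concave with a linear modulus on $V_j$.

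Since the $U_j$ cover $M$, the function $f$ satisfies the chart condition of local semi-concavity (resp.\ with a linear modulus) at every point, which finishes the proof. There is no real obstacle here: the content is entirely in Theorem \ref{InfSemiConcave}, and the corollary is just the observation that the manifold definitions were set up precisely so that local semi-concavity, uniform local semi-concavity, and the linear-modulus variants all transfer through charts in the same way. The only thing one needs to be slightly careful about is that the modulus $\omega_j$ may depend on the chart $U_j$, but this is harmless since local semi-concavity on $M$ is itself a local (chart-by-chart) notion.
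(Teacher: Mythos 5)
Your proof is correct and is exactly the argument the paper has in mind (it gives no proof, calling the corollary "an obvious consequence of Theorem \ref{InfSemiConcave}"): unwind the manifold definition of uniformly locally semi-concave to get covering charts in which the pushed-forward family is uniformly $\omega_j$-semi-concave, apply Theorem \ref{InfSemiConcave} in each chart, and invoke the paper's remark that local semi-concavity need only be checked on a covering family of charts. The observation that the modulus $\omega_j$ may vary with the chart and that this is harmless is the right thing to note.
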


\begin{definition}{\rm
Suppose $c:M \times N \rightarrow \R$ is a function defined on the product of the manifold $M$ by the topological space $N$.
We will say that the family of functions $(c(\cdot,y))_{y \in N}$ is \textit{locally uniformly locally semi-concave}
(resp.\ \textit{with a linear modulus)}, if for each
$y_0 \in N$ we can find a neighborhood $V_0$ of $y_0$ in $N$ such that the family $(c(\cdot,y))_{y \in V_0}$
is uniformly locally semi-concave on $M$ (resp.\ with a linear modulus).}
\end{definition}
\begin{proposition}\label{InfSemiConcaveSurCompact}{\sl Suppose $c:M \times N \rightarrow \R$ is a function defined on the product of the manifold $M$ by the  topological space $N$, such that the family of functions $(c(\cdot,y))_{y \in N}$ is locally uniformly locally semi-concave (resp.\ with a linear modulus). If $K\subset N$ is compact,
and the function
$$
f_K(x) =\inf_{y\in K}c(x,y)
$$
is finite everywhere on $U$, then $f_K:U\to \R$
is locally semi-concave on M (resp.\ with a linear modulus).}
\end{proposition}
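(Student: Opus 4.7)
The strategy is to exploit the compactness of $K$ to reduce the uncountable family $(c(\cdot,y))_{y\in K}$ to finitely many families each of which is uniformly locally semi-concave, and then invoke Theorem \ref{InfSemiConcave}. Fix an arbitrary $x_0\in M$. By the locally uniform local semi-concavity hypothesis, for each $y\in K$ there is an open neighborhood $V_y\subset N$ of $y$ such that the family $(c(\cdot,z))_{z\in V_y}$ is uniformly locally semi-concave on $M$ (with a linear modulus in the respective case). By compactness of $K$, extract a finite subcover $V_{y_1},\ldots,V_{y_\ell}$ of $K$.

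Next, I would produce a single chart $\varphi:U\xrightarrow{\sim}\varphi(U)\subset\R^n$ around $x_0$ and moduli $\omega_1,\ldots,\omega_\ell$ such that, for each $j$, the family $(c(\cdot,z)\circ\varphi^{-1})_{z\in V_{y_j}}$ is uniformly $\omega_j$-semi-concave on $\varphi(U)$. The definition of uniformly locally semi-concave on $M$ gives for each $j$ separately some chart around $x_0$ with this property; transferring to a common chart $\varphi$ and shrinking $U$ if necessary is done by the uniform version of the composition lemma already proved, combined with part (i) of Lemma \ref{semiconcaveimpliesLipschitz} to keep the linear forms $l_{F(x)}$ bounded uniformly in $z\in V_{y_j}$. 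Setting $\omega=\max_{1\leq j\leq\ell}\omega_j$—which is again a modulus, and remains linear whenever each $\omega_j$ is linear (since $\max_j k_j t$ is linear)—and using $K\subset\bigcup_j V_{y_j}$, the full family $(c(\cdot,y)\circ\varphi^{-1})_{y\in K}$ is then uniformly $\omega$-semi-concave on $\varphi(U)$.

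Finally, since $f_K\circ\varphi^{-1}=\inf_{y\in K}c(\cdot,y)\circ\varphi^{-1}$ is finite everywhere on $\varphi(U)$ by hypothesis, Theorem \ref{InfSemiConcave} applied to this family yields that $f_K\circ\varphi^{-1}$ is $\omega$-semi-concave on $\varphi(U)$. Hence $f_K$ is semi-concave at $x_0$ in the chart $\varphi$, with a linear modulus in the respective case. Since $x_0\in M$ was arbitrary, $f_K$ is locally semi-concave on $M$ (resp.\ with a linear modulus).

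The only delicate step is the second one: extracting a single chart at $x_0$ and a common modulus that works simultaneously for all $\ell$ families. Once that is set up, the remaining conclusion is a direct application of Theorem \ref{InfSemiConcave} (together with part (ii) of Proposition \ref{EasyPropSemiConcave} for the stability of linearity under the finite $\max$). I expect no other difficulties, since compactness of $K$ plus the definitions make the scheme essentially forced.
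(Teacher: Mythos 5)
Your approach diverges from the paper's in a way that opens a genuine gap. You propose to first merge all the families $(c(\cdot,z))_{z\in V_{y_j}}$, $j=1,\ldots,\ell$, into a single family that is uniformly $\omega$-semi-concave in one common chart $\varphi$ around $x_0$, and only then apply Theorem~\ref{InfSemiConcave} once. The problematic step is exactly the one you flag as ``delicate'': transferring each family $(c(\cdot,z)\circ\varphi_{j'}^{-1})_{z\in V_{y_j}}$ from its own chart $\varphi_{j'}$ to the common chart $\varphi$ while retaining a \emph{uniform} modulus. The composition lemma produces a new modulus of the form $\tilde\omega(r)=\tilde C\,\hat\omega(r)+C\,\omega(Cr)$, where $\tilde C=\sup\lVert l_{F(x)}\rVert$ over the relevant compact set. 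For a single function this $\tilde C$ is finite by Lemma~\ref{semiconcaveimpliesLipschitz}(i), but that lemma's constant depends on $\sup|f|$ over a compact neighborhood. To get a modulus that works simultaneously for all $z$ in $V_{y_j}$, you would therefore need $\sup_{z}\sup_{x\in\bar V'}\lvert c(x,z)\rvert<\infty$. The hypotheses give you a \emph{lower} bound (from the finiteness of $f_K$, exactly as in the proof of Theorem~\ref{InfSemiConcave}), but no uniform \emph{upper} bound on the family: nothing prevents $\sup_{z}c(x_0,z)=+\infty$, since $c$ is not assumed continuous, or even locally bounded, in the $N$-variable (the constant family $f_n\equiv n$ shows that uniform semi-concavity plus a uniform lower bound does not yield a uniform upper bound). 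So the claimed appeal to Lemma~\ref{semiconcaveimpliesLipschitz}(i) ``to keep the linear forms $l_{F(x)}$ bounded uniformly in $z$'' does not go through.

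The paper avoids this by reversing the order of operations. For each $j$, it applies Corollary~\ref{corstability} (i.e.\ Theorem~\ref{InfSemiConcave}) \emph{in the chart $\varphi_{j'}$ where the hypothesis actually lives}, producing a single locally semi-concave function $f_j(x)=\inf_{y\in K\cap V_j}c(x,y)$ (finite since $f_K\leq f_j\leq c(\cdot,y_0)$ for any fixed $y_0\in K\cap V_j$). Once each $f_j$ is a single function, transferring it to any chart around $x_0$ is unproblematic — the bound on its $l$'s is just the ordinary Lemma~\ref{semiconcaveimpliesLipschitz}(i), no uniformity over a family needed. Finally, $f_K=\min_{1\leq j\leq\ell}f_j$ is a finite minimum, handled by Proposition~\ref{EasyPropSemiConcave}(ii) in a common chart, with modulus $\max_j\omega_j$ (still linear in the linear-modulus case). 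Your last paragraph already contains all of these ingredients; you simply need to apply the infimum theorem to each $V_{y_j}$-piece \emph{before} changing charts, rather than trying to assemble one uniformly semi-concave family in a single chart first.
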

\begin{proof} By compactness of $K$, we can find a finite family $V_i,i=1,\dots,\ell$ of open subsets of $N$ such that $K\subset \cup_{i=1}^\ell V_i$, and for every $i=1,\dots,\ell$, the family $(c(\cdot,y))_{y \in V_i}$ is locally uniformly locally semi-concave (resp.\ with a linear modulus).
The function
$$
f_{i}(x) =\inf_{y\in K\cap V_i}c(x,y)
$$
is finite everywhere on $U$, because $f_{i}\geq f_{K}$. It follows from Corollary
\ref{corstability} that $f_{i}$ is locally semi-concave on M (resp.\ with a linear
 modulus), for $i=1,\dots,\ell$. Since $f_{K}=\min_{i=1}^\ell f_{i}$, we can apply part (ii)
 of Proposition \ref{EasyPropSemiConcave} to conclude that $f_K$ has the same property.
\end{proof}
\begin{proposition}\label{uniformlocalsemiconcave}{\sl If $c:M \times N \rightarrow \R$ is a locally semi-concave function (resp.\ with a linear modulus) on the product of the manifolds $M$ and $N$, then
the family of functions on $M$ $(c(\cdot,y))_{y \in N}$ is locally uniformly locally semi-concave (resp.\ with a linear modulus).}
\end{proposition}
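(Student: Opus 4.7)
The plan is to reduce to local coordinates on $M\times N$ and then simply freeze the $N$-variable in the joint semi-concavity inequality.

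First I would note that both notions involved are local, so by taking charts on $M$ and $N$ we may assume $M\subset \R^n$ and $N\subset \R^m$ are open subsets of Euclidean spaces, and that $c:M\times N\to\R$ is locally semi-concave as a function of $(x,y)$ in the sense of Definition \ref{defsemicon}. Fix $y_0\in N$. By assumption, there exists an open neighborhood $W$ of $(x_0,y_0)$ in $M\times N$ on which $c$ is $\omega$-semi-concave for some modulus $\omega$; shrinking $W$ if necessary, we may assume $W=U\times V$ for open sets $U\subset M$ and $V\subset N$, with $U$ a neighborhood of $x_0$ and $V$ a neighborhood of $y_0$.

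The core step: by the definition of semi-concavity on $U\times V$, for each $(x,y)\in U\times V$ there exists a linear form $L_{(x,y)}:\R^n\times\R^m\to\R$ such that
$$
c(x',y')-c(x,y)\leq L_{(x,y)}(x'-x,y'-y)+\lVert(x'-x,y'-y)\rVert\,\omega(\lVert(x'-x,y'-y)\rVert)
$$
for all $(x',y')\in U\times V$. Setting $y'=y$ and defining the linear form $l_{x,y}:\R^n\to\R$ by $l_{x,y}(v)=L_{(x,y)}(v,0)$, one obtains
$$
c(x',y)-c(x,y)\leq l_{x,y}(x'-x)+\lVert x'-x\rVert\,\omega(\lVert x'-x\rVert),
$$
for every $x,x'\in U$, which is exactly the statement that $c(\cdot,y)$ is $\omega$-semi-concave on $U$, with the same modulus $\omega$ for every $y\in V$. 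Thus the family $(c(\cdot,y))_{y\in V}$ is uniformly $\omega$-semi-concave on $U$; since this holds in a chart around every point of $M$, the family $(c(\cdot,y))_{y\in V}$ is uniformly locally semi-concave on $M$. Since $V$ is a neighborhood of the arbitrary point $y_0\in N$, this is precisely the definition of local uniform local semi-concavity of $(c(\cdot,y))_{y\in N}$.

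For the linear modulus case, there is essentially nothing to add: if $\omega(t)=kt$, then the restriction used above is the same linear modulus, so the whole argument yields uniform local semi-concavity with a linear modulus. I do not foresee any real obstacle; the only point requiring a small amount of care is the initial reduction to a product neighborhood $U\times V$, which is automatic since $M\times N$ carries the product topology and every neighborhood of $(x_0,y_0)$ contains one of the form $U\times V$.
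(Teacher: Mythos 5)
Your proof follows exactly the paper's approach: work in product charts, then restrict the joint semi-concavity inequality on $U\times V$ to the slice $y'=y$, so that the linear form $L_{(x,y)}$ restricted to $\R^n\times\{0\}$ exhibits $c(\cdot,y)$ as $\omega$-semi-concave on $U$ with the same modulus for all $y\in V$. This slicing computation is precisely what the paper leaves implicit with ``it is then clear that the family ... is uniformly locally $\omega_{i,j}$-semi-concave,'' so on the substance you and the paper are doing the same thing. There is, however, one step where you are being slightly more cavalier than the paper: you fix $y_0$, pick a single base point $x_0$, obtain a product neighborhood $U\times V$ of $(x_0,y_0)$, and then assert that ``since this holds in a chart around every point of $M$, the family $(c(\cdot,y))_{y\in V}$ is uniformly locally semi-concave on $M$.'' But the neighborhood $V$ was produced \emph{at} $(x_0,y_0)$; as you move the base point $x_1$ over (possibly non-compact) $M$, the product neighborhood you obtain at $(x_1,y_0)$ may be $U_1\times V_1$ with $V_1\not\supset V$, and the argument gives no control of $c(\cdot,y)$ near $x_1$ for $y\in V\setminus V_1$. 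The definition of ``uniformly locally semi-concave'' requires one fixed $V_0\ni y_0$ for which a single cover of all of $M$ works. The paper sidesteps this by asserting from the outset a cover of $M\times N$ of product-grid form $(U_i\times W_j)_{i\in I,\,j\in J}$ with $c$ semi-concave on each $\tilde U_i\times\tilde W_j$; then, for a fixed $j$, the $W_j$-slab is paired simultaneously with \emph{every} $U_i$, and the quantifier order comes out right. If you want to present this proof you should either (a) state the product cover in that grid form up front, as the paper does, or (b) explicitly acknowledge that you are producing, for each $x_1\in M$, a possibly shrinking $V_{x_1}\ni y_0$, and then explain why a common $V_0$ can be extracted. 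As written, the last sentence before ``For the linear modulus case...'' elides the point.
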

\begin{proof}
We can cover $M \times N$ by a family $(U_i \times W_j)_{i \in I,j \in J}$ of open sets with $U_i$ open in $M$, $W_j$ open in $N$,
where $U_i$ is the domain of a chart $\varphi_i:U_i \stackrel{\sim}{\longrightarrow} \tilde U_i \subset \R^n$ (where $n$ is the
dimension of $M$), and $W_j$ is the domain of a chart $\psi_j:W_j \stackrel{\sim}{\longrightarrow} \tilde W_j \subset \R^m$ (where $m$ is the
dimension of $M$), and such that
$$
(\tilde x,\tilde y) \mapsto c\left(\varphi_i^{-1}(\tilde x),\psi_j^{-1}(\tilde y)\right)
$$
is $\omega_{i,j}$-semi-concave on $\tilde U_i \times \tilde W_j$, for some modulus $\omega_{i,j}$. It is then clear that the family
$$
(c(\varphi_i^{-1}(\tilde x),\psi_j^{-1}(\tilde y)))_{\tilde y \in \tilde W_j}
$$
is uniformly locally $\omega_{i,j}$-semi-concave on $\tilde U_i$.
\end{proof}
The following corollary is now an obvious consequence of Propositions
 \ref{uniformlocalsemiconcave} and \ref{InfSemiConcaveSurCompact}.
\begin{corollary}\label{coruniformlocalsemiconcave}{\sl Suppose $c:M \times N \rightarrow \R$ is a locally semi-concave function (resp.\ with a linear modulus) on the product of the manifolds $M$ and $N$. Let $K$ be a compact subset of $N$. If the function
$$
f_K(x) =\inf_{y\in K} c(x,y)
$$
is finite everywhere on $U$, then $f_K:U\to\R$ is locally uniformly locally semi-concave (resp.\ with a linear modulus).}
\end{corollary}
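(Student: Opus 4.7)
The plan is to simply chain together the two propositions cited right before the corollary, so the proof reduces to verifying that their hypotheses match. First I would invoke Proposition \ref{uniformlocalsemiconcave}: starting from the assumption that $c:M\times N\to\R$ is locally semi-concave on the product (respectively, with a linear modulus), this proposition directly yields that the family of slices $(c(\cdot,y))_{y\in N}$ is locally uniformly locally semi-concave on $M$ (respectively, with a linear modulus). This is the ``unfreezing the second variable'' step and is exactly what Proposition \ref{uniformlocalsemiconcave} is built to deliver.

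Next I would feed this family into Proposition \ref{InfSemiConcaveSurCompact}. The hypotheses of that proposition are precisely: the family $(c(\cdot,y))_{y\in N}$ is locally uniformly locally semi-concave (which we have just obtained), $K\subset N$ is compact (given), and the infimum $f_K(x)=\inf_{y\in K}c(x,y)$ is finite everywhere (given). The conclusion is that $f_K$ is locally semi-concave on $M$ (with a linear modulus in the linear-modulus case). This is the content we want.

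Both implications preserve the ``linear modulus'' qualifier, since each of the two propositions is formulated with a \emph{resp.\ with a linear modulus} clause, so the two cases of the corollary follow simultaneously from one and the same chain of invocations. I do not expect any genuine obstacle: there is no delicate estimate to redo, no measurability issue, and no need to re-examine the proofs of the two propositions — the compactness of $K$, which is the only ``new'' ingredient compared to Proposition \ref{uniformlocalsemiconcave}, is absorbed entirely by the hypothesis of Proposition \ref{InfSemiConcaveSurCompact}. The only point worth a one-line check is that the finiteness assumption on $f_K$, which is needed to invoke Proposition \ref{InfSemiConcaveSurCompact}, is supplied verbatim by the hypothesis of the corollary, so nothing further is required.
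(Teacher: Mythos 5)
Your proposal is correct and matches the paper's reasoning exactly: the paper introduces the corollary with the single sentence that it ``is now an obvious consequence of Propositions \ref{uniformlocalsemiconcave} and \ref{InfSemiConcaveSurCompact},'' which is precisely the two-step chain you describe (slice via Proposition \ref{uniformlocalsemiconcave}, then take the infimum over the compact set via Proposition \ref{InfSemiConcaveSurCompact}), with the linear-modulus qualifier carried through both. As a minor aside unrelated to your argument, note that the conclusion in the corollary's statement is presumably a typo for ``locally semi-concave'' (as you correctly state), since $f_K$ is a single function and ``locally uniformly locally semi-concave'' is a property of families.
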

We end this section with another useful theorem. The proof we give is an adaptation of the proof of \cite[Lemma 3.8, page 494]{fathiToulouse}.
\begin{theorem}\label{CriterionForDiff}{\sl Let $\varphi_1,\varphi_2:M\to \R$ be two functions, with $\varphi_1$ locally semi-convex (i.e. $-\varphi_1$ locally semi-concave), and $\varphi_2$ locally semi-concave.
Assume that $\varphi_1\leq\varphi_2$. If we define ${\cal E} =\{x\in M\mid  \varphi_1(x)=\varphi_2(x)\}$,
then both $\varphi_1$ and $\varphi_2$ are differentiable at each $x\in {\cal E}$ with $d_x\varphi_1=d_x\varphi_2$
at such a point. Moreover, the map $x\mapsto d_x\varphi_1=d_x\varphi_2$ is continuous on $\cal E$.

If  $\varphi_1$ is  locally semi-convex and $\varphi_2$ is locally semi-concave,  both with a linear modulus, then, in fact, the map $x\mapsto d_x\varphi_1=d_x\varphi_2$ is locally Lipschitz on $\cal E$.}
\end{theorem}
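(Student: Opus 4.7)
The plan is to work in a chart near a fixed $x_0 \in \mathcal{E}$, so that both functions live on an open set of $\R^n$ and we have a common modulus $\omega$, then exploit the sub/super-gradient descriptions built into the definitions of semi-convex and semi-concave.

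For pointwise differentiability at $x_0$, pick any subgradient $p_-$ of $\varphi_1$ and any supergradient $p_+$ of $\varphi_2$ at $x_0$ (these exist by definition), so that
$$\varphi_1(y) \geq \varphi_1(x_0) + \langle p_-, y - x_0\rangle - \|y-x_0\|\omega(\|y-x_0\|)$$
and the mirror semi-concave inequality holds for $\varphi_2$ with $p_+$. Using $\varphi_1(x_0)=\varphi_2(x_0)$ and $\varphi_1 \leq \varphi_2$ yields
$$\langle p_- - p_+, y - x_0\rangle \leq 2\|y - x_0\|\omega(\|y - x_0\|).$$
Testing $y = x_0 \pm tv$, dividing by $t$, and letting $t\to 0^+$ forces $\langle p_- - p_+, v\rangle = 0$ for every unit $v$, hence $p_- = p_+ =: d_{x_0}$. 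Plugging back, the two inequalities sandwich both $\varphi_1$ and $\varphi_2$ near $x_0$ around $y \mapsto \varphi(x_0) + \langle d_{x_0}, y-x_0\rangle$ with $o(\|y-x_0\|)$ errors, so both are differentiable at $x_0$ with common derivative $d_{x_0}$.

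For continuity on $\mathcal{E}$, let $x_n \to x$ in $\mathcal{E}$. The supergradients of the semi-concave function $\varphi_2$ are locally bounded by Lemma \ref{semiconcaveimpliesLipschitz}(i), so the sequence $(d_{x_n})$ is bounded and converges along a subsequence to some $d_\infty$. Passing to the limit in the semi-concave inequality at $x_n$ with supergradient $d_{x_n}$ (using continuity of $\varphi_2$) shows $d_\infty$ is a supergradient of $\varphi_2$ at $x$; but by the previous step this supergradient is unique, so $d_\infty = d_x$ and the whole sequence converges.

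For the Lipschitz claim under a linear modulus $\omega(r) = kr$, I use the sandwich at two different base points $x_1, x_2 \in \mathcal{E}$. Writing the semi-convex lower bound for $\varphi_1$ at $x_1$ with subgradient $d_1$ and the semi-concave upper bound for $\varphi_2$ at $x_2$ with supergradient $d_2$, chained through $\varphi_1 \leq \varphi_2$ and using $\varphi_1(x_i)=\varphi_2(x_i)=\varphi(x_i)$, gives
$$\langle d_1, y - x_1\rangle - \langle d_2, y - x_2\rangle - k\bigl(\|y-x_1\|^2 + \|y-x_2\|^2\bigr) \leq \varphi(x_2) - \varphi(x_1)$$
for every admissible $y$. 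Maximizing the left-hand side over $y$ is a concave quadratic maximization and produces a term of the form $\|d_1 - d_2\|^2/(8k)$; combined with the Taylor-type bound $|\varphi(x_2) - \varphi(x_1) - \langle d_i, x_2 - x_1\rangle| \leq k\|x_1 - x_2\|^2$ (itself the sandwich applied with $y = x_j$), the right-hand side is dominated by $C\|d_1-d_2\|\|x_1-x_2\| + C'\|x_1-x_2\|^2$. The resulting quadratic inequality in $\|d_1 - d_2\|$ then forces $\|d_1 - d_2\| \leq C'' k\|x_1 - x_2\|$. The main obstacle is precisely this last step: a direct use of the two monotonicity inequalities (the sandwich at $y = x_j$) only controls the scalar $\langle d_1 - d_2, x_1 - x_2\rangle$ by $2k\|x_1 - x_2\|^2$, which is not enough to bound the norm; recovering the full Lipschitz estimate requires the optimal choice of the free test point $y$ in the direction of $d_1 - d_2$ and a careful balancing of the quadratic terms.
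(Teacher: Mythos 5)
Your proof of pointwise differentiability is essentially the same as the paper's: pick a subgradient $p_-$ of $\varphi_1$ and a supergradient $p_+$ of $\varphi_2$ at $x_0\in\mathcal{E}$, sandwich using $\varphi_1\leq\varphi_2$ and equality at $x_0$, conclude $p_-=p_+$, and read off differentiability of both functions. That part is correct.

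Your continuity argument is different from the paper's and is also correct. You prove a ``soft'' statement (local boundedness of supergradients, compactness, stability of the supergradient property under limits, and uniqueness at points of $\mathcal{E}$), whereas the paper extracts an explicit modulus of continuity $t\mapsto\frac{6-2r}{1-r}\omega\bigl(\frac{3-r}{2}t\bigr)$ by applying the two-sided estimate $\lvert\varphi_i(y)-\varphi_i(x)-l_x(y-x)\rvert\leq\lVert y-x\rVert\omega(\lVert y-x\rVert)$ three times with an auxiliary test vector $k$ of size proportional to $\lVert y_2-y_1\rVert$. The paper's argument is constructive and immediately upgrades to the Lipschitz statement under a linear modulus; yours proves continuity but requires a separate argument for Lipschitz.

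For the Lipschitz part there is a genuine gap. Your unconstrained quadratic maximization over $y$ places the optimal test point at $y^*=x_1+(d_1-d_2+2k(x_2-x_1))/(4k)$, whose distance from $x_1$ is of order $\lVert d_1-d_2\rVert/k$ --- not of order $\lVert x_1-x_2\rVert$. The sandwich inequality you are maximizing only holds for $y$ in the (possibly small) region where both the semi-convexity bound for $\varphi_1$ around $x_1$ and the semi-concavity bound for $\varphi_2$ around $x_2$ hold with the linear modulus $kt$. Since $\lVert d_1-d_2\rVert$ is not a priori known to be small, nothing in your argument ensures $y^*$ stays in that region; if the maximizer hits the boundary, the constrained maximum is $\lVert w\rVert R-2kR^2$ rather than $\lVert w\rVert^2/(8k)$, and the resulting inequality only bounds $\lVert d_1-d_2\rVert$ by a fixed constant, not by $Ck\lVert x_1-x_2\rVert$. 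This can be repaired --- e.g.\ invoke the continuity you already proved to shrink the domain until $\lVert d_1-d_2\rVert$ is forced below $kR$, or invoke Lemma \ref{semiconcaveimpliesLipschitz}(i) to bound $\lVert d_1-d_2\rVert\leq 2A$ and then enlarge $k$ to $k'=\max(k,2A/R)$ so that the maximizer lies in the domain --- but as written the argument does not close. The paper's proof sidesteps this entirely: it chooses the test vector with $\lVert k\rVert=(1-r)\lVert y_2-y_1\rVert/2$ up front, so the test point always stays in the chart ball and the estimate it yields is automatically proportional to $\lVert y_2-y_1\rVert$.
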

\begin{proof} Since the statement is local in nature, we will assume that $M= \stackrel{\circ}{\B}$ is the Euclidean unit ball of center $0$ in $\R^n$, and that $-\varphi_1$ and $\varphi_2$ are semi-concave with (common) modulus $\omega$. Suppose now that $x\in{\cal E}$. We can find
two linear maps $l_{1,x},l_{2,x}:\R^n\to\R$ such that
\begin{align*}
\varphi_1(y)&\geq \varphi_1(x)+l_{1,x}(y-x)-\lVert y-x\rVert_{\rm euc}\omega(\lVert y-x\rVert_{\rm euc})\\
\varphi_2(y)&\leq \varphi_2(x)+l_{2,x}(y-x)+\lVert y-x\rVert_{\rm euc}\omega(\lVert y-x\rVert_{\rm euc}).
\end{align*}
Using $\varphi_1\leq \varphi_2$, and $\varphi_1(x)=\varphi_2(x)$, we obtain
\begin{multline*}
l_{1,x}(y-x)-\lVert y-x\rVert_{\rm euc}\omega(\lVert y-x\rVert_{\rm euc})\leq \varphi_1(y)- \varphi_1(x)\leq\\
\leq \varphi_2(y)-\varphi_2(x)\leq  l_{2,x}(y-x)+\lVert y-x\rVert_{\rm euc}\omega(\lVert y-x\rVert_{\rm euc}).\tag{1}
\end{multline*}
In particular, we get
$$ l_{1,x}(y-x)-\lVert y-x\rVert_{\rm euc}\omega(\lVert y-x\rVert_{\rm euc})\leq l_{2,x}(y-x)+\lVert y-x\rVert_{\rm euc}\omega(\lVert y-x\rVert_{\rm euc}),$$
replacing $y$ by $x+v$ with $\lVert v\rVert_{\rm euc}$  small, we conclude
$$l_{1,x}(v)-\lVert v\rVert_{\rm euc}\omega(\lVert v\rVert_{\rm euc})\leq l_{2,x}(v)+\lVert v\rVert_{\rm euc}\omega(\lVert v\rVert_{\rm euc}).$$
Therefore
$$
\lvert[ l_{2,x}-l_{1,x}](v)\rvert\leq 2\lVert v\rVert_{\rm euc}\omega(\lVert v\rVert_{\rm euc}),
$$
for $v$ small enough. Since $l_{2,x}-l_{1,x}$ is linear it must be identically $0$.
We set $l_x=l_{2,x}=l_{1,x}$. For $i=1,2$ and $y\in \stackrel{\circ}{\B}$, we obtain from (1)
\begin{equation*}
\lvert  \varphi_i(y)- \varphi_i(x)-l_x(y-x)\rvert\leq
\lVert y-x\rVert_{\rm euc}\omega(\lVert y-x\rVert_{\rm euc}).\tag{2}
\end{equation*}
This implies that $\varphi_i$ is differentiable at $x\in {\cal E}$, with $d_x\varphi_i=l$.
It remains to show the continuity of the derivative.
Fix $r<1$. We now find a modulus of continuity of the derivative on the ball $r\stackrel{\circ}{\B}$. If $y_1,y_2\in{\cal E}\cap r\stackrel{\circ}{\B}$, and $\lVert k\rVert_{\rm euc}\leq 1-r$, we can apply three times (2) to obtain
\begin{align*}
\varphi_1(y_2)- \varphi_1(y_1)-d_{y_1}\varphi_1(y_2-y_1) & \leq \lVert y_2-y_1\rVert_{\rm euc}\omega(\lVert y_2-y_1\rVert_{\rm euc})\\
\varphi_1(y_2+k)- \varphi_1(y_2)-d_{y_2}\varphi_1(k) & \leq \lVert k\rVert_{\rm euc}\omega(\lVert k\rVert_{\rm euc})\\
-\varphi_1(y_2+k)+ \varphi_1(y_1)+d_{y_1}\varphi_1(y_2+k-y_1) & \leq \lVert y_2+k-y_1\rVert_{\rm euc}\omega(\lVert y_2+k-y_1\rVert_{\rm euc}).
\end{align*}
If we add the first two inequality to the third one, we obtain
\begin{align*}
[d_{y_1}\varphi_1-d_{y_2}\varphi_1](k) &\leq \lVert y_2-y_1\rVert_{\rm euc}\omega(\lVert y_2-y_1\rVert_{\rm euc})
+\lVert k\rVert_{\rm euc}\omega(\lVert k\rVert_{\rm euc})\\
&+[\lVert y_2-y_1\rVert_{\rm euc}+\lVert k\rVert_{\rm euc}]\omega(\lVert y_2-y_1\rVert_{\rm euc}+\lVert k\rVert_{\rm euc}),
\end{align*}
which implies, exchanging $k$ with $-k$, and using that the modulus $\omega$
is non-decreasing
\begin{equation*}
\lvert [d_{y_1}\varphi_1-d_{y_2}\varphi_1](k)\rvert\leq
2[\lVert y_2-y_1\rVert_{\rm euc}+\lVert k\rVert_{\rm euc}]\omega(\lVert y_2-y_1\rVert_{\rm euc}+\lVert k\rVert_{\rm euc}).
\end{equation*}
Since $\lVert y_2-y_1\rVert_{\rm euc}<2$, we can apply the inequality (3) above with any $k$ such that
$\lVert k\rVert_{\rm euc}=(1-r)\lVert y_2-y_1\rVert_{\rm euc}/2$. If we divide the inequality
(3) by $\lVert k\rVert_{\rm euc}$, and take the sup over all $k$ such that
$\lVert k\rVert_{\rm euc}=(1-r)\lVert y_2-y_1\rVert_{\rm euc}/2$, we obtain
$$
\lVert d_{y_1}\varphi_1-d_{y_2}\varphi_1\rVert_{\rm euc}\leq
2\Bigl[\frac{2}{1-r}+1 \Bigr]\omega(\bigl(1+\frac{1-r}2 \bigr)\lVert y_2-y_1\rVert_{\rm euc}).
$$
It follows that a modulus of continuity of $x\mapsto d_x\varphi_1$ on ${\cal E}\cap r\stackrel{\circ}{\B}$ is given by
$$
t\mapsto \frac{6-2r}{1-r}\omega(\frac{3-r}2t).
$$
This implies the continuity of the map $x\mapsto d_x\varphi_1$ on ${\cal E}\cap r\stackrel{\circ}{\B}$. It also shows that it is Lipschitz on ${\cal E}\cap r\stackrel{\circ}{\B}$ when $\omega$ is a linear modulus.
\end{proof}

\section{Tonelli Lagrangians}
\subsection{Definition and background}
We recall some of the basic definition, and some of the results in Calculus of variations (in one variable). There are a lot of references on the subject. In \cite{fathi}, one can find an introduction to the subject that is particularly suited for our purpose. Other references are
\cite{butgiahil} and the first chapters in \cite{Moserbook}. A brief and
particularly nice  description of the main results is contained in \cite{ClarkeSIAM}.
\begin{definition}[Lagrangian]{\rm If $M$ is a manifold, a \textit{Lagrangian} on $M$ is a function
$L:TM\to \R$. In the following we will assume that $L$ is at least bounded below and continuous.}
\end{definition}
\begin{definition}[Action]{\rm If $L$ is a Lagrangian on $M$, for an absolutely continuous curve
$\gamma:[a,b]\to M, a\leq b$, we can define its \textit{action} ${\mathbb A}_L(\gamma)$ by
$$
{\mathbb A}_L(\gamma) =\int_a^bL(\gamma(s),\dot\gamma(s))\, ds.
$$}
\end{definition}
Note that the integral is well defined with values in $\R\cup\{+\infty\}$, because $L$ is bounded below, and $s\to L(\gamma(s),\dot\gamma(s))$ is defined a.e. and measurable.
To make things simpler to write, we set ${\mathbb A}_L(\gamma)=+\infty$ if $\gamma$ is not absolutely continuous.
\begin{definition}[Minimizer]\label{Minimizer}{\rm If $L$ is a Lagrangian on the manifold $M$,
an absolutely continuous curve $\gamma:[a,b]\to M$, with $a\leq b$, is an $L$-\textit{minimizer},
if $\mathbb A_L(\gamma)\leq \mathbb A_L(\delta)$ for every absolutely continuous curve $\delta:[a,b]\to M$
with the same endpoints, i.e. such that $\delta(a)=\gamma(a)$ and $\delta(b)=\gamma(b)$.}
\end{definition}
\begin{definition}[Tonelli Lagrangian]\label{TonelliLagrangian}{\rm We will say that $L:TM \rightarrow \R$ is a \textit{weak Tonelli Lagrangian} on $M$, if it satisfies the following hypotheses:
\begin{enumerate}
\item[(a)] $L$ is C$^1$;
\item[(b)] for each $x \in M$, the map $L(x,\cdot):T_xM \rightarrow \R$ is strictly convex;
\item[(c)] there exist a complete Riemannian metric $g$ on $M$ and a constant $C>-\infty$ such that
$$
\forall (x,v) \in TM, \quad L(x,v) \geq \lVert v \rVert_x + C
$$
where $\lVert \cdot \rVert_x$ is the norm on $T_x M$ obtained from the Riemannian metric $g$;
\item[(d)] for every compact subset $K \subset M$ the restriction of $L$ to $T_KM  = \cup_{x \in K} T_xM$
is superlinear in the fibers of $TM \rightarrow M$:
this means that for every $A\geq 0$, there exists a constant $C(A,K)>-\infty$ such that
$$
\forall (x,v) \in T_KM, \quad L(x,v) \geq A \lVert v \rVert_x + C(A,K).
$$
\end{enumerate}
We will say that $L$ is a \textit{Tonelli Lagrangian}, if it is a weak Tonelli Lagrangian, and satisfies the following two strengthening of conditions (a) and (b) above:
\begin{enumerate}
\item[(a')] $L$  is {\rm C}$^2$;
\item[(b')] for every
$(x,v) \in TM$, the second partial derivative $\displaystyle\frac{\partial^2 L}{\partial v^2}(x,v)$ is positive definite on $T_xM$.
\end{enumerate}}
\end{definition}
Since above a compact subset of a manifold all Riemannian metrics are equivalent, if condition (d) in the definition is satisfied for one particular Riemannian metric, then it is satisfied for any other Riemannian metric.

Note that when $L$ is a weak Tonelli Lagrangian on $M$, and $U:M\to \R$ is a C$^1$ function {\em which is bounded below}, then $L+U$, defined by $(L+U)(x,v)=L(x,v)+U(x)$ is a weak Tonelli Lagrangian. If moreover $L$ is a Tonelli Lagrangian, and $U$ is C$^2$ and bounded below, then $L+U$ is a Tonelli Lagrangian. Therefore one can generate a lot of (weak) Tonelli Lagrangians from the following simple example.
\begin{example}\label{ExampleRiemannian}{\rm Suppose that $g$ is a complete smooth Riemannian metric on $M$, and $r>1$. We define the Lagrangian $L_{r,g}$ on $M$ by
$$
L_{r,g}(x,v)=\lVert v\rVert_x^r=g_x(v,v)^{r/2}.
$$
\begin{enumerate}
\item[1)] $L_{2,g}$ is a Tonelli Lagrangian.
\item[2)] For any $r>1$, the Lagrangian is C$^1$ and is a weak Tonelli Lagrangian.
\end{enumerate}
In both cases, the Riemannian metric mentioned in condition (c) of Definition
\ref{TonelliLagrangian} is the same metric $g$.

Moreover, the vertical derivative of the Lagrangian $L_{r,g}$ is given by
$$
\frac{\partial L_{r,g}}{\partial v}(x,v)=r\lVert v\rVert_x^{r-2}g_x( v,\cdot),
$$
}
\end{example}
\begin{proof} Since $r>1$ it is not difficult
to check that $L$ has (in coordinates) partial derivatives everywhere with
$$
\frac{\partial L_{r,g}}{\partial x}(x,0)=0\quad
\text{and} \quad \frac{\partial L_{r,g}}{\partial v}(x,0)=0,
$$
and that these partial derivatives are continuous. Therefore $L$ is C$^1$. A simple computation gives
$$
\frac{\partial L_{r,g}}{\partial v}(x,v)=r\lVert v\rVert_x^{r-2}g_x( v,\cdot).
$$
We now prove condition (c) and (d) of Definition \ref{TonelliLagrangian} at once. In fact, if $A$ is given, we have
$$
L_{r,g}(x,v)=\lVert v\rVert_x^r\geq A\lVert v\rVert_x-A^{r/r-1},
$$
as on can see by considering separately the two cases $\lVert v\rVert_x^{r-1}\geq A$ and
$\lVert v\rVert_x^{r-1}\leq A$. The rest of the proof is easy.
\end{proof}
The completeness of the Riemannian metric in condition (c) of Definition
\ref{TonelliLagrangian} above is crucial to guarantee that a set of the form
$$
{\cal F} =\{\gamma\in C^0([a,b],M) \mid \gamma(a)\in K,\ {\mathbb A}_L(\gamma)\leq \kappa\},
$$
where $K$ is a compact subset in $M$, $\kappa$ is a finite constant, and  $a\leq b$, is compact in the C$^0$ topology. In fact, condition (c) implies that the curves in such a set
$\cal F$ have a $g$-length which is bounded independently of $\gamma$. Since $K$ is compact (assuming $M$ connected to simplify things) this implies that there exist $x_0\in M$ and $R<+\infty$ such that all the curves in $\cal F$ are contained in the closed ball $\bar B(x_0,R)=\{y\in M\mid d(x,y)\leq R\}$, where $d$ is the distance associated to the Riemannian metric $g$. But such a ball $\bar B(x_0,R)$ is compact since $g$ is complete (Hopf-Rinow Theorem). From there, one obtains that the set ${\cal F}$ is compact in the C$^0$ topology, see  \cite[Chapters 2 and 3]{butgiahil}.

The direct method in the Calculus of Variations, see \cite[Theorem 3.7, page 114]{butgiahil} or \cite{fathi} for Tonelli Lagrangians, implies:
\begin{theorem}\label{existenceminimizer}{\sl Suppose $L$ is a weak Tonelli Lagrangian on the connected manifold $M$. Then for every
$\ a,b \in \R$, $a<b$, and every $x,y \in M$, there exists an absolutely continuous curve $\gamma:[a,b] \rightarrow M$ which is an  $L$-minimizer with $\gamma(a)=x$ and $\gamma(b)=y$.}
\end{theorem}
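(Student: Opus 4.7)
The plan is to apply the direct method of the calculus of variations. First I would verify that the infimum of the action over absolutely continuous curves from $x$ to $y$ defined on $[a,b]$ is finite. Since $M$ is connected and $L$ is continuous, there exists a piecewise $C^1$ curve joining $x$ to $y$; such a curve is absolutely continuous and its action is finite because $L$ restricted to a compact set in $TM$ is bounded. Combined with the lower bound $L \geq C$ from (c), this gives $-\infty < \inf \mathbb{A}_L < +\infty$.

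Next I would pick a minimizing sequence $\gamma_n:[a,b]\to M$ with $\gamma_n(a)=x,\ \gamma_n(b)=y$ and $\mathbb{A}_L(\gamma_n) \leq \kappa$ for some finite $\kappa$. By the discussion immediately following Definition \ref{TonelliLagrangian}, hypothesis (c) together with the completeness of $g$ forces all $\gamma_n$ to take values in a fixed compact ball $K = \bar B(x_0,R) \subset M$. On such a compact set, the superlinearity hypothesis (d) applies: for every $A \geq 0$ there is a constant $C(A,K)$ with $L(z,w) \geq A\|w\|_z + C(A,K)$ for all $(z,w)\in T_K M$. Integrating over $[a,b]$ yields
$$
\int_a^b \|\dot\gamma_n(s)\|_{\gamma_n(s)}\, ds \leq \frac{\kappa - (b-a)\,C(A,K)}{A}
$$
for each $A>0$, which is a De La Vallée Poussin uniform integrability estimate on the speeds $\|\dot\gamma_n\|$. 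This implies the family $\{\gamma_n\}$ is equi-absolutely-continuous with respect to the Riemannian distance $d$ associated to $g$: for every $\varepsilon>0$ there is $\delta>0$ such that $\sum d(\gamma_n(s_i),\gamma_n(t_i)) < \varepsilon$ whenever $\sum (t_i-s_i) < \delta$, uniformly in $n$. In particular $\{\gamma_n\}$ is equicontinuous, and since it takes values in the compact set $K$, by Ascoli--Arzelà we may extract a subsequence (not relabeled) converging uniformly to a continuous $\gamma:[a,b]\to M$ with $\gamma(a)=x$ and $\gamma(b)=y$. The equi-absolute-continuity passes to the limit, so $\gamma$ is itself absolutely continuous.

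It remains to show $\mathbb{A}_L(\gamma) \leq \liminf_n \mathbb{A}_L(\gamma_n)$; combined with $\mathbb{A}_L(\gamma_n) \to \inf \mathbb{A}_L$, this gives that $\gamma$ is a minimizer. This is the classical Tonelli lower semicontinuity theorem for integral functionals, and it is where hypotheses (a) and (b) get used: continuity of $L$ together with strict convexity (in fact, just convexity suffices here) of $v \mapsto L(z,v)$ on each fiber $T_z M$ make $\mathbb{A}_L$ sequentially lower semicontinuous under uniform convergence of absolutely continuous curves whose derivatives are uniformly integrable. In local charts one reduces to the Euclidean statement, which is proved by writing $L$ as the supremum of affine functions of $v$ (via convexity) and applying Fatou plus weak $L^1$-compactness of $\{\dot\gamma_n\}$ granted by the De La Vallée Poussin bound above. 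Patching charts using a finite cover of $K$ gives the global statement.

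The main obstacle is the lower semicontinuity step: one must be careful that weak convergence of the derivatives (which is what the uniform integrability of $\|\dot\gamma_n\|$ gives us via Dunford--Pettis) is compatible with the nonlinearity of $L$, and this is precisely the point where fiberwise convexity is indispensable. Once this is in hand, the compactness furnished by hypothesis (c) and the equi-integrability furnished by hypothesis (d) combine with it to close the argument, exactly as in \cite[Theorem 3.7]{butgiahil}.
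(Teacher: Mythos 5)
Your proposal is correct and follows exactly the route the paper gestures at: the paper does not reproduce a proof of Theorem~\ref{existenceminimizer} but instead invokes the direct method of the calculus of variations, citing \cite[Theorem 3.7, page 114]{butgiahil} and \cite{fathi}, after explaining (in the paragraph preceding the theorem) how hypothesis (c) plus completeness of $g$ force curves of bounded action starting in a compact set to stay in a compact ball --- which is precisely your steps for confinement and Ascoli--Arzel\`a. Your fleshing out of the remaining ingredients (equi-integrability of the speeds from the fiberwise superlinearity (d), weak $L^1$ compactness via Dunford--Pettis, and Tonelli lower semicontinuity from fiberwise convexity) is the standard completion of the argument and matches the cited sources. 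One small wording quibble: the displayed bound $\int_a^b \|\dot\gamma_n\|\,ds \leq (\kappa - (b-a)C(A,K))/A$ by itself is only a uniform $L^1$ bound; the De La Vall\'ee Poussin / equi-integrability conclusion comes from running the same estimate over arbitrary sub-intervals $E \subset [a,b]$, combined with the global lower bound $L \geq C$ to control $\int_E L(\gamma_n,\dot\gamma_n)$ --- but this is exactly what you clearly intend, and the argument closes.
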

In fact in \cite[Theorem 3.7, page 114]{butgiahil}, the existence of absolutely continuous minimizers is valid under very general hypotheses on the Lagrangian $L$ (the C$^1$ hypothesis on $L$ is much stronger than necessary).
We now come to the problem of regularity of minimizers which uses the {\rm C}$^1$ hypothesis on $L$:
\begin{theorem}\label{regularityminimizer}{\sl If $L$ is a weak Tonelli Lagrangian, then every minimizer $\gamma:[a,b]\to M$ is {\rm C}$^1$. Moreover, on every interval $[t_0,t_1]$ contained in a domain of a chart, it satisfies the following equality written in the coordinate system
\begin{equation*}
\frac{\partial L}{\partial v}(\gamma(t_1),\dot\gamma(t_1))
- \frac{\partial L}{\partial v}(\gamma(t_0),\dot\gamma(t_0))
= \int_{t_0}^{t_1} \frac{\partial L}{\partial x}(\gamma(s),\dot\gamma(s))\,ds,\tag{IEL}
\end{equation*}
which is an integrated from of the Euler-lagrange equation. This implies that ${\partial L}/{\partial v}(\gamma(t),\dot\gamma(t))$ is a {\rm C}$^1$ function of $t$ with
$$\frac{d}{dt}\left[\frac{\partial L}{\partial v}(\gamma(t),\dot\gamma(t))\right]=\frac{\partial L}{\partial x}(\gamma(t),\dot\gamma(t)).$$

Moreover, if $L$ is a {\rm C}$^r$ Tonelli Lagrangian, with $ r\geq 2$, then any minimizer is of class C$^r$.}
\end{theorem}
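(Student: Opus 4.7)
The plan is to reduce to a chart on a subinterval $[t_0,t_1]\subset[a,b]$, establish the integrated Euler--Lagrange equation (IEL) by first variations in position, and then use the global Legendre transform to upgrade $\dot\gamma$ from a merely $L^1$ function to a continuous one. Working in local coordinates, the continuity of $\gamma$ and the completeness of the metric $g$ place the image of $\gamma$ in a compact set $K$; condition (c) of Definition~\ref{TonelliLagrangian} together with the finiteness of $\mathbb A_L(\gamma)$ gives $\dot\gamma\in L^1$, and condition (d) yields $L(\gamma,\dot\gamma)\in L^1$.

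The central step is the first variation: for every $\eta\in C_c^\infty((t_0,t_1),\R^n)$ the perturbation $\gamma+\varepsilon\eta$ has the same endpoints as $\gamma$, so by minimality $\varepsilon\mapsto \mathbb A_L(\gamma+\varepsilon\eta)$ attains its minimum at $\varepsilon=0$. If this map is differentiable there, its derivative must vanish, giving
\[
\int_{t_0}^{t_1}\Bigl[\tfrac{\partial L}{\partial x}(\gamma,\dot\gamma)\cdot\eta+\tfrac{\partial L}{\partial v}(\gamma,\dot\gamma)\cdot\dot\eta\Bigr]\,ds=0.
\]
To justify passing $d/d\varepsilon$ under the integral I would use the convex-gradient bound $|\tfrac{\partial L}{\partial v}(x,v)\cdot w|\leq \max\{L(x,v+w),L(x,v-w)\}-L(x,v)$, valid for any $C^1$ function convex in $v$, combined with the a priori finiteness of $\mathbb A_L(\gamma\pm\eta)$ supplied by the classical Tonelli a priori Lipschitz regularity of minimizers in the weak-Tonelli setting; this yields an $L^1$ dominating function for the difference quotient. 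Once the identity is secured, the Du~Bois--Reymond lemma produces a constant $c$ such that
\[
\tfrac{\partial L}{\partial v}(\gamma(t),\dot\gamma(t))=c+\int_{t_0}^{t}\tfrac{\partial L}{\partial x}(\gamma(s),\dot\gamma(s))\,ds=:P(t)\qquad\text{for a.e.\ }t,
\]
which is exactly (IEL) once one identifies $c=\tfrac{\partial L}{\partial v}(\gamma(t_0),\dot\gamma(t_0))$.

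Now I invoke the global Legendre transform $\Leg(x,v)=(x,\tfrac{\partial L}{\partial v}(x,v))$ of Definition~\ref{defGlobLegTransf}: strict convexity (b) and superlinearity (d) make $\Leg$ a homeomorphism of $TM$ onto its image in $T^*M$. Since $P$ is continuous in $t$ and $(\gamma(t),\dot\gamma(t))=\Leg^{-1}(\gamma(t),P(t))$ holds a.e., the composition with the continuous inverse provides a continuous representative of $\dot\gamma$, so $\gamma\in C^1$ and (IEL) then holds pointwise. For the higher-regularity assertion, if $L$ is $C^r$ Tonelli with $r\geq 2$, condition (b') makes $\partial^2L/\partial v^2$ invertible and the inverse function theorem upgrades $\Leg$ to a $C^{r-1}$ diffeomorphism onto its image. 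A routine bootstrap then finishes: $\gamma\in C^1\Rightarrow P\in C^1$ via the integral formula $\Rightarrow (\gamma,\dot\gamma)=\Leg^{-1}(\gamma,P)\in C^1\Rightarrow \gamma\in C^2$, and inductively $\gamma\in C^{k+1}$ whenever $\gamma\in C^k$ for $k\leq r-1$, terminating at $\gamma\in C^r$. The principal obstacle is the differentiability of $\varepsilon\mapsto \mathbb A_L(\gamma+\varepsilon\eta)$ at $\varepsilon=0$; once that first-variation identity is in hand, everything downstream is essentially formal given the (diffeo)morphism properties of $\Leg$.
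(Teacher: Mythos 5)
Your proposal is correct and follows essentially the same route as the paper: both defer to the literature (Clarke--Vinter, Clarke) for the nontrivial fact that absolutely continuous minimizers of a weak Tonelli Lagrangian are Lipschitz, then differentiate the action and apply the Du~Bois--Reymond argument (the paper says ``integration by parts'') to get (IEL') a.e., use that the global Legendre transform is a homeomorphism (Proposition~\ref{LegendreHomeo}) to produce a continuous representative of $\dot\gamma$ and hence $\gamma\in C^1$ with (IEL) holding pointwise, and bootstrap through $\Leg^{-1}$ when $L$ is $C^r$ Tonelli. The only cosmetic difference is your explicit convex-gradient domination for the difference quotient, which the paper leaves implicit in its citations.
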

\begin{proof} We will only sketch the proof. If $L$ is a Tonelli Lagrangian, this theorem would be a formulation of what is nowadays called Tonelli's existence and regularity theory. In that case its proof can be found in many places, for example \cite{butgiahil}, \cite{ClarkeSIAM}, or \cite{fathi}.
The fact that the regularity of minimizers holds for C$^1$ (or
even less smooth) Lagrangians is more recent. The fact that a
minimizer is Lipschitz has been established by Clarke and Vinter,
see \cite[Corollary 1, page 77, and Corollary 3.1, page
90]{ClarkeVinter} (again the hypothesis $L$ is C$^1$ is stronger
than the one required in this last work). The same fact under
weaker regularity assumptions on $L$ has been proved in
\cite{amasbut}. A short and elegant proof of the fact that a
minimizer for the class of absolutely continuous curves is
necessarily Lipschitz has been given by Clarke, see
\cite{ClarkeETDS}.
The key idea of why
$\dot\gamma$ should be bounded is that the Energy is conserved
along absolutely continuous minimizers (this is easy to prove for C$^1$ minimizers, see Corollary
\ref{corConsEnergy}) and the sublevels of the Energy are compact
(see Proposition \ref{propEnergyCpt}).

Once one knows that $\gamma$ is Lipschitz, when
$L$ is C$^1$ it is possible to differentiate the action, see
\cite{butgiahil}, \cite{ClarkeSIAM}, or \cite{fathi}, and, using
an integration by parts, one can show that $\gamma$ satisfies the
following integrated form (IEL') of the Euler-Lagrange equation
for  almost every $t\in[t_0,t_1]$, for some fixed linear form $c$:
\begin{equation*}
\frac{\partial L}{\partial v}(\gamma(t),\dot\gamma(t))
=c+ \int_{t_0}^{t} \frac{\partial L}{\partial x}(\gamma(s),\dot\gamma(s))\,ds.\tag{IEL'}
\end{equation*}
But the continuity of the right hand side in (IEL') implies that ${\partial L}/{\partial v}(\gamma(t),\dot\gamma(t))$ extends continuously everywhere on $[t_0,t_1]$. Conditions (a) and (b) on $L$ imply that the global Legendre transform
$$
\Leg: TM \rightarrow T^*M,
$$
$$
(x,v) \mapsto (x,\frac{\partial L}{\partial v}(x,v)),
$$
is continuous and injective, therefore a homeomorphism on its image by, for example,  Brouwer's Theorem on the Invariance of Domain (see also Proposition
\ref{LegendreHomeo} below). We therefore conclude that $\dot\gamma(t)$ has a continuous extension to $[t_0,t_1]$. Since $\gamma$ is Lipschitz this implies that $\gamma$ is C$^1$. Equation (IEL) follows from (IEL'), which now holds everywhere by continuity.
\end{proof}
In fact, in this paper, we will only use the cases when $L$ is C$^2$, in which case this regularity of minimizers will follow from the ``usual'' Tonelli regularity theory, or when $L$ is of the form $L(x,v)=\lVert v\rVert^p_x$, $p>1$, where the norm is obtained from a
C$^2$ Riemannian metric, in which case the minimizers are necessarily geodesics which are of course as smooth as the Riemannian metric, see Proposition
\ref{RiemannianUFC'} below.

To obtain further properties it is necessary to introduce the global Legendre transform.
\begin{definition}[Global Legendre Transform]
\label{defGlobLegTransf}
{\rm If $L$ is a C$^1$ Lagrangian on the manifold $L$, its \textit{global Legendre transform}
$\Leg: TM \rightarrow T^*M$, where $T^*M$ is the cotangent bundle of $M$,
is defined by
$$
\Leg (x,v)  =(x,\frac{\partial L}{\partial v}(x,v)).
$$}
\end{definition}
\begin{proposition}\label{LegendreHomeo}{\sl If $L$ is a weak Tonelli Lagrangian on the manifold $M$, then its global Legendre transform
$\Leg: TM \rightarrow T^*M$  is a homeomorphism from $TM$ onto $T^*M$.

Moreover, if $L$ is a {\rm C}$^r$ Tonelli Lagrangian with $r\geq 2$, then $\Leg$ is {\rm C}$^{r-1}$.}
\end{proposition}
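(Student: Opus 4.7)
The plan is to verify bijectivity of $\Leg$ on each fiber separately, then promote fiberwise inverses to a global topological inverse using the superlinearity hypothesis, and finally upgrade continuity to $C^{r-1}$ regularity via the inverse function theorem under the stronger assumptions.

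First, I would fix $x \in M$ and study the fiber map $\Leg_x : T_xM \to T_x^*M$, $v \mapsto \partial L/\partial v(x,v)$. Injectivity is immediate from strict convexity of $L(x,\cdot)$: if $\Leg_x(v_1) = \Leg_x(v_2)$ with $v_1 \neq v_2$, then $L(x,\cdot)$ would be affine along the segment $[v_1,v_2]$, contradicting strict convexity (condition (b)). For surjectivity, given $p \in T_x^*M$, consider $v \mapsto L(x,v) - p(v)$; by the fiberwise superlinearity (condition (d) applied to $K = \{x\}$), this function tends to $+\infty$ as $\|v\|_x \to \infty$, hence attains its minimum on the finite-dimensional space $T_xM$, and at the minimum $\Leg_x(v) = p$. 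So $\Leg$ is a continuous bijection.

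The main obstacle is showing that $\Leg^{-1}$ is continuous; this is where the \emph{local uniform} superlinearity must be exploited. I would proceed as follows: assume $(x_n,p_n) \to (x,p)$ in $T^*M$, and let $v_n \in T_{x_n}M$ be the unique preimage. Pick a compact neighborhood $K$ of $x$ containing $\{x_n\}$, and set $A := 1 + \sup_n \|p_n\|_{x_n}$. Since $L(x_n,\cdot)$ is convex with differential $p_n$ at $v_n$, the subgradient inequality at $0$ gives $L(x_n,v_n) \leq p_n(v_n) + L(x_n,0) \leq \|p_n\|_{x_n}\|v_n\|_{x_n} + L(x_n,0)$. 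Combined with $L(x_n,v_n) \geq A \|v_n\|_{x_n} + C(A,K)$, this forces $\|v_n\|_{x_n} \leq L(x_n,0) - C(A,K)$, which is bounded because $x_n$ lies in the compact set $K$. So $(v_n)$ is bounded in $TM$; any limit point $v^*$ satisfies $\Leg(x,v^*) = (x,p)$ by continuity of $\Leg$, hence $v^* = \Leg_x^{-1}(p)$ by injectivity, so the whole sequence converges and $\Leg^{-1}$ is continuous.

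For the final regularity claim, assume $L$ is a $C^r$ Tonelli Lagrangian, $r \geq 2$. Then $\Leg$ is $C^{r-1}$ and its derivative at $(x,v)$, read in any chart, has a block-triangular form with the identity on the base and $\partial^2 L/\partial v^2(x,v)$ on the fiber; condition (b') makes this block positive definite, hence invertible. The inverse function theorem then shows $\Leg$ is a local $C^{r-1}$ diffeomorphism at every point, and combined with the fact that $\Leg$ is already a global homeomorphism (proved above), this gives that $\Leg^{-1}$ is globally $C^{r-1}$. The fiberwise superlinearity estimate above is the step I expect to require the most care, since one must avoid implicitly assuming $\|p_n\|_{x_n}$ is bounded away from zero and must handle the fact that the norms $\|\cdot\|_{x_n}$ vary with $n$, which is why restricting to a compact neighborhood of $x$ (where the Riemannian norms are equivalent) is essential.
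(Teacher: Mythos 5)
Your proof is correct and follows essentially the same strategy as the paper's: fiberwise bijectivity from strict convexity and fiberwise superlinearity, then a properness-type estimate combining the convexity (subgradient) inequality $L(x,v)-L(x,0)\leq \frac{\partial L}{\partial v}(x,v)(v)$ with the locally uniform superlinearity (d) to control $\|v\|_x$ in terms of $\|p\|_x$, and finally the inverse function theorem with the block-triangular Jacobian for the $C^{r-1}$ statement. The only cosmetic difference is that you phrase the continuity of $\Leg^{-1}$ as a sequential-compactness argument while the paper proves properness of $\Leg$ by a direct set-containment; these are interchangeable once $\Leg$ is known to be a continuous bijection between manifolds.
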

\begin{proof} We first prove the surjectivity of $\Leg$. Suppose $p\in T^*_xM$. By condition (d) in Definition \ref{TonelliLagrangian}, we have
\begin{align*}
p(v)-L(x,v)&\leq p(v)-(\lVert p\rVert_x+1)\rVert v\lVert_x-C(\lVert p\rVert_x+1,\{x\})\\
&\leq -\rVert v\lVert_x-C(\lVert p\rVert_x+1,\{x\}).
\end{align*}
But this last quantity tends to $-\infty$, as ${\rVert v\lVert_x\to+\infty}$. Therefore the continuous function $v\mapsto p(v)-L(x,v)$ achieves a maximum at some
point $v_p\in T_xM$. Since this function is C$^1$, its derivative at $v_p$ must be $0$. This yields $p-\partial L/\partial v(x,v_p)=0$. Hence $ (x,p)=\Leg(x,v_p)$.

To prove injectivity of $\Leg$, it suffices to show that for $v,v'\in T_xM$, with $v\neq v'$, we have $\partial L/\partial v(x,v)\neq \partial L/\partial v(x,v')$. Consider the function
$\varphi:[0,1]\to \R,t\mapsto L(x,tv+(1-t)v')$, which by condition (b) of Definition \ref{TonelliLagrangian} is strictly convex. Since it is C$^1$, we must have $\varphi'(0)\neq \varphi'(1)$. In fact, if that was not the case, then the non-decreasing function $\varphi'$ would be constant on $[0,1]$, and $\varphi$ would be affine on $[0,1]$. This contradicts strict convexity. By a simple computation, we therefore get
$$
\frac{\partial L}{\partial v}(x,v')(v-v')=\varphi'(0)\neq\varphi'(1)=\frac{\partial L}{\partial v}(x,v)(v-v').
$$
This implies $\partial L/\partial v(x,v')\neq \partial L/\partial v(x,v)$.
We now show that $\Leg$ is a homeomorphism. Since this map is continuous, and bijective, we have to check that it is proper, i.e.\ inverse images under $\Leg$ of compact subsets of $T^*M$ are (relatively) compact. For this it suffices to show that for every compact subset $K \subset M$, and every $C <+\infty$, the set
$$
\{(x,v) \in TM \mid x \in K,\  \lVert \frac{\partial L}{\partial v}(x,v)\rVert_x\leq C\}
$$
is compact. By convexity of $v\mapsto L(x,v)$, we obtain
$$
\frac{\partial L}{\partial v}(x,v)(v)\geq L(x,v)-L(x,0).
$$
But $\lVert{\partial L}/{\partial v}(x,v)\rVert_x \geq {\partial L}/{\partial v}(x,v)(v/\lVert v\rVert_x)$, therefore by condition (d) of Definition \ref{TonelliLagrangian}, we conclude that
$$
\forall A\geq 0,\forall (x,v)\in T_KM, \quad
\lVert\frac{\partial L}{\partial v}(x,v)\rVert_x \geq A-[C(K,A)/\rVert v\lVert_x].
$$
Taking $A=C+1$,
we get the inclusion
$$
\{(x,v) \in TM \mid x \in K, \ \lVert \frac{\partial L}{\partial v}(x,v)\rVert_x\leq C\}
\subset \{(x,v) \in TM \mid x \in K, \ \lVert v \rVert_x\leq C(K,C+1)\},
$$
and the compactness of the first set follows.

Suppose now that $L$ is a {\rm C}$^r$ Tonelli Lagrangian with $r\geq 2$. Obviously $\Leg$ is {\rm C}$^{r-1}$. By the inverse function theorem, to show that it is a {\rm C}$^{r-1}$ diffeomorphism, it suffices to show that the derivative is invertible at each point of $TM$. But a simple computation in coordinates show that the derivative of $\Leg$ at $(x,v)$ is given in matrix form by
$$
\begin{pmatrix}
     {\rm Id} & 0\\
     \displaystyle\frac{ \partial ^2 L}{\partial x\partial v}(x,v) &
     \displaystyle\frac{ \partial ^2 L}{\partial v^2}(x,v) \\
   \end{pmatrix}
$$
This is clearly invertible by (b') of Definition \ref{TonelliLagrangian}.
\end{proof}
\begin{definition}{\rm If $L$ is a Lagrangian on $M$, we define its \textit{Hamiltonian} $H:T^*M\to\R\cup\{+\infty\}$ by
$$
H(x,p) =\sup_{v\in T_xM} p(v)-L(x,v).
$$}
\end{definition}
\begin{proposition}\label{propertiesH}{\sl Let $L$ be  a weak Tonelli Lagrangian on the manifold $M$. Its Hamiltonian $H$ is everywhere finite valued and satisfies the following properties:
\begin{enumerate}
\item[{\rm (a$^*$)}] $H$ is C$^1$, and in coordinates
\begin{equation*}
\left\{
\begin{array}{l}
\dfrac{\partial H}{\partial p}(\Leg (x,v))=v\\
\dfrac{\partial H}{\partial x}(\Leg (x,v))=-\dfrac{\partial L}{\partial x}(x,v).
\end{array}
\right.
\end{equation*}
\item[{\rm (b$^*$)}] for each $x \in M$, the map $H(x,\cdot):T^*_xM \rightarrow \R$ is strictly convex;
\item[{\rm (d$^*$)}] for every compact subset $K \subset M$ the restriction of $H$ to $T_K^*M  = \cup_{x \in K} T_x^*M$
is superlinear in the fibers of $T^*M \rightarrow M$:
this means that for every $A\geq 0$, there exists a finite constant $C^*(A,K)$ such that
$$
\forall (x,p) \in T_K^*M, \quad H(x,p) \geq A \lVert p \rVert_x + C^*(A,K).
$$
In particular, the function $H$ is a proper map, i.e.\ inverse images under H of compact subsets of $\R$ are compact.
\end{enumerate}
If $L$ is a {\rm C}$^r$ Tonelli Lagrangian with $r\geq 2$, then
\begin{enumerate}
\item[{\rm (a'$^*$)}] $H$  is {\rm C}$^r$;
\item[{\rm (b'$^*$)}] for every
$(x,v) \in M$, the second partial derivative $\displaystyle\frac{\partial^2 H}{\partial p^2}(x,p)$ is positive definite on $T_x^*M$.
\end{enumerate}}
\end{proposition}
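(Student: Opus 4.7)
The plan is to first establish that $H$ is everywhere finite-valued and that the sup in its definition is attained at the unique point $v(x,p) = \pi\circ\Leg^{-1}(x,p)$; then to deduce (b$^*$) and (d$^*$) from convex duality and superlinearity; then to prove (a$^*$) by splitting into differentiability in $p$ and in $x$; and finally to obtain (a'$^*$) and (b'$^*$) by pulling back the smoothness of $\Leg^{-1}$ in the $C^r$ Tonelli case.

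Finiteness and the formula $H(x,p) = p(v(x,p)) - L(x,v(x,p))$ follow directly from superlinearity (condition (d) of Definition \ref{TonelliLagrangian}): applying it with $A = \lVert p\rVert_x + 1$ shows $v\mapsto p(v) - L(x,v)$ is coercive, hence attains a maximum, at which $p = \partial L/\partial v(x,v)$ by the first-order condition. Proposition \ref{LegendreHomeo} identifies this critical point with $\pi\circ\Leg^{-1}(x,p)$, and strict convexity of $L(x,\cdot)$ ensures uniqueness. Property (b$^*$) is then the classical fact that the Fenchel conjugate of a strictly convex, superlinear, $C^1$ function is strictly convex. For (d$^*$), given $(x,p)\in T^*_KM$ and $A\geq 0$, pick $v_0\in T_xM$ with $\lVert v_0\rVert_x = 1$ and $p(v_0) = \lVert p\rVert_x$ (Riesz representation via the Riemannian metric); then $H(x,p)\geq p(Av_0) - L(x,Av_0) = A\lVert p\rVert_x - L(x,Av_0)$, and $L$ is bounded above on the compact set $\{(y,v)\in T_KM : \lVert v\rVert_y = A\}$ by some $M(A,K)$, yielding $C^*(A,K) = -M(A,K)$. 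The properness claim then follows because for any compact $K\subset M$ the sublevel set $\{H\leq R\}\cap T^*_KM$ is contained in $\{\lVert p\rVert_x\leq R - C^*(1,K)\}$, which is compact.

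The heart of the argument, and the main obstacle, is (a$^*$): for a weak Tonelli $L$ the map $\Leg$ is only a homeomorphism, so one cannot directly pull differentiability back through $\Leg^{-1}$. The $p$-differentiability of $H(x,\cdot)$ with $\partial H/\partial p(x,p) = v(x,p)$ is a standard consequence of convex duality: the Fenchel conjugate of a strictly convex, superlinear, $C^1$ function is $C^1$ with gradient equal to the unique maximizer. For the $x$-differentiability at $(x_0,p_0)$, set $v_0 = v(x_0,p_0)$ and work in a chart. The lower bound $H(x,p_0)\geq p_0(v_0) - L(x,v_0)$ together with the $C^1$ Taylor expansion of $L$ in its first argument gives
\begin{equation*}
H(x,p_0) - H(x_0,p_0)\geq -\frac{\partial L}{\partial x}(x_0,v_0)(x - x_0) + o(\lvert x - x_0\rvert).
\end{equation*}
For the matching upper bound, let $v(x) = v(x,p_0)$, which is continuous by continuity of $\Leg^{-1}$, and decompose
\begin{equation*}
H(x,p_0) - H(x_0,p_0) = \bigl[p_0(v(x)) - L(x_0,v(x)) - p_0(v_0) + L(x_0,v_0)\bigr] - \bigl[L(x,v(x)) - L(x_0,v(x))\bigr].
\end{equation*}
The first bracket is $\leq 0$ because $v_0$ maximizes the strictly concave map $v\mapsto p_0(v) - L(x_0,v)$, while the second bracket equals $\frac{\partial L}{\partial x}(x_0,v_0)(x - x_0) + o(\lvert x - x_0\rvert)$ since $v(x)\to v_0$ and $\partial L/\partial x$ is continuous. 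Hence $\partial H/\partial x(x_0,p_0) = -\partial L/\partial x(x_0,v_0)$, and both partials are continuous in $(x,p)$, so $H\in C^1$.

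When $L$ is a $C^r$ Tonelli Lagrangian with $r\geq 2$, Proposition \ref{LegendreHomeo} upgrades $\Leg$ to a $C^{r-1}$ diffeomorphism, so $v(x,p) = \pi\circ\Leg^{-1}(x,p)$ is $C^{r-1}$; both partials of $H$ computed above are then $C^{r-1}$, giving $H\in C^r$ and establishing (a'$^*$). For (b'$^*$), differentiating $\Leg(x,v(x,p)) = (x,p)$ with respect to $p$ in a chart yields $\frac{\partial^2 L}{\partial v^2}(x,v)\cdot\frac{\partial v}{\partial p}(x,p) = \operatorname{Id}$, so $\frac{\partial^2 H}{\partial p^2}(x,p) = \frac{\partial v}{\partial p}(x,p) = \bigl[\frac{\partial^2 L}{\partial v^2}(x,v)\bigr]^{-1}$, which is positive definite by (b') of Definition \ref{TonelliLagrangian}.
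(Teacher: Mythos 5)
Your proof is correct, and it arrives at the same statements, but the route to the central item (a$^*$) is genuinely different from the paper's. The paper first observes, via coercivity, that for $\lVert p\rVert$ bounded the supremum defining $H$ is achieved inside a fixed compact ball of velocities; it then invokes a clean envelope-type lemma (stated and left unproved in the paper): if $f:N\times K\to\R$ has a continuous partial $x$-derivative on $N\times K$ with $K$ compact and a unique maximizer $k_x$ for each $x$, then $F(x)=\sup_{k}f(x,k)$ is $C^1$ with $D_xF=\partial f/\partial x(x,k_x)$. Applying this with $N$ the $(x,p)$-variables and $K$ the compact $v$-ball yields both partials of $H$ in one stroke. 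You instead split the work: you get $\partial H/\partial p = v(x,p)$ by quoting the classical convex-duality fact that the conjugate of a strictly convex superlinear function is differentiable with gradient equal to the unique maximizer, and you get $\partial H/\partial x = -\partial L/\partial x(x,v(x,p))$ by a hands-on two-sided estimate: the lower bound comes from fixing $v_0$ and Taylor-expanding $L$ in $x$, while the upper bound comes from a decomposition in which one bracket is nonpositive by optimality of $v_0$ and the other is handled via continuity of $v(\cdot,p_0)$ (which rests on Proposition \ref{LegendreHomeo}). Your $x$-estimate is in essence the proof of the paper's envelope lemma specialized to that partial; the paper's packaging is more modular, while yours is more self-contained for that step and more reliant on a quoted conjugacy theorem for the other. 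Your treatments of (b$^*$), (d$^*$), (a'$^*$) and (b'$^*$) match the paper's in substance, with minor cosmetic differences (you bound $L$ on the sphere $\lVert v\rVert_y=A$, the paper on the ball; you quote strict convexity of the conjugate, the paper proves it directly from uniqueness of maximizers).
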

\begin{proof} To show differentiability, using a chart in $M$, we can assume that $M=U$ is an open subset in $\R^m$. Moreover, since all Riemannian metrics are equivalent above compact subsets, replacing $U$ by an open subset $V$ with compact closure contained in $U$, we can assume that the norm used in (c) of Definition
 \ref{TonelliLagrangian} is the constant standard Euclidean norm
$\lVert\cdot\rVert_{\rm{euc}}$ on the second factor of $TV=V\times \R^m$, that is
$$
\forall x\in V,\ \forall v\in \R^m, \quad L(x,v)\geq A\lVert v\rVert_{\rm{euc}}+C(A),
$$
where $C(A)$ is a finite constant, and $\sup _{x\in V}L(x,0)\leq C<+\infty$.

We have $T^*V=V\times \R^{m*}$, where $\R^{m*}$ is the dual space of $\R^{m}$. We will denote by $\lVert\cdot\rVert_{\rm{euc}}$ also the dual norm on $\R^{m*}$ obtained from $\lVert\cdot\rVert_{\rm{euc}}$ on $\R^{m}$. We now fix $R>0$. If $p\in \R^{m*}$ satisfies $\lVert p\rVert_{\rm{euc}}\leq R$, we have
\begin{align*}
p(v)-L(x,v)&\leq \lVert p\rVert_{\rm{euc}}\lVert v\rVert_{\rm{euc}}-(R+1)\lVert v\rVert_{\rm{euc}}-C(R+1)\\
&\leq -\lVert v\rVert_{\rm{euc}}-C(R+1).
\end{align*}
Since $L(x,0)\leq C$ for $x\in V$, it follows that,
for $\lVert v\rVert_{\rm{euc}}>C-C(R+1)$,
$$
p(v)-L(x,v) \leq -C \leq -L(x,0).
$$
This implies
$$
H(x,p)=\sup_{v\in \R^m}p(v)-L(x,v)=\sup_{\lVert v\rVert_{\rm{euc}}\leq C-C(R+1)}p(v)-L(x,v),
$$
Therefore the sup in the definition of $H(x,p)$ is attained at a point $v_{(x,p)}$ with
$\lVert v_{(x,p)}\rVert_{\rm{euc}}\leq C-C(R+1)$. Note that this point $v_{(x,p)}$ is unique (compare with the argument proving that the Legendre transform is surjective). In fact, at its maximum $v_{(x,p)}$, the C$^1$ function $v\mapsto p(v)-L(x,v)$ must have $0$ derivative, and therefore
$$
p=\frac{\partial L}{\partial v}(x,v_{(x,p)}).
$$
This means $(x,p)=\Leg(x,v_{(x,p)})$, but the Legendre transform is injective by Proposition \ref{LegendreHomeo}.

Note, furthermore, that the map
$$
f:\bigl(V\times\{\lVert p\rVert_{\rm{euc}}\leq R\} \bigr)\times
\{\lVert v\rVert_{\rm{euc}}\leq C-C(R+1)\}\to \R,
$$
$$
((x,p),v)\mapsto p(v)-L(x,v),
$$
is {\rm C}$^1$.
Therefore we obtain that $H$ is C$^1$ from the following classical lemma whose proof is left to the reader.
\begin{lemma}{\sl Let $f:N\times K\to \R, (x,k)\mapsto f(x,k)$ be a continuous map, where $N$ is a manifold, and $K$ is a compact space.
Define $F:N\to\R$ by $F(x)=\sup_{k\in K}f(x,k)$.
Suppose that:
\begin{enumerate}
\item
$\displaystyle\frac{\partial f}{\partial x}(x,k)$ exists everywhere and is continuous as a function of both variables $(x,k)$;
\item for every $x\in N$, the set $\{k\in K\mid f(x,k)=F(x)\}$ is reduced to a single point, which we will denote by $k_x$.
\end{enumerate}
Then $F$ is {\rm C}$^1$, and the derivative $D_xF$ of $F$ at $x$ is given by
$$D_xF=\frac{\partial f}{\partial x}(x,k_x).$$}
\end{lemma}
Returning to the proof of Proposition \ref{propertiesH}, by the last statement of the above lemma
we also obtain
$$
\frac{\partial H}{\partial p}(x,p)=v_{(x,p)}\quad \text{and} \quad
\frac{\partial H}{\partial x}(x,p)=-\frac{\partial L}{\partial x}(x,v_{(x,p)})$$
Since $(x,p)=\Leg(x, v_{(x,p)})$, this can be rewritten as
\begin{equation}\label{dHdp}\frac{\partial H}{\partial p}\circ \Leg(x,v)=v\quad \text{and}
\quad \frac{\partial H}{\partial x}\circ \Leg(x,v)=-\frac{\partial L}{\partial x}(x,v),
\end{equation}
which proves (a$^*$).
Note that when $L$ is a C$^r$ Tonelli Lagrangian, by Proposition \ref{LegendreHomeo} the Legendre transform
$\Leg$ is a C$^{r-1}$ global diffeomorphism. From the expression of the partial derivatives above, we conclude that $\partial H/\partial p$ and $\partial H/\partial x$ are both
C$^{r-1}$. This proves (a'$^*$).

We now prove (b'$^*$). Taking the derivative in $v$ of the first equality in (\ref{dHdp})
$$
\frac{\partial H}{\partial p}\left[ x,\frac{\partial L}{\partial v}(x,v)\right]=v,
$$
we obtain the matrix equation
$$
\frac{\partial^2 H}{\partial p^2}(\Leg(x,v))\cdot\frac{\partial^2 L}{\partial v^2}(x,v)
={\rm Id}_{\R^m},
$$
where the dot $\cdot$ represents the usual product of matrices. This means that the matrix representative of ${\partial^2H}/{\partial p^2}(x,p)$ is the inverse of the matrix of a positive definite quadratic form, therefore ${\partial^2H}/{\partial p^2}(x,p)$ is itself positive definite.

We prove (b$^*$). Suppose $p_1\neq p_2$ are both in $T^*_xM$. Fix $t\in]0,1[$, and set $p_3=tp_1+(1-t)p_2$. The covectors $p_1,p_2,p_3$ are all distinct. Call $v_1,v_2,v_3$ elements in $T_xM$ such that $p_i=\partial L/\partial v(x,v_i)$. By injectivity of the Legendre transform, the tangent vectors $v_1,v_2,v_3$ are also all distinct. Moreover, for $i=1,2$ we have
$$
H(x,p_i)=p_i(v_i)-L(x,v_i),
$$
$$
H(x,p_3)=p_3(v_3)-L(x,v_3)= t[p_1(v_3)-L(x,v_3)]+(1-t)[p_2(v_3)-L(x,v_3)].
$$
Since the sup in the definition of $H(x,p)$ is attained at a unique point, and $v_1,v_2,v_3$ are all distinct, for $i=1,2$ we must have
$$
p_i(v_3)-L(x,v_3)<p_i(v_i)-L(x,v_i)=H(x,p_i).
$$
It follows that
$$
H(x,tp_1+(1-t)p_2)< tH(x,p_1)+ (1-t)H(x,p_2).
$$

It remains to prove (d$^*$). Fix a compact set $K$ in $M$. Since
$$
H(x,p)\geq p(v)-L(x,v),
$$
we obtain
$$
H(x,p)\geq \sup_{\rVert v\lVert_x\leq A}p(v)+
\inf_{x\in K,\rVert v\lVert_x\leq A}-L(x,v).
$$
But $C^*(A,K)=\inf_{x\in K,\rVert v\lVert_x\leq A}-L(x,v)$  is finite by compactness,
and $\sup_{\rVert v\lVert_x\leq A}p(v)=A\rVert p\lVert_x$.
\end{proof}
Since for a weak Tonelli Lagrangian $L$, the Hamiltonian $ H:T^*M\to\R$ is C$^1$, we can define the Hamiltonian vector field $X_H$ on $T^*M$. This is rather standard and uses the fact that the exterior derivative of the Liouville form on $M$ defines a symplectic form on $M$, see \cite{AbrahamMarsden} or \cite{HoferZehnder}. The vector field $X_H$ is entirely characterized by the fact that in coordinates obtained from a chart in $M$, it is given by
$$
X_H(x,p)=(\frac{\partial H}{\partial p}(x,p),-\frac{\partial H}{\partial x}(x,p)).
$$
So the associated ODE is given by
\begin{equation*}
\left\{
\begin{array}{l}
\dot x=\dfrac{\partial H}{\partial p}(x,p)\\
\dot p=-\dfrac{\partial H}{\partial x}(x,p).
\end{array}
\right.
\end{equation*}
In this form, it is an easy exercise to check that $H$ is constant on any solution of $X_H$.

We know come to the simple and important connection between minimizers and solutions of $X_H$.
\begin{theorem}\label{ConnectionWithH}{\sl Suppose $L$ is a weak Tonelli Lagrangian on $M$. If $\gamma:[a,b]\to M$ is a minimizer for $L$, then the Legendre transform of its speed curve
$t\mapsto \Leg(\gamma(t),\dot\gamma(t))$ is a C$^1$ solution of the Hamiltonian  vector field $X_H$ obtained from the Hamiltonian $H$ associated to $L$.

Moreover, if $L$ is a Tonelli Lagrangian, there exists a (partial) {\rm C}$^1$ flow $\phi^L_t$ on $TM$ such that every speed curve of an $L$-minimizer is a part of an orbit of $\phi^L_t$. This flow is called the Euler-Lagrange flow, is defined by
$$\phi^L_t=\Leg^{-1}\circ \phi^H_t \circ \Leg,$$
where $\phi^H_t$ is the partial flow of the {\rm C}$^1$ vector filed $X_H$.}
\end{theorem}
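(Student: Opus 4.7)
The plan is to read off Hamilton's equations directly from Theorem~\ref{regularityminimizer} together with the formulas for the partial derivatives of $H$ provided by Proposition~\ref{propertiesH}(a$^*$). Given an $L$-minimizer $\gamma:[a,b]\to M$, Theorem~\ref{regularityminimizer} tells us that $\gamma$ is $C^1$ and that the curve $t\mapsto p(t):=\partial L/\partial v(\gamma(t),\dot\gamma(t))$ is itself $C^1$, with derivative equal to $\partial L/\partial x(\gamma(t),\dot\gamma(t))$. Hence the curve $t\mapsto\Leg(\gamma(t),\dot\gamma(t))=(\gamma(t),p(t))$ is $C^1$.

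First I would verify that $(\gamma(t),p(t))$ is an integral curve of $X_H$. Setting $(x,v)=(\gamma(t),\dot\gamma(t))$ and applying Proposition~\ref{propertiesH}(a$^*$) gives
\[
\frac{\partial H}{\partial p}(\gamma(t),p(t))=\dot\gamma(t),\qquad \frac{\partial H}{\partial x}(\gamma(t),p(t))=-\frac{\partial L}{\partial x}(\gamma(t),\dot\gamma(t))=-\dot p(t),
\]
which is precisely the system $\dot x=\partial H/\partial p$, $\dot p=-\partial H/\partial x$ defining $X_H$. This is local (it takes place in a single chart), but since the statement is pointwise and $\gamma$ can be covered by finitely many chart domains, the conclusion is global.

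For the second part, when $L$ is Tonelli, Proposition~\ref{propertiesH}(a'$^*$) and~(b'$^*$) give that $H$ is $C^2$ and its Hessian in $p$ is positive definite; in particular $X_H$ is a $C^1$ vector field on $T^*M$, so the classical ODE theory produces a partial $C^1$ flow $\phi^H_t$ on $T^*M$. Proposition~\ref{LegendreHomeo} gives that $\Leg:TM\to T^*M$ is a $C^1$ diffeomorphism, so the formula
\[
\phi^L_t := \Leg^{-1}\circ\phi^H_t\circ\Leg
\]
defines a partial $C^1$ flow on $TM$. By the first part, if $\gamma$ is an $L$-minimizer then $t\mapsto\Leg(\gamma(t),\dot\gamma(t))$ is an orbit of $\phi^H_t$, so applying $\Leg^{-1}$ shows that $t\mapsto(\gamma(t),\dot\gamma(t))$ is an orbit of $\phi^L_t$.

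The one subtlety that deserves care is justifying that the $C^1$ regularity of $p(t)$ actually produces an integral curve of $X_H$ in the classical sense: since $\gamma$ is only assumed $C^1$ a priori (not $C^2$), we cannot differentiate $\dot\gamma$ directly, and must instead rely on (IEL) to get $C^1$ regularity of $p(t)$, and then use the injectivity and continuity of $\Leg^{-1}$ (Proposition~\ref{LegendreHomeo}) to transport the equation back to $TM$. No further differentiability of $\gamma$ is needed for the first statement; the extra smoothness in the Tonelli case is used only to guarantee the existence and $C^1$ regularity of the Hamiltonian flow itself.
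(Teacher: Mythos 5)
Your proof is correct and follows essentially the same route as the paper: both apply Theorem~\ref{regularityminimizer} to get $C^1$ regularity of $t\mapsto p(t)=\partial L/\partial v(\gamma(t),\dot\gamma(t))$ together with $\dot p=\partial L/\partial x$, then substitute the formulas of Proposition~\ref{propertiesH}(a$^*$) to read off Hamilton's equations, and for the Tonelli case invoke the $C^2$ regularity of $H$ and the $C^1$ Legendre diffeomorphism to conjugate $\phi^H_t$ into $\phi^L_t$. The only small inaccuracy is in your closing remark: no ``transporting back to $TM$ via $\Leg^{-1}$'' is needed to establish the first statement, since the assertion that $(x(t),p(t))$ solves the Hamiltonian ODE lives entirely in $T^*M$ and follows directly from $\dot x=\dot\gamma=\partial H/\partial p$ and $\dot p=\partial L/\partial x=-\partial H/\partial x$; $\Leg^{-1}$ only enters in the second (Tonelli) part.
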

\begin{proof} If we write $(x(t),p(t))=\Leg(\gamma(t),\dot\gamma(t))$ then
$$
x(t)=\gamma(t)\quad \text{and} \quad
p(t)=\frac{\partial L}{\partial v}(\gamma(t),\dot\gamma(t)).
$$
By Theorem \ref{regularityminimizer}, $x(t)=\gamma(t)$ is C$^1$ with $\dot x(t)=\dot\gamma(t)$. The fact that $p(t)$ is C$^1$ follows again from Theorem \ref{regularityminimizer}, which also yields in local coordinates
$$
\dot p(t)=\frac{\partial L}{\partial x}(\gamma(t),\dot\gamma(t)).
$$
Since $(x(t),p(t))=\Leg(\gamma(t),\dot\gamma(t))$, we conclude from Proposition
\ref{propertiesH} that $t\mapsto (x(t),p(t))$ satisfies the ODE
\begin{equation*}
\left\{
\begin{array}{l}
\dot x=\dfrac{\partial H}{\partial p}(x,p)\\
\dot p=-\dfrac{\partial H}{\partial x}(x,p).
\end{array}
\right.
\end{equation*}
Therefore the Legendre transform of the speed curve of a minimizer is a solution of the Hamiltonian vector field $X_H$.

If $L$ is a Tonelli Lagrangian, by Proposition \ref{propertiesH} the Hamiltonian $H$ is C$^2$. Therefore the vector field $X_H$ is C$^1$, and it defines a (partial) C$^1$ flow $\phi^H_t$. The rest follows from what was obtained above and the fact that the Legendre transform is C$^1$.
\end{proof}
We recall the following definition
\begin{definition}[Energy]{\rm If $L$ is a {\rm C}$^1$ Lagrangian on the manifold $M$,
its \textit{energy} $E:TM \rightarrow \R$ is defined by
$$
E(x,v) =H\circ \Leg(x,v)=
\frac{\partial L}{\partial v}(x,v)(v)-L(x,v).
$$}
\end{definition}
\begin{corollary}
\label{corConsEnergy}[Conservation of Energy]{\sl If $L$ is a {\rm
C}$^1$ Lagrangian on the manifold $M$, and $\gamma:[a,b]\to M$ is
a {\rm C}$^1$ minimizer for $L$, then the energy $E$ is constant
on the speed curve
$$
s \mapsto (\gamma(s),\dot \gamma(s)).
$$}
\end{corollary}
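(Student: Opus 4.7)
The plan is to prove the conservation of energy by a time-reparametrization (Noether-type) argument applied to the minimality condition. This avoids differentiating $E(\gamma(t),\dot\gamma(t))$ directly in $t$, which would naively demand that $\gamma$ be $C^2$ (and we only know $\gamma$ is $C^1$).

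Fix any $\eta\in C^1([a,b],\R)$ with $\eta(a)=\eta(b)=0$. For $|s|$ small enough that $1+s\dot\eta(t)>0$ on $[a,b]$, the map $\phi_s:[a,b]\to[a,b]$, $\phi_s(t)=t+s\eta(t)$, is an orientation-preserving $C^1$ diffeomorphism fixing the endpoints. Set $\gamma_s=\gamma\circ\phi_s^{-1}$, a $C^1$ curve on $M$ with the same endpoints as $\gamma$. The identity $\dot\gamma_s(\phi_s(t))=\dot\gamma(t)/(1+s\dot\eta(t))$ (an equality of tangent vectors at $\gamma(t)$), together with the change of variables $u=\phi_s(t)$, gives
$$
\mathbb{A}_L(\gamma_s)=\int_a^b L\!\left(\gamma(t),\frac{\dot\gamma(t)}{1+s\dot\eta(t)}\right)(1+s\dot\eta(t))\,dt.
$$

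Because $L$ is $C^1$ and the integrand is jointly continuous in $(s,t)$ with bounded partial derivative in $s$ on $[-s_0,s_0]\times[a,b]$, I may differentiate under the integral at $s=0$. The chain rule and the fact that $\frac{d}{ds}\bigl(\dot\gamma(t)/(1+s\dot\eta(t))\bigr)\big|_{s=0}=-\dot\gamma(t)\dot\eta(t)$ yield
$$
\left.\frac{d}{ds}\right|_{s=0}\mathbb{A}_L(\gamma_s)=-\int_a^b \left[\frac{\partial L}{\partial v}(\gamma(t),\dot\gamma(t))(\dot\gamma(t))-L(\gamma(t),\dot\gamma(t))\right]\dot\eta(t)\,dt=-\int_a^b E(\gamma(t),\dot\gamma(t))\,\dot\eta(t)\,dt.
$$
Since $\gamma$ is an $L$-minimizer and each $\gamma_s$ is an admissible competitor (same endpoints, absolutely continuous), this derivative must vanish.

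Thus the continuous function $t\mapsto E(\gamma(t),\dot\gamma(t))$ satisfies $\int_a^b E(\gamma(t),\dot\gamma(t))\,\dot\eta(t)\,dt=0$ for every $\eta\in C^1([a,b])$ with $\eta(a)=\eta(b)=0$. By the classical du Bois--Reymond lemma, a continuous function on $[a,b]$ whose integral against every such $\dot\eta$ vanishes is constant, which is exactly the claimed conservation of energy. The only delicate point is the differentiation under the integral sign, but this is routine since $L$ is $C^1$, $\gamma$ is $C^1$ on the compact interval $[a,b]$, and $1+s\dot\eta(t)$ is uniformly bounded away from $0$ for $|s|$ small; no essential obstacle remains.
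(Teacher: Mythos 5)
Your proof is correct, and it takes a genuinely different route from the paper. The paper deduces conservation of energy from the Hamiltonian formalism: by Theorem~\ref{ConnectionWithH}, the image of the speed curve under the global Legendre transform is an orbit of $X_H$, and $H$ is conserved along orbits of its own vector field; since $E = H\circ\Leg$, the energy is constant. You instead run a classical Noether-type ``inner variation'' argument: reparametrizing $\gamma$ by $\phi_s(t)=t+s\eta(t)$, computing $\mathbb{A}_L(\gamma_s)$ by change of variables, differentiating at $s=0$, and invoking the du Bois--Reymond lemma on the resulting identity $\int_a^b E(\gamma(t),\dot\gamma(t))\dot\eta(t)\,dt=0$. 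Each approach has its advantages. The paper's proof is a one-liner once the machinery of Proposition~\ref{propertiesH} and Theorem~\ref{ConnectionWithH} is in place, but that machinery requires the weak Tonelli hypotheses (strict convexity, superlinearity, a well-defined Legendre homeomorphism, and the regularity Theorem~\ref{regularityminimizer} whose proof in turn leans on energy conservation, so one must be slightly careful about the logical order). Your argument is self-contained and needs only that $L$ is $C^1$ and $\gamma$ is a $C^1$ minimizer --- exactly the stated hypotheses of the corollary --- with no appeal to convexity, to the Legendre transform, or to the Euler--Lagrange/Hamiltonian ODEs; it thus actually proves the statement at the level of generality at which it is asserted, and it cleanly sidesteps any apparent circularity between this corollary and the regularity theory. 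The computation $\frac{d}{ds}\big|_{s=0}\mathbb{A}_L(\gamma_s)=-\int_a^b E(\gamma(t),\dot\gamma(t))\dot\eta(t)\,dt$ is correct (the two product-rule terms give $-\frac{\partial L}{\partial v}(\dot\gamma)\dot\eta + L\dot\eta = -E\dot\eta$), and differentiating under the integral sign is indeed justified by the uniform bounds you cite.
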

\begin{proof} In fact $E(\gamma(s),\dot\gamma(s))=H\circ \Leg(\gamma(s),\dot\gamma(s))$. But $s\mapsto\Leg(\gamma(s),\dot\gamma(s))$ is a solution of
the vector field $H$, and the Hamiltonian $H$ is constant on orbits of $X_H$.
\end{proof}
\begin{proposition}
\label{propEnergyCpt}{\sl If $L$ is a weak Tonelli Lagrangian on
the manifold $M$, then for every compact subset $K \subset M$, and
every $C <+\infty$, the set
$$
\{(x,v) \in TM \mid x \in K, \ E(x,v) \leq C\}
$$
is compact, i.e.\ the map $E:TM\to\R$ is proper on every subset of the form $\pi^{-1}(K)$, where $K$ is a compact subset of $M$.}
\end{proposition}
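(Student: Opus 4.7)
The plan is to transfer the problem from the tangent bundle to the cotangent bundle via the global Legendre transform, and then exploit the superlinearity of $H$ in the fibers.

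First, recall from the definition of the energy that $E=H\circ\Leg$, and that $\Leg:TM\to T^*M$ preserves the base point. Consequently
$$
\{(x,v)\in TM\mid x\in K,\ E(x,v)\leq C\}
=\Leg^{-1}\bigl(S_{K,C}\bigr),
$$
where $S_{K,C}=\{(x,p)\in T^*M\mid x\in K,\ H(x,p)\leq C\}\subset T_K^*M$. Since, by Proposition \ref{LegendreHomeo}, the map $\Leg$ is a homeomorphism of $TM$ onto $T^*M$, it suffices to show that $S_{K,C}$ is compact, for then its image under the continuous map $\Leg^{-1}$ will be compact too.

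Next I verify compactness of $S_{K,C}$. Since $H$ is continuous (in fact $C^1$, by Proposition \ref{propertiesH}(a$^*$)) and $K$ is closed in $M$, the set $S_{K,C}$ is closed in $T^*M$. To see boundedness in the fibers, apply the superlinearity property \textbf{(d$^*$)} of Proposition \ref{propertiesH} with $A=1$: there exists a finite constant $C^*(1,K)$ such that
$$
H(x,p)\geq \lVert p\rVert_x+C^*(1,K),\qquad \forall (x,p)\in T_K^*M.
$$
For $(x,p)\in S_{K,C}$ this yields $\lVert p\rVert_x\leq C-C^*(1,K)$, i.e., $p$ lies in a fixed-radius ball in $T_x^*M$. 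Thus $S_{K,C}$ is a closed subset of the closed ball-bundle $\{(x,p)\in T^*M\mid x\in K,\ \lVert p\rVert_x\leq C-C^*(1,K)\}$, which is a compact subset of $T_K^*M$ (continuous norm-bounded sections over a compact base in a vector bundle). Hence $S_{K,C}$ itself is compact.

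Combining the two steps, $\Leg^{-1}(S_{K,C})$ is the continuous image of a compact set, hence compact. The properness statement on sets of the form $\pi^{-1}(K)$ then follows: if $A\subset\R$ is compact, say $A\subset(-\infty,C]$, then $E^{-1}(A)\cap\pi^{-1}(K)$ is a closed subset of the compact set $\{(x,v)\in TM\mid x\in K,\ E(x,v)\leq C\}$, and is therefore compact. There is no serious obstacle here; the only point to watch is that the bound $\lVert p\rVert_x\leq C-C^*(1,K)$ uses the superlinearity of $H$, which is itself inherited from the superlinearity condition \textbf{(d)} on $L$ via the Legendre duality already established in Proposition \ref{propertiesH}.
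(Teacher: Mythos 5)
Your proof is correct and follows the same route as the paper: write $E = H\circ\Leg$, note that $\Leg$ is a homeomorphism (Proposition \ref{LegendreHomeo}), and reduce to showing compactness of the sublevel set $\{(x,p)\in T_K^*M\mid H(x,p)\leq C\}$. The paper dispatches this in one line by invoking the properness of $H$ from Proposition \ref{propertiesH}(d$^*$); you instead unwind exactly what that properness amounts to over $T_K^*M$, deriving the fiberwise bound $\lVert p\rVert_x\leq C-C^*(1,K)$ from superlinearity with $A=1$ — which is in fact more careful than the paper's terse appeal, since on a non-compact $M$ it is precisely the restriction to $\pi^{-1}(K)$ that makes the sublevel sets compact.
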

\begin{proof} Since $E=H\circ \Leg$, this follows from the fact that $H$ is proper
and $\Leg$ is a homeomorphism.
\end{proof}
\begin{proposition}\label{estimationspeed}{\sl Let $L$ be a weak Tonelli Lagrangian on $M$.
Suppose $K$ is a compact subset of $M$, and $t>0$. Then we can find a compact subset $\tilde K \subset M$ and
a finite constant $A$, such that every minimizer $\gamma:[0,t] \rightarrow M$ with $\gamma(0), \gamma (t) \in K$
satisfies $\gamma([0,t]) \subset \tilde K$
and $\lVert \dot \gamma(s) \rVert_{\gamma(s)} \leq A$ for every $s \in [0,t]$.}
\end{proposition}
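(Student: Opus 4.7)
The plan combines a uniform upper bound on the action of such minimizers, the coercivity hypothesis (c), conservation of energy, and the properness of $E$ proved in Proposition \ref{propEnergyCpt}. First I would bound the action. Fix $x_0\in K$ and let $D=\mathrm{diam}_g(K)$, which is finite by compactness. For each $x\in K$, pick a $g$-geodesic $\alpha_x:[0,t/2]\to M$ from $x_0$ to $x$ reparametrized at constant speed at most $2D/t$. Given $x,y\in K$, the concatenation of $\alpha_x$ reversed with $\alpha_y$ is an absolutely continuous curve from $x$ to $y$ on $[0,t]$ whose trace lies in the closed ball $\bar B_g(x_0,D)$ (compact by Hopf--Rinow) and whose $g$-speed is at most $2D/t$ almost everywhere. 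Since $L$ is continuous, it is bounded above by some $M_0$ on the corresponding compact subset of $TM$, and the action of this curve is at most $M_0 t=:M_K$. Hence every minimizer $\gamma$ with $\gamma(0),\gamma(t)\in K$ satisfies $\mathbb A_L(\gamma)\leq M_K$.

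Next I would localize the trace and bound the energy. From condition (c), $L(z,v)\geq\|v\|_z+C$, so $\operatorname{length}_g(\gamma)\leq M_K-Ct$; therefore $d_g(x_0,\gamma(s))\leq D+M_K-Ct=:R$ for every $s\in[0,t]$, and $\gamma([0,t])$ is contained in $\tilde K:=\bar B_g(x_0,R)$, compact by Hopf--Rinow. By Theorem \ref{regularityminimizer}, $\gamma$ is of class {\rm C}$^1$, so $s\mapsto L(\gamma(s),\dot\gamma(s))$ is continuous on $[0,t]$, and the mean value theorem yields $s^*\in[0,t]$ with $L(\gamma(s^*),\dot\gamma(s^*))\leq M_K/t$. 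Applying the superlinearity (d) on $\tilde K$ with $A=1$ produces a finite constant $C(1,\tilde K)$ such that
$$
\|\dot\gamma(s^*)\|_{\gamma(s^*)}\leq L(\gamma(s^*),\dot\gamma(s^*))-C(1,\tilde K)\leq M_K/t-C(1,\tilde K)=:B.
$$
Thus $(\gamma(s^*),\dot\gamma(s^*))$ lies in the compact set $\{(z,v)\in TM:z\in\tilde K,\ \|v\|_z\leq B\}$, on which the continuous energy $E$ attains a finite maximum $E_{\max}$. Corollary \ref{corConsEnergy} (conservation of $E$ along {\rm C}$^1$ minimizers) then gives $E(\gamma(s),\dot\gamma(s))\leq E_{\max}$ for every $s\in[0,t]$.

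To conclude, Proposition \ref{propEnergyCpt} ensures that the sublevel set $\{(z,v)\in TM:z\in\tilde K,\ E(z,v)\leq E_{\max}\}$ is compact, hence $(z,v)\mapsto\|v\|_z$ attains a finite maximum $A$ on it. Since $(\gamma(s),\dot\gamma(s))$ lies in this set for every $s$, we obtain $\|\dot\gamma(s)\|_{\gamma(s)}\leq A$, proving the proposition. The main subtlety I foresee is the first step: since this proposition is a prerequisite for establishing continuity and local semi-concavity of $c_{t,L}$, I must not appeal to those properties when producing the uniform action bound. The explicit concatenation via $g$-geodesics used above avoids this potential circularity and relies only on Hopf--Rinow and the continuity of $L$.
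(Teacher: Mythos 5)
Your proof is correct and follows essentially the same strategy as the paper: bound the action via an explicit geodesic comparison curve, deduce a length bound giving containment in a compact $\tilde K$, produce one time with bounded speed, and then combine conservation of energy with the properness of $E$ (Proposition \ref{propEnergyCpt}). The only differences are cosmetic: you concatenate two geodesics through $x_0$ rather than take a direct geodesic from $\gamma(0)$ to $\gamma(t)$, and you average $L(\gamma,\dot\gamma)$ and invoke superlinearity (d), whereas the paper averages $\lVert\dot\gamma\rVert$ directly via (c).
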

\begin{proof}
We will use as a distance $d$ the one coming from the complete Riemannian metric.
All finite closed balls in this distance are compact (Hopf-Rinow theorem).
We choose $x_0 \in K$, and $R$ such that $K \subset B(x_0,R)$ (we could take $R=\operatorname{diam}(K)$, the diameter of $K$).
We now pick $x,y \in K$. If $\alpha:[0,t] \rightarrow M$ is a geodesic with $\alpha(0)=x$, $\alpha(t)=y$
and whose length is $d(x,y)$ (such a geodesic exists by completeness), the inequality
$$
d(x,y) \leq d(x,x_0) + d(x_0,y) \leq 2R
$$
implies that $\alpha([0,t]) \subset \bar B(x_0,3R)$.
Moreover $\lVert \dot \alpha(s) \rVert_{\alpha(s)} = d(x,y)/t \leq 2R/t$ for every $s \in [0,t]$.
By compactness, the Lagrangian $L$ is bounded on the set
$$
\KK =\{(z,v) \in TM \mid z \in \bar B(x_0,3R), \ \lVert v \rVert_z \leq 2R/t \}.
$$
We call $\theta$ an upper bound of $L$ on $\KK$. Obviously the action of $\alpha$ on $[0,t]$ is less
than $t\theta$, and therefore if $\gamma:[0,t] \rightarrow M$ is a minimizer with $\gamma(0), \gamma (t) \in K$, we get
$\int_0^t L(\gamma(s),\dot \gamma(s)) \,ds\leq t\theta$. Using condition
(c) on the Lagrangian $L$ and what we obtained above, we see that
$$
Ct+\int_0^t \lVert \dot \gamma(s) \rVert_{\gamma(s)} \,ds \leq t\theta.
$$
It follows that we can find $s_0 \in [0,t]$ such that
$$
\lVert \dot \gamma(s_0) \rVert_{\gamma(s_0)} \leq \theta -C.
$$
Moreover
$$
\gamma([0,t]) \subset \bar B(\gamma(0),t(\theta -C)) \subset \bar B(x_0,R+t(\theta -C)).
$$
We set $\tilde K = \bar B(x_0,R+t(\theta -C))$. If we define
$$
\theta_1 =\sup \{ E(z,v) \mid (z,v) \in TM, \ z \in \tilde K,\ \lVert v \rVert_z \leq \theta -C\},
$$
we see that $\theta_1$ is finite by compactness. Moreover $E(\gamma(s_0),\dot \gamma(s_0)) \leq \theta_1$.
But, as mentioned earlier, the energy $E(\gamma(s),\dot \gamma(s))$ is constant on the curve.
This implies that the speed curve
$$
s \mapsto (\gamma(s),\dot \gamma(s))
$$
is contained in the compact set
$$
\tilde \KK =\{(z,v) \in TM \mid z \in \tilde K,\ E(z,v) \leq \theta_1 \}.
$$
Observing that the set $\tilde \KK$ does not depend on $\gamma$, this finishes the proof.
\end{proof}
\subsection{Lagrangian costs and semi-concavity}
\begin{definition}[Costs for a Lagrangian]\label{CostLagrangian}{\rm Suppose $L:TM\to\R$ is a Lagrangian on the connected manifold $M$, which is bounded from below. For $t>0$,  we define the \textit{cost} $c_{t,L}:M \times M \rightarrow \R$ by
$$
c_{t,L}(x,y) =\inf_{\gamma(0)=x,\gamma(t)=y} \mathbb{A}_L(\gamma)
$$
where the infimum is taken over all the absolutely continuous curves
$\gamma:[0,t]\to M$, with $\gamma(0)=x$, and $\gamma(t)=y$, and
$\mathbb{A}_L(\gamma)$ is the action
$\int_0^t L(\gamma(s),\dot \gamma(s)) \,ds$ of $\gamma$.}
\end{definition}
Using a change of variable in the integral defining the action, it is not difficult to see  that $c_{t,L}=c_{1, L^t}$ where the Lagrangian $L^t$ on $M$ is defined by
$L^t(x,v)=t L(x,t^{-1}v)$. Observe that $L^t$ is a (weak) Tonelli Lagrangian if $L$ is.
\begin{theorem}\label{costsemiconc}{\sl Suppose that $L:TM\to \R$
is a weak Tonelli Lagrangian.
Then, for every $t>0$, the
cost $c_{t,L}$ is locally semi-concave on $M \times M$.
Moreover, if the derivative of $L$ is locally Lipschitz, then $c_{t,L}$ is locally semi-concave with a linear modulus.

In particular, if $L$ is a Tonelli Lagrangian for every $t>0$, the
cost $c_{t,L}$ is locally semi-concave on $M \times M$ with a linear modulus.}
\end{theorem}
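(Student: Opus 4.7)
The plan is to exhibit, at each base point $(x_0,y_0)\in M\times M$, an explicit $C^1$ upper barrier $f$ for $c_{t,L}$---i.e.\ a $C^1$ function defined on a neighborhood of $(x_0,y_0)$ with $f\geq c_{t,L}$ there and $f(x_0,y_0)=c_{t,L}(x_0,y_0)$---so that the first-order Taylor expansion of $f$ supplies both the linear form and the modulus in the definition of semi-concavity. The barrier comes from a concrete competitor curve. By Theorem \ref{existenceminimizer} I fix a minimizer $\gamma_0:[0,t]\to M$ from $x_0$ to $y_0$, choose charts $\phi_0,\phi_t$ around $x_0$ and $y_0$, and pick $\varepsilon>0$ so small that $\gamma_0([0,\varepsilon])$ lies in the first chart and $\gamma_0([t-\varepsilon,t])$ in the second. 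Fix smooth cutoffs $\psi_0,\psi_t:[0,t]\to[0,1]$ with $\psi_0(0)=1$, $\psi_0\equiv 0$ on $[\varepsilon,t]$ and $\psi_t(t)=1$, $\psi_t\equiv 0$ on $[0,t-\varepsilon]$. For $(x,y)$ near $(x_0,y_0)$ define a competitor $\gamma_{x,y}$ by translating $\gamma_0$ in chart coordinates by $\psi_0(s)(\phi_0(x)-\phi_0(x_0))$ on $[0,\varepsilon]$, leaving $\gamma_0$ unchanged on $[\varepsilon,t-\varepsilon]$, and translating by $\psi_t(s)(\phi_t(y)-\phi_t(y_0))$ on $[t-\varepsilon,t]$. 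Then $\gamma_{x,y}(0)=x$, $\gamma_{x,y}(t)=y$, so $c_{t,L}(x,y)\leq \mathbb{A}_L(\gamma_{x,y})=:f(x,y)$ with equality at $(x_0,y_0)$.

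Because the perturbation is localized near the endpoints, $f$ decouples as $f(x,y)=g_0(x)+K+g_t(y)$, where $K=\int_\varepsilon^{t-\varepsilon}L(\gamma_0,\dot\gamma_0)\,ds$ is constant and $g_0,g_t$ come from the two endpoint integrals. Since $L\in C^1$ and each integral is over a bounded time interval, differentiating under the integral sign shows $g_0,g_t\in C^1$, hence $f\in C^1$. The inequality $c_{t,L}-c_{t,L}(x_0,y_0)\leq f-f(x_0,y_0)$ combined with the first-order Taylor expansion of $f$ then gives semi-concavity of $c_{t,L}$ at the single point $(x_0,y_0)$, with modulus equal to any modulus of continuity of $df$ near $(x_0,y_0)$. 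To upgrade this to a common modulus on a full neighborhood $W$ of $(x_0,y_0)$---as required by the definition of local semi-concavity---the crucial tool is Proposition \ref{estimationspeed}: if $\bar W$ is compact, every minimizer joining a pair of points in $\bar W$ has its speed curve in a single compact set $\tilde K\subset TM$. Running the construction above at every $(x_0^*,y_0^*)\in\bar W$ with the same charts, the same $\varepsilon$, and the same cutoffs, the $C^1$ modulus of each resulting barrier depends only on a uniform continuity modulus of $DL$ restricted to $\tilde K$; since $\tilde K$ is compact and $DL$ is continuous, this is a single modulus $\omega$, which serves as the semi-concavity modulus of $c_{t,L}$ on $W$.

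The refinement to linear moduli is then immediate: if $DL$ is locally Lipschitz, it is Lipschitz on the compact set $\tilde K$, so each $df_{(x_0^*,y_0^*)}$ is Lipschitz with a uniform constant, yielding a linear modulus. The \emph{in particular} statement for Tonelli Lagrangians follows automatically, since such an $L$ is $C^2$ and hence has a $C^1$---in particular locally Lipschitz---derivative. I expect the most delicate step to be the uniformity argument: one must check that the same chart choices, the same $\varepsilon$, and the same cutoffs really can be used for every $(x_0^*,y_0^*)\in\bar W$ (this is exactly what Proposition \ref{estimationspeed} guarantees), and that the resulting modulus of $df_{(x_0^*,y_0^*)}$ depends only on data on $\tilde K$. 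Once the compact set $\tilde K$ is in hand the rest is bookkeeping---careful, but routine---with differentiation under the integral separating the $x$- and $y$-dependence cleanly.
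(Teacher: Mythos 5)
Your proposal is correct and follows essentially the same route as the paper's proof: fix a minimizer, perturb it near the two endpoints via a cutoff in chart coordinates (the paper uses piecewise-affine cutoffs $\frac{\varepsilon-s}{\varepsilon}$, you use smooth ones — a cosmetic difference), and bound the cost by the action of the competitor; the uniform modulus then comes from the compactness of speed curves supplied by Proposition \ref{estimationspeed}, exactly as you say. Your framing in terms of an explicit $C^1$ upper barrier plus Taylor's theorem is mathematically identical to the paper's direct first-order expansion of the action difference, so there is nothing to add.
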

\begin{proof} By the remark preceding the statement of the theorem, it suffices to prove this for $c=c_{1,L}$.
Let $n$ be the dimension of $M$. Choose two charts $\varphi_i: U_i \stackrel{\sim}{\longrightarrow} \R^n$, $i=0,1$, on $M$.
We will show that
$$
(\tilde x_0,\tilde x_1) \mapsto c(\varphi_0^{-1}(\tilde x_0),\varphi_1^{-1}(\tilde x_1))
$$
is semi-concave on $\stackrel{\circ}{\B} \times \stackrel{\circ}{\B}$, where $\B$ is the closed Euclidean unit ball of center $0$ in $\R^n$.
By Proposition \ref{estimationspeed}, we can find a constant $A$ such that for every minimizer $\gamma:[0,1] \rightarrow M$,
with $\gamma(i) \in \varphi_i^{-1}(\B)$, we have
$$
\forall s \in [0,1], \quad \lVert \dot \gamma(s) \rVert_{\gamma(s)} \leq A.
$$
We now pick $\delta > 0$ such that for all $z_1,z_2 \in \R^n$, with $\Vert z_1\rVert_{\rm euc} \leq 1$, $\Vert z_2\rVert_{\rm euc}=2$,
$$
d(\varphi_i^{-1}(z_1),\varphi_i^{-1}(z_2)) \geq \delta, \quad i=0,1,
$$
where $\Vert \cdot \rVert_{\rm euc}$ denote the Euclidean norm.
Then we choose $\e > 0$ such that $A\e < \d$. It follows that
$$
\gamma([0,\e]) \subset \varphi_0^{-1}\bigl(2\stackrel{\circ}{\B}\bigr)\text{ and }
\gamma([1-\e,1]) \subset \varphi_1^{-1}\bigl(2\stackrel{\circ}{\B}\bigr).
$$
We set $\tilde x_i=\varphi_i(\gamma(i))$, $i=0,1$. For $h_0, h_1 \in \R^n$ we can define
$\tilde \gamma_{h_0} :[0,\e] \rightarrow \R^n$ and $\tilde \gamma_{h_1} :[1-\e,1] \rightarrow \R^n$ as
$$
\tilde \gamma_{h_0}(s)= \frac{\e - s}{\e}h_0 + \varphi_0(\gamma(s)), \ \ \ 0 \leq s \leq \e,
$$
$$
\tilde \gamma_{h_1}(s)= \frac{s-(1-\e)}{\e}h_1 + \varphi_1(\gamma(s)), \ \ \ 1-\e \leq s \leq 1.
$$
We observe that when $h_0=0$ (or $h_1=0$) the curve coincide with $\gamma$.
Moreover $\tilde \gamma_{h_0}(0)=\tilde x_0 + h_0$, $\tilde \gamma_{h_1}(1)=\tilde x_1 + h_1$.
We suppose that $\lVert h_i \rVert_{\rm euc} \leq 2$.
In that case the images of both $\tilde \gamma_{h_0}$ and  $\tilde \gamma_{h_0}$ are
contained in $4\stackrel{\circ}{\B}$ and
$$
\lVert \dot {{\tilde \gamma}}_{h_i}(s)\rVert_{\rm euc} \leq \lVert h_i \rVert_{\rm euc} + \lVert (\varphi_i \circ \gamma)'(s) \rVert_{\rm euc}
\leq 2 + \lVert (\varphi_i \circ \gamma)'(s) \rVert_{\rm euc}.
$$
Since we know that the speed of $\gamma$ is bounded in $M$, we can find a constant $A_1$ such that
$$
\forall s \in [0,\e], \quad \lVert \dot {{\tilde \gamma}}_{h_0}(s)\rVert_{\rm euc} \leq A_1,
$$
$$
\forall s \in [1-\e,1], \quad \lVert \dot {{\tilde \gamma}}_{h_1}(s)\rVert_{\rm euc} \leq A_1.
$$
To simplify a little bit the notation, we define the Lagrangian $L_i:\R^n \times \R^n \rightarrow \R$ by
$$
L_i(z,v)=L(\varphi_i^{-1}(z),D[\varphi_i^{-1}](v)).
$$
If we concatenate the three curves $\varphi_0^{-1} \circ \tilde \gamma_{h_0}$, $\gamma|_{[\e,1-\e]}$ and
$\varphi_1^{-1} \circ \tilde \gamma_{h_1}$, we obtain a curve in $M$ between
$\varphi_0^{-1}(\tilde x_0 + h_0)$ and $\varphi_1^{-1}(\tilde x_1 + h_1)$, and therefore
\begin{multline*}
c\left(\varphi_0^{-1}(\tilde x_0 + h_0),\varphi_1^{-1}(\tilde x_1 + h_1)\right)
\leq \int_0^\e L_0(\tilde \gamma_{h_0}(t),\dot {\tilde \gamma}_{h_0}(t)) \,dt \\
+ \int_\e^{1-\e} L(\gamma(t),\dot \gamma(t)) \,dt
+ \int_{1-\e}^1 L_1(\tilde \gamma_{h_1}(t),\dot {\tilde \gamma}_{h_1}(t)) \,dt.
\end{multline*}
Hence
\begin{multline*}
c\left(\varphi_0^{-1}(\tilde x_0 + h_0),\varphi_1^{-1}(\tilde x_1 + h_1)\right)
- c\left(\varphi_0^{-1}(\tilde x_0),\varphi_1^{-1}(\tilde x_1)\right) \\
\leq \int_0^\e\left[ L_0(\tilde \gamma_{h_0}(t),\dot {\tilde \gamma}_{h_0}(t))
- L_0(\varphi_0 \circ \gamma(t),(\varphi_0 \circ \gamma)'(t))\right] \,dt\\
+ \int_{1-\e}^1 \left[L_1(\tilde \gamma_{h_1}(t),\dot {\tilde \gamma}_{h_1}(t))
- L_1(\varphi_1 \circ \gamma(t),(\varphi_1 \circ \gamma)'(t))\right] \,dt.
\end{multline*}
We now call $\omega$ a common modulus of continuity for the derivative $DL_0$ and $DL_1$ on the compact set $\bar{B}(0,4)\times \bar B(0,A_1)$. Here $DL_0$ and $DL_1$ denote the total derivatives of $L_0$ and $L_1$, i.e.~with respect to all variables. When $L$ has  a derivative which is locally Lipschitz, then  $DL_0$ and $DL_1$ are also locally Lipschitz on $\R^n\times\R^n$, and the modulus $\omega$ can be taken linear. Since $\tilde \gamma_{h_i}(s) \in \stackrel{\circ}{B}(0,4)$ and $\lVert \dot {{\tilde \gamma}}_{h_i}(s) \rVert \leq A_1$,
we get the estimate
\begin{align*}
c\bigl(\varphi_0^{-1}(\tilde x_0 + h_0),&\varphi_1^{-1}(\tilde x_1 + h_1)\bigr)
- c\bigl(\varphi_0^{-1}(\tilde x_0),\varphi_1^{-1}(\tilde x_1)\bigr) \\
&\leq \int_0^\e DL_0\left(\varphi_0 \circ \gamma(t),(\varphi_0 \circ \gamma)'(t) \right)
\left(\frac{\e - t}{\e} h_0,-\frac{1}{\e}h_0 \right)\,dt\\
&+ \int_{1-\e}^1 D L_1\left(\varphi_1 \circ \gamma(t),(\varphi_1 \circ \gamma)'(t) \right)
\left(\frac{t-(1-\e)}{\e} h_1,\frac{1}{\e}h_1 \right)\,dt\\
&+ \omega\left(\frac{1}{\e} \lVert h_0\rVert_{\rm euc}\right) \frac{1}{\e} \lVert h_0\rVert_{\rm euc}
+ \omega\left(\frac{1}{\e} \lVert h_1\rVert_{\rm euc}\right) \frac{1}{\e} \lVert h_1\rVert_{\rm euc}.
\end{align*}
We observe that the sum of the first two terms in the right hand side is linear, while the sum of the last two is bounded by
$$
\frac{1}{\e} \omega\left(\frac{1}{\e} \lVert (h_0,h_1)\rVert_{\rm euc}\right) \lVert (h_0,h_1)\rVert_{\rm euc}.
$$
Therefore we obtain that
$$
(\tilde x_0,\tilde x_1) \mapsto c\left(\varphi_0^{-1}(\tilde x_0),\varphi_1^{-1}(\tilde x_1)\right)
$$
is semi-concave for the modulus $\tilde \omega(r)=\frac{1}{\e}\omega\left(\frac{1}{\e}r \right)$ on
$\stackrel{\circ}{\B} \times \stackrel{\circ}{\B}$, as wanted.
\end{proof}

\begin{corollary}\label{ComputationSuperdifferential}
{\sl If $L$ is a weak Tonelli Lagrangian on the connected manifold $M$, then, for every $t>0$,
a superdifferential of $c_{t,L}(x,y)$ at $(x_0,y_0)$ is given by
$$
(w_0,w_1) \mapsto \frac{\partial L}{\partial v}(\gamma(t),\dot \gamma(t))(w_1) -
\frac{\partial L}{\partial v}(\gamma(0),\dot \gamma(0))(w_0),
$$
where $\gamma:[0,t] \rightarrow M$ is a minimizer for $L$  with
$\gamma(0)=x_0$, $\gamma(t)=y_0$, and $(w_0,w_1)\in T_xM\times T_yM=T_{(x,y)}(M\times M)$.}
\end{corollary}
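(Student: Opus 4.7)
The plan is to read off the superdifferential directly from the competitor construction already used in the proof of Theorem \ref{costsemiconc}. Via the reduction $c_{t,L}=c_{1,L^t}$ stated just before that theorem, it suffices to treat $t=1$. Fix an $L$-minimizer $\gamma:[0,1]\to M$ from $x_0$ to $y_0$, and choose charts $\varphi_0,\varphi_1$ near $x_0,y_0$ with $\tilde x_i=\varphi_i(\gamma(i))$. The perturbed end pieces $\tilde\gamma_{h_0}$ on $[0,\e]$ and $\tilde\gamma_{h_1}$ on $[1-\e,1]$, concatenated with the middle arc $\gamma|_{[\e,\,1-\e]}$, form a competitor from $\varphi_0^{-1}(\tilde x_0+h_0)$ to $\varphi_1^{-1}(\tilde x_1+h_1)$, and the proof of Theorem \ref{costsemiconc} already displays the estimate
$$c_{1,L}\bigl(\varphi_0^{-1}(\tilde x_0+h_0),\varphi_1^{-1}(\tilde x_1+h_1)\bigr)-c_{1,L}(x_0,y_0)\leq I_0(h_0)+I_1(h_1)+R(h_0,h_1),$$
with $I_0,I_1$ linear in their arguments and $R$ controlled by $\tfrac{1}{\e}\|(h_0,h_1)\|_{\rm euc}\,\omega(\tfrac{1}{\e}\|(h_0,h_1)\|_{\rm euc})$ for a modulus $\omega$ (a modulus of continuity for $DL_0,DL_1$ on the compact region swept out by the competitors).

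The core of the work is to identify $I_0$ and $I_1$ intrinsically. Set $L_i(z,v)=L(\varphi_i^{-1}(z),D\varphi_i^{-1}(z)v)$; by Theorem \ref{regularityminimizer}, $\varphi_i\circ\gamma$ is C$^1$ and satisfies the integrated Euler--Lagrange equation $\frac{d}{ds}\frac{\partial L_i}{\partial v}=\frac{\partial L_i}{\partial z}$. Differentiating the definition $\tilde\gamma_{h_0}(s)=\frac{\e-s}{\e}h_0+\varphi_0(\gamma(s))$ with respect to $h_0$ at $h_0=0$ yields
$$I_0(h_0)=\int_0^\e\left[\frac{\partial L_0}{\partial z}\Bigl(\tfrac{\e-s}{\e}h_0\Bigr)-\frac{\partial L_0}{\partial v}\Bigl(\tfrac{h_0}{\e}\Bigr)\right]ds,$$
the partials being evaluated at $(\varphi_0\circ\gamma(s),(\varphi_0\circ\gamma)'(s))$. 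Substituting the Euler--Lagrange equation into the first summand and integrating by parts in $s$, the boundary term at $s=\e$ vanishes (since $(\e-s)/\e=0$ there), the bulk integral cancels the second summand, and only the boundary contribution at $s=0$ survives:
$$I_0(h_0)=-\frac{\partial L_0}{\partial v}\bigl(\tilde x_0,(\varphi_0\circ\gamma)'(0)\bigr)(h_0)=-\frac{\partial L}{\partial v}\bigl(\gamma(0),\dot\gamma(0)\bigr)(w_0),$$
where $w_0=D\varphi_0^{-1}(\tilde x_0)h_0\in T_{x_0}M$. The mirror computation at the other endpoint produces $I_1(h_1)=+\frac{\partial L}{\partial v}(\gamma(1),\dot\gamma(1))(w_1)$ with $w_1\in T_{y_0}M$.

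Assembling the two paragraphs, one obtains that in the chart $\varphi_0\times\varphi_1$, for $(x,y)$ near $(x_0,y_0)$ with chart-vector displacement $(w_0,w_1)$,
$$c_{1,L}(x,y)-c_{1,L}(x_0,y_0)\leq\frac{\partial L}{\partial v}\bigl(\gamma(1),\dot\gamma(1)\bigr)(w_1)-\frac{\partial L}{\partial v}\bigl(\gamma(0),\dot\gamma(0)\bigr)(w_0)+o\bigl(\|(w_0,w_1)\|\bigr).$$
Since $c_{1,L}$ is locally semi-concave at $(x_0,y_0)$ by Theorem \ref{costsemiconc}, this estimate is exactly what Definition \ref{defsuperdiff} requires for the displayed linear form to belong to $D^+c_{1,L}(x_0,y_0)$, as claimed. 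The only delicate step is the cancellation in the middle paragraph, which relies crucially on both the C$^1$ regularity of the minimizer and its satisfaction of the integrated Euler--Lagrange equation supplied by Theorem \ref{regularityminimizer}; everything else is the book-keeping already in place from Theorem \ref{costsemiconc}.
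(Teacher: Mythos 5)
Your proposal is correct and follows exactly the same route as the paper: it extracts the linear terms from the competitor estimate built in the proof of Theorem \ref{costsemiconc}, invokes the integrated Euler--Lagrange equation supplied by Theorem \ref{regularityminimizer}, and integrates by parts to isolate the boundary contributions $-\frac{\partial L}{\partial v}(\gamma(0),\dot\gamma(0))$ and $+\frac{\partial L}{\partial v}(\gamma(1),\dot\gamma(1))$. The computation and the identification in invariant terms via $w_i=D\varphi_i^{-1}(\tilde x_i)h_i$ are both accurate.
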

\begin{proof} Again we will do it only for $t=1$.
If we use the notation introduced in the previous proof, we see that a superdifferential of
$$
(\tilde x_0,\tilde x_1) \mapsto c\left(\varphi_0^{-1}(\tilde x_0),\varphi_1^{-1}(\tilde x_1)\right)
$$
is given by
$$
(h_0,h_1) \mapsto l_0(h_0) + l_1(h_1),
$$
where
\begin{multline}
\label{formulal0}
l_0(h_0)= -\int_0^\e \Bigl[\frac{t-\e}{\e}\frac{\partial L_0}{\partial x}\left(\varphi_0 \circ \gamma(t),(\varphi_0 \circ \gamma)'(t) \right)(h_0)\\
+ \frac{1}{\e} \frac{\partial L_0}{\partial v}\left(\varphi_0 \circ \gamma(t),(\varphi_0 \circ \gamma)'(t) \right)(h_0) \Bigr]\,dt,
\end{multline}
\begin{multline*}
l_1(h_1)= \int_{1-\e}^1 \Bigl[\frac{t-(1-\e)}{\e}\frac{\partial L_1}{\partial x}\left(\varphi_1 \circ \gamma(t),(\varphi_1 \circ \gamma)'(t) \right)(h_1)\\
+ \frac{1}{\e} \frac{\partial L_1}{\partial v}\left(\varphi_1 \circ \gamma(t),(\varphi_1 \circ \gamma)'(t) \right)(h_1)\Bigr]\,dt.
\end{multline*}
By Theorem \ref{regularityminimizer}, the curve $t \mapsto \varphi_0 \circ \gamma(t)$ is a
C$^1$ extremal of $L_0$ and it satisfies  the Euler-Lagrange equation
$$
\frac{d}{dt}\frac{\partial L_0}{\partial v}\left(\varphi_0 \circ \gamma(t),(\varphi_0 \circ \gamma)'(t) \right)
\frac{\partial L_0}{\partial x}\left(\varphi_0 \circ \gamma(t),(\varphi_0 \circ \gamma)'(t) \right).
$$
Using this identity in (\ref{formulal0}) an integrating by parts, we get
\begin{multline*}
l_0(h_0)=-\frac{\partial L_0}{\partial v}\left(\varphi_0 \circ \gamma(0),(\varphi_0 \circ \gamma)'(0) \right)(h_0).\\
\end{multline*}
This means that $l_0$, reinterpreted on $T_{x_0}M$ rather than on $\R^n$, is given by
$-\frac{\partial L}{\partial v}\left(\gamma(0),\dot \gamma(0) \right)$.
The treatment for $l_1$ is the same.
\end{proof}
We have avoided the first variation formula in the proof of Corollary
\ref{ComputationSuperdifferential}, because this is usually proven
for $C^2$ variation of curves and $C^2$ Lagrangians.
Of course, our argument to prove this Corollary is basically a proof for the first variation formula for $C^1$ Lagrangians. This is of course already known and the proof is the standard one.
\subsection{The twist condition for costs obtained from Lagrangians}

\begin{lemma}\label{lemmaUC}{\sl Let  $L$ be a weak Tonelli Lagrangian on the connected manifold $M$. Suppose that $L$ satisfies the following condition:
\begin{itemize}
\item[\rm (UC)] If $\gamma_i:[a_i,b_i]\to M, i=1,2$ are two $L$-minimizers such that
$\gamma_1(t_0)=\gamma_2(t_0)$ and $\dot\gamma_1(t_0)=\dot\gamma_2(t_0)$, for some $t_0\in [a_1,b_1]\cap [a_2,b_2]$, then $\gamma_1=\gamma_2$ on the whole interval $[a_1,b_1]\cap [a_2,b_2]$.
\end{itemize}
Then, for every $t>0$, the cost $c_{t,L}:M\times M\to\R$ satisfies the left (and the right) twist condition of Definition \ref{twistcondition}.

Moreover, if $(x,y)\in {\cal D}(\Lambda ^l_{c_{t,L}})$, then we have:
\begin{itemize}
\item[\rm (i)]
there is a unique $L$-minimizer $\gamma:[0,t]\to M$ such that $x=\gamma(0)$, and
 $y=\gamma(t)$;
 \item[\rm (ii)]  the speed $\dot\gamma(0)$ is uniquely determined by the equality
 $$
\frac{\partial c_{t,L}}{\partial x}(x,y)=-\frac{\partial L}{\partial v}(x,\dot\gamma(0)).
$$
\end{itemize}}
\end{lemma}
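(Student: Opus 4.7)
The plan is to combine three ingredients that have already been established: the explicit superdifferential formula of Corollary \ref{ComputationSuperdifferential}, the injectivity of the global Legendre transform (Proposition \ref{LegendreHomeo}), and the uniqueness-of-minimizers condition (UC). The key observation is that a superdifferential of a function forces the value of an existing partial derivative: if $f$ is defined near $(x,y)$ and $\frac{\partial f}{\partial x}(x,y)$ exists, then every $p=(p_0,p_1)\in D^+f(x,y)$ must satisfy $p_0=\frac{\partial f}{\partial x}(x,y)$, as can be seen by writing the superdifferential inequality $f(x+h,y)-f(x,y)\leq p_0(h)+o(\lVert h\rVert)$ along both directions $h$ and $-h$.

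First I would fix $(x,y)\in \mathcal D(\Lambda^l_{c_{t,L}})$. By Theorem \ref{existenceminimizer} there exists at least one $L$-minimizer $\gamma:[0,t]\to M$ with $\gamma(0)=x$ and $\gamma(t)=y$. Corollary \ref{ComputationSuperdifferential} provides the superdifferential
$$
(w_0,w_1)\mapsto \frac{\partial L}{\partial v}(\gamma(t),\dot\gamma(t))(w_1)-\frac{\partial L}{\partial v}(\gamma(0),\dot\gamma(0))(w_0)
$$
of $c_{t,L}$ at $(x,y)$. Applying the general observation above in the $x$-variable, we deduce
$$
\frac{\partial c_{t,L}}{\partial x}(x,y)=-\frac{\partial L}{\partial v}(x,\dot\gamma(0)).
$$
This identity does not depend on the choice of minimizer. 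If $\gamma_1,\gamma_2$ are two minimizers from $x$ to $y$, this gives $\frac{\partial L}{\partial v}(x,\dot\gamma_1(0))=\frac{\partial L}{\partial v}(x,\dot\gamma_2(0))$. The injectivity of the global Legendre transform (Proposition \ref{LegendreHomeo}) then yields $\dot\gamma_1(0)=\dot\gamma_2(0)$, and condition (UC) applied at $t_0=0$ forces $\gamma_1=\gamma_2$ on $[0,t]$. This proves (i), and (ii) is just a restatement of the identity above.

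With (i) and (ii) in hand, the left twist condition is immediate: suppose $(x,y_1),(x,y_2)\in\mathcal D(\Lambda^l_{c_{t,L}})$ satisfy $\Lambda^l_{c_{t,L}}(x,y_1)=\Lambda^l_{c_{t,L}}(x,y_2)$, i.e.\ $\frac{\partial c_{t,L}}{\partial x}(x,y_1)=\frac{\partial c_{t,L}}{\partial x}(x,y_2)$. Let $\gamma_i:[0,t]\to M$ be the unique $L$-minimizer from $x$ to $y_i$ given by (i). Part (ii) together with the injectivity of $\Leg$ shows $\dot\gamma_1(0)=\dot\gamma_2(0)$; since also $\gamma_1(0)=\gamma_2(0)=x$, condition (UC) gives $\gamma_1\equiv\gamma_2$ on $[0,t]$, whence $y_1=\gamma_1(t)=\gamma_2(t)=y_2$. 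The right twist condition is obtained by the same argument using the $y$-component of the superdifferential together with the endpoint-velocity correspondence.

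The main conceptual obstacle is not any hard estimate but the passage from a mere superdifferential at $(x,y)$ to an identification of $\frac{\partial c_{t,L}}{\partial x}$ with $-\frac{\partial L}{\partial v}(x,\dot\gamma(0))$; once this is noticed, everything reduces to injectivity of $\Leg$ and to (UC). Note that we do not need $c_{t,L}$ to be (jointly) differentiable at $(x,y)$: existence of the partial derivative in the $x$-variable alone is sufficient, which is what makes the statement adapted to the setting of Theorem \ref{mainthm}.
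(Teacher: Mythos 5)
Your proof is correct and follows essentially the same route as the paper's: use Corollary~\ref{ComputationSuperdifferential} to identify $\frac{\partial c_{t,L}}{\partial x}(x,y)$ with $-\frac{\partial L}{\partial v}(x,\dot\gamma(0))$, then invoke injectivity of $v\mapsto \frac{\partial L}{\partial v}(x,v)$ and condition (UC). In fact you are slightly more careful than the paper on one point worth noting: the paper writes ``From Corollary~\ref{ComputationSuperdifferential} we obtain the equality'' $\frac{\partial c_{t,L}}{\partial x}(x,y)=-\frac{\partial L}{\partial v}(x,\dot\gamma(0))$, but the Corollary only supplies a \emph{superdifferential} of $c_{t,L}$ at $(x,y)$; the equality requires the additional (easy but genuine) observation that when a partial derivative exists, any superdifferential must have that partial derivative as its corresponding component --- precisely the two-sided estimate in $h$ and $-h$ that you spell out. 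Your treatment of the right twist condition by symmetry (working with $\dot\gamma(t)$ and applying (UC) at $t_0=t$) is also correct; the paper simply asserts it without comment.
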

\begin{proof}
We first prove part (ii).
Pick $\gamma:[0,t]\to M$ an $L$-minimizer with $x=\gamma(0)$ and
$y=\gamma(t)$. From Corollary \ref{ComputationSuperdifferential} we obtain the equality
\begin{equation*}
\frac{\partial c_{t,L}}{\partial x}(x,y)=-\frac{\partial L}{\partial v}(x,\dot\gamma(0)). \tag{$*$}
\end{equation*}
Since the C$^1$ map $v\mapsto L(x,v)$ is strictly convex, the Legendre transform $v\in T_xM\mapsto \partial L/\partial v(x,v)$ is injective, and therefore $\dot\gamma(0)\in T_xM$ is indeed uniquely determined  by Equation ($*$) above. This proves (ii).

To prove statement (i), consider  another $L$-minimizer $\gamma_1:[0,t]\to M$ is  $x=\gamma_1(0)$. By what we just said, we also have
$$\frac{\partial c_{t,L}}{\partial x}(x,y)=-\frac{\partial L}{\partial v}(x,\dot\gamma_1(0)).$$
By the uniqueness already proved in statement (ii), we get
 $\dot\gamma_1(0)=\dot\gamma(0)$. It now follows from condition (UC) that $\gamma=\gamma_1$ on the whole interval $[0,t]$.

The twist condition follows easily. Consider $(x,y),(x,y_1)\in {\cal D}(\Lambda ^l_{c_{t,L}})$ such that
\begin{equation*}\frac{\partial c_{t,L}}{\partial x}(x,y)=\frac{\partial c_{t,L}}{\partial x}(x,y_1),\tag{$**$}.
\end{equation*}
By (i) there is a unique $L$-minimizer $\gamma:[0,t]\to M$ (resp.\ $\gamma_1:[0,t]\to M$) such that
$x=\gamma(0),y=\gamma(1)$ (resp.\ $x=\gamma_1(0),y_1=\gamma_1(1)$), and
$$
\frac{\partial c_{t,L}}{\partial x}(x,y)=-\frac{\partial L}{\partial v}(x,\dot\gamma(0))\quad
\text{and} \quad
\frac{\partial c_{t,L}}{\partial x}(x,y_1)=-\frac{\partial L}{\partial v}(x,\dot\gamma_1(0)).
$$
>From equation ($**$), and the injectivity of the Legendre transform of $L$, it follows that  $\dot\gamma_1(0)=\dot\gamma(0)$. From condition (UC) we get $\gamma=\gamma_1$ on the whole interval $[0,t]$. In particular, we obtain $y=\gamma(t)=\gamma_1(t)=y_1$.
\end{proof}
The next lemma is an easy consequence of Lemma \ref{lemmaUC} above.
\begin{lemma}\label{lemmaUC'}{\sl Let $L$ be a weak Tonelli Lagrangian on $M$. If we can find a continuous local flow $\phi_t$ defined on $TM$ such that:
\begin{itemize}
\item[\rm (UC')] for every $L$-minimizer
$\gamma:[a,b]\to M$, and every $t_1,t_2\in [a,b]$,
the point $\phi_{t_2-t_1}(\gamma(t_1),\dot\gamma(t_1))$ is defined and
$(\gamma(t_2),\dot\gamma(t_2))=\phi_{t_2-t_1}(\gamma(t_1),\dot\gamma(t_1))$,
\end{itemize}
then $L$ satisfies {\rm (UC)}. Therefore, for every $t>0$, the cost $c_{t,L}:M\times M\to\R$ satisfies the left twist (and the right) condition of Definition \ref{twistcondition}.

Moreover, if $(x,y)\in {\cal D}(\Lambda ^l_{c_{t,L}})$, then $y=\pi\phi_t(x,v)$, where $\pi:TM\to M$ is the canonical projection, and $v\in T_xM$ is uniquely determined by the equation
$$\frac{\partial c_{t,L_{r,g}}}{\partial x}(x,y)=-\frac{\partial L}{\partial v}(x,v).$$
The curve $s\in [0,t]\mapsto \pi\phi_s(x,v)$ is the unique $L$-minimizer $\gamma:[0,t] \to M$ with $\gamma(0)=x, \gamma(1)=y$.}
\end{lemma}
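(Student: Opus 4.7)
The plan is to derive this essentially as a direct corollary of Lemma \ref{lemmaUC}, with (UC') serving two purposes: verifying the abstract uniqueness property (UC), and furnishing an explicit flow formula for the minimizer.

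First I would check that (UC') implies (UC). Suppose $\gamma_1:[a_1,b_1]\to M$ and $\gamma_2:[a_2,b_2]\to M$ are $L$-minimizers with $\gamma_1(t_0)=\gamma_2(t_0)$ and $\dot\gamma_1(t_0)=\dot\gamma_2(t_0)$ for some $t_0\in[a_1,b_1]\cap[a_2,b_2]$. For any $t$ in this common interval, (UC') applied to each $\gamma_i$ gives
$$(\gamma_i(t),\dot\gamma_i(t))=\phi_{t-t_0}(\gamma_i(t_0),\dot\gamma_i(t_0)), \qquad i=1,2.$$
Since the two right-hand sides are equal by the hypothesis on the initial data, projecting by $\pi:TM\to M$ yields $\gamma_1(t)=\gamma_2(t)$ on the common interval. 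Hence (UC) holds, and Lemma \ref{lemmaUC} immediately provides both the left and right twist conditions for $c_{t,L}$.

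For the remaining assertions, fix $(x,y)\in{\cal D}(\Lambda^l_{c_{t,L}})$. By Lemma \ref{lemmaUC}(i) there is a unique $L$-minimizer $\gamma:[0,t]\to M$ with $\gamma(0)=x$ and $\gamma(t)=y$, and by Lemma \ref{lemmaUC}(ii) its initial velocity $v:=\dot\gamma(0)\in T_xM$ is uniquely determined by
$$\frac{\partial c_{t,L}}{\partial x}(x,y)=-\frac{\partial L}{\partial v}(x,v).$$
I would then invoke (UC') on $\gamma$ itself with $t_1=0$ and $t_2=s$ to obtain $(\gamma(s),\dot\gamma(s))=\phi_s(x,v)$ for every $s\in[0,t]$. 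Projecting to $M$ yields $\gamma(s)=\pi\phi_s(x,v)$, and in particular $y=\pi\phi_t(x,v)$. The uniqueness of the minimizer $s\mapsto\pi\phi_s(x,v)$ between $x$ and $y$ is exactly the uniqueness already provided by Lemma \ref{lemmaUC}(i).

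There is no real obstacle here: the whole argument is bookkeeping applied on top of Lemma \ref{lemmaUC}. The substantive ingredients, namely the existence of minimizers, the strict convexity of $v\mapsto L(x,v)$ that forces injectivity of the fiberwise Legendre transform, and the first-variation identity from Corollary \ref{ComputationSuperdifferential}, have all been absorbed into Lemma \ref{lemmaUC}; the role of (UC') is only to upgrade the local equality of initial data into global equality of curves through the determinism of the flow.
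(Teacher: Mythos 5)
Your proposal is correct and takes precisely the route the paper intends: the paper offers no proof beyond the remark that the lemma is an easy consequence of Lemma~\ref{lemmaUC}, and your argument is exactly the expected unwinding, with (UC') first upgraded to (UC) via determinism of the flow and the rest read off from parts (i) and (ii) of Lemma~\ref{lemmaUC}. (You also silently fixed two typos in the statement, $c_{t,L_{r,g}}$ for $c_{t,L}$ and $\gamma(1)$ for $\gamma(t)$, which is fine.)
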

Note that the following proposition is contained in Theorem \ref{ConnectionWithH}.
\begin{proposition}\label{TwistForTonelli}{\sl If $L$ is a Tonelli Lagrangian,
then it satisfies condition {\rm (UC')} for the Euler Lagrange flow $\phi^L_t$.}
\end{proposition}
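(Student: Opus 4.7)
The plan is to observe that this proposition is essentially a restatement of what has already been proved in Theorem \ref{ConnectionWithH}, once the necessary regularity is in place. First I would note that since $L$ is a Tonelli Lagrangian, condition (a$'$) gives that $L$ is C$^2$, and Proposition \ref{propertiesH} (parts (a$'^*$) and (b$'^*$)) then gives that the Hamiltonian $H$ is C$^2$ with $\partial^2 H/\partial p^2$ positive definite. In particular, the Hamiltonian vector field $X_H$ is C$^1$, so by the standard ODE theory it generates a (partial) C$^1$ local flow $\phi_t^H$ on $T^*M$, which is in particular continuous in $(t,x,p)$. Combined with the fact from Proposition \ref{LegendreHomeo} that $\Leg$ is a C$^1$ diffeomorphism, the Euler-Lagrange flow $\phi^L_t = \Leg^{-1}\circ \phi^H_t\circ \Leg$ is a continuous partial flow on $TM$, as required by Lemma \ref{lemmaUC'}.

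The core of the proof is then to check the trajectory identity (UC$'$). Given an $L$-minimizer $\gamma:[a,b]\to M$, Theorem \ref{regularityminimizer} first ensures that $\gamma$ is C$^1$, so that the speed curve $s\mapsto (\gamma(s),\dot\gamma(s))$ is a well-defined continuous curve in $TM$. Then Theorem \ref{ConnectionWithH} directly gives that the Legendre-transformed curve $s\mapsto \Leg(\gamma(s),\dot\gamma(s))$ is a C$^1$ solution of $X_H$ on $[a,b]$. By uniqueness of ODE solutions for the C$^1$ vector field $X_H$, this solution coincides, for any $t_1\in[a,b]$ and all admissible $s$, with $\phi^H_{s-t_1}\bigl(\Leg(\gamma(t_1),\dot\gamma(t_1))\bigr)$.

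To conclude, I would apply $\Leg^{-1}$ to the previous identity: for all $t_1,t_2\in[a,b]$,
$$
(\gamma(t_2),\dot\gamma(t_2)) = \Leg^{-1}\circ \phi^H_{t_2-t_1}\circ \Leg(\gamma(t_1),\dot\gamma(t_1)) = \phi^L_{t_2-t_1}(\gamma(t_1),\dot\gamma(t_1)),
$$
which is precisely condition (UC$'$) for $\phi^L_t$. There is no genuine obstacle here beyond invoking the right results; the only subtlety worth flagging is that (UC$'$) requires $\phi^L_t$ to be a continuous local flow on all of $TM$, not merely defined along a single orbit, and this is handled by the general ODE theory applied to $X_H$ together with the C$^1$ character of $\Leg$ given by Proposition \ref{LegendreHomeo}.
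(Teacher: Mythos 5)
Your proof is correct and takes the same approach as the paper: the paper simply notes, just before the statement, that "the following proposition is contained in Theorem \ref{ConnectionWithH}," and your argument is precisely the unpacking of that remark (C$^2$ regularity of $L$ and $H$, the C$^1$ character of $X_H$ and $\Leg$, Theorem \ref{ConnectionWithH} identifying the Legendre transform of a minimizer's speed curve as an integral curve of $X_H$, and ODE uniqueness).
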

\begin{proposition}\label{RiemannianUFC'}{\sl Suppose $g$ is a complete Riemannian metric on the connected manifold $M$, and $r>1$. For a given $t>0$, the cost
$c_{t,L_{r,g}}$of  the weak Tonelli Lagrangian $L_{r,g}$, defined by
$$L_{r,g}(x,v)=\lVert v\rVert_x^r=g_x(v,v)^{r/2},$$
 is given by
$$c_{t,L_{r,g}}=t^{r-1}d_g^r(x,y),$$
where $d_g$ is the distance defined by the Riemannian metric.
The Lagrangian $L_{r,g}$ satisfies condition {\rm (UC')} of Lemma \ref{lemmaUC'} for the geodesic flow $\phi^g_t$ of $g$. Therefore its  cost $c_{t,L_{r,g}}$ satisfies the left (and the right) twist condition.
Moreover,  if $(x,y)\in {\cal D}(\Lambda ^l_{c_{t,L_{r,g}}})$, then $y=\pi\phi^g_t(x,v)$, where $\pi:TM\to M$ is the canonical projection, and $v\in T_xM$ is uniquely determined by the equation
$$\frac{\partial c_{t,L_{r,g}}}{\partial x}(x,y)=-\frac{\partial L_{r,g}}{\partial v}(x,v).$$
}
\end{proposition}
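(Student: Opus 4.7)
My plan proceeds in three steps: first, compute $c_{t,L_{r,g}}$ by Jensen's inequality together with Hopf--Rinow; second, use the equality case of Jensen to identify $L_{r,g}$-minimizers with constant-speed minimizing geodesics, which immediately yields condition (UC') for the geodesic flow $\phi^g_s$; third, invoke Lemma \ref{lemmaUC'} to read off the twist condition and the characterization of $y$ in terms of $\phi^g_t$.

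For Step 1, let $\gamma:[0,t]\to M$ be absolutely continuous with $\gamma(0)=x$ and $\gamma(t)=y$. Since $u\mapsto u^r$ is strictly convex on $[0,\infty)$ for $r>1$, Jensen's inequality gives
\[
\int_0^t\lVert\dot\gamma(s)\rVert_{\gamma(s)}^r\,ds \;\geq\; t\left(\frac{1}{t}\int_0^t\lVert\dot\gamma(s)\rVert_{\gamma(s)}\,ds\right)^{\!r} \;=\; \frac{\ell_g(\gamma)^r}{t^{r-1}} \;\geq\; \frac{d_g(x,y)^r}{t^{r-1}},
\]
where $\ell_g(\gamma)$ denotes the Riemannian length. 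By completeness of $g$, Hopf--Rinow furnishes a minimizing geodesic between $x$ and $y$; parametrized on $[0,t]$ at constant speed $d_g(x,y)/t$, it saturates both inequalities, so the infimum equals $d_g(x,y)^r/t^{r-1}$, establishing the claimed cost formula.

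For Step 2, any $L_{r,g}$-minimizer $\gamma$ on $[0,t]$ must attain equality throughout the above chain. Strict convexity of $u^r$ forces the Jensen equality case, so $\lVert\dot\gamma\rVert_{\gamma}$ is constant a.e.; the identity $\ell_g(\gamma)=d_g(\gamma(0),\gamma(t))$ then shows $\gamma$ is a minimizing geodesic traversed at constant speed. Since $L_{r,g}$ is a weak Tonelli Lagrangian by Example \ref{ExampleRiemannian}, Theorem \ref{regularityminimizer} ensures $\gamma$ is $C^1$, and its speed curve is then by definition an orbit of the geodesic flow $\phi^g_s$ on $TM$. Applying this to any subinterval $[t_1,t_2]$ of the domain of an $L_{r,g}$-minimizer yields $(\gamma(t_2),\dot\gamma(t_2))=\phi^g_{t_2-t_1}(\gamma(t_1),\dot\gamma(t_1))$, which is exactly condition (UC') of Lemma \ref{lemmaUC'} for the flow $\phi^g_s$.

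With (UC') in hand, Lemma \ref{lemmaUC'} immediately yields the left (and right) twist condition for $c_{t,L_{r,g}}$, together with the representation $y=\pi\phi^g_t(x,v)$ whenever $(x,y)\in\mathcal{D}(\Lambda^l_{c_{t,L_{r,g}}})$ and the characterization of $v$ as the unique solution of $\partial c_{t,L_{r,g}}/\partial x(x,y)=-\partial L_{r,g}/\partial v(x,v)$. The only real subtlety is handling the equality case of Jensen for merely absolutely continuous curves, but the strict convexity of $u\mapsto u^r$ for $r>1$ makes this routine; after that, the whole argument is bookkeeping that reduces to an already-established lemma.
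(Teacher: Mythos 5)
Your proof is correct and follows essentially the same route as the paper: the paper establishes the lower bound via H\"older's inequality with conjugate exponents $1/r+1/s=1$, which is exactly the power-mean inequality you obtain from Jensen's inequality, and both proofs then use the equality case to identify $L_{r,g}$-minimizers as constant-speed minimizing geodesics, verify (UC') for $\phi^g_s$, and invoke Lemma \ref{lemmaUC'}. (Incidentally, your computation, like the paper's own proof, yields $c_{t,L_{r,g}}=t^{1-r}d_g^r$, so the exponent $t^{r-1}$ appearing in the proposition's statement is a sign typo.)
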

\begin{proof} Define $s$ by $1/s + 1/r =1$. Let $\gamma:[a,b]\to M$ be a piecewise C$^1$ curve. Denoting by $\ell_g(\gamma)$ the Riemannian length of $\gamma$, we can apply H\"older inequality to obtain
$$\int_a^b\lVert \gamma(s)\rVert_x\, ds\leq (b-a)^{1/s}\left(\int_a^b\lVert \gamma(s)\rVert_x^r\, ds\right)^{1/r},$$
with equality if and only if $\gamma$ is parameterized with $\lVert \gamma(s)\rVert_x$ constant, i.e. proportionally to arc-length. This of course implies
$$(b-a)^{-r/s}\ell_g(\gamma)^r\leq\int_a^b\lVert \gamma(s)\rVert_x^r\, ds,$$
with equality if and only if $\gamma$ is parameterized  proportionally to arc-length.
Since any curve can be reparametrized proportionally to arc-length and $r/s=r-1$, we conclude that
$$c_{t,L_{r,g}}(x,y)=t^{1-r}d_g(x,y)^r,$$
and that an $L_{r,g}$-minimizing curve has to minimize the length between its end-points. Therefore any $L_{r,g}$-minimizing curve is a geodesic and its speed curve is an orbit of the geodesic flow $\phi^g_t$. Therefore $L_{r,g}$ satisfies  condition {\rm (UC')} of Lemma \ref{lemmaUC'} for the geodesic flow $\phi^g_t$ of $g$. The rest of the proposition follows from Lemma \ref{lemmaUC'}.
\end{proof}


\begin{thebibliography}{99}

\bibitem{AbrahamMarsden}{\sc R. Abraham \& J.E. Marsden:} {\em Foundations of mechanics. Second edition, revised and enlarged.} (1978)
Benjamin/Cummings Publishing Co. Inc. Advanced Book Program,
Reading, Mass.

\bibitem{amb1}{\sc L. Ambrosio:} {\em Lecture notes on optimal transport problems,} in Mathematical Aspects of Evolving Interfaces, Lecture Notes in Math., {\bf 1812}, (2003) Springer-Verlag, Berlin/New York, 1-52.

\bibitem{amasbut}
{\sc L. Ambrosio, O. Ascenzi \& G. Buttazzo:}
{\em Lipschitz regularity for minimizers of integral functionals with highly discontinuous integrands.}
J. Math. Anal. Appl., {\bf 142} (1989), no. 2, 301-316.

\bibitem{amgisa} {\sc L. Ambrosio, N. Gigli \& G. Savar\'e:}
{\em Gradient flows in metric spaces and in the Wasserstein space of probability measures.}
Lectures in Mathematics, ETH Zurich, (2005) Birkh\"auser.

\bibitem{ambrosio-pratelli}{\sc L. Ambrosio \& A. Pratelli:} {\em Existence and stability results in the $L^1$ theory of optimal transportation, in Optimal Transportation and Applications.} Lecture Notes in Math., {\bf 1813}, (2003) Springer-Verlag, Berlin/New York, 123-160.

\bibitem{bangert} {\sc V. Bangert:}
{\em Analytische Eigenschaften konvexer Funktionen auf Riemannschen Manigfaltigkeiten.}
J. Reine Angew. Math., {\bf 307} (1979), 309-324.

\bibitem{bernbuf} {\sc P. Bernard \& B. Buffoni:}
{\em Optimal mass transportation and Mather theory.}
J. Eur. Math. Soc., {\bf  9} (2007),  no. 1, 85-121.

\bibitem{bernbuf2}
{\sc P. Bernard \& B. Buffoni:} {\em The Monge problem for supercritical Ma\~n\'e potential
on compact manifolds.} Adv. Math., {\bf 207} (2006), no. 2, 691-706.

\bibitem{brenier}{\sc Y. Brenier:} {\em Polar factorization and monotone rearrangement of vector-valued functions.} {Comm. Pure Appl. Math.}, {\bf44}
(1991), 375-417.

\bibitem{butgiahil}{\sc G. Buttazzo, M. Giaquinta \& S. Hildebrandt:} {\em One-dimensional variational problems.} Oxford Lecture Series in Mathematics and its Applications,
{\bf 15} (1998), Oxford Univ. Press, New York.

\bibitem{caffarelli-feldman-mccann}
{\sc L. Caffarelli, M. Feldman \& R.J. McCann:} {\em Constructing optimal maps for Monge's transport problem as a limit of strictly convex costs.} J. Amer. Math. Soc., {\bf 15} (2002), 1-26.

\bibitem{CaffarelliSalsa}{\sc L. A. Caffarelli \& S. Salsa, Editors:}
{\em Optimal transportation and applications.}
{Lecture Notes in Mathematics},
{\bf 1813}. {Lectures from the C.I.M.E.\ Summer School held in Martina
            Franca, September 2--8, 2001}, (2003)
Springer-Verlag, Berlin.

 \bibitem{cansin}
{\sc P.Cannarsa \& C.Sinestrari:}
{\em Semiconcave Functions, Hamilton-Jacobi Equations, and Optimal Control.}
Progress in Nonlinear Differential Equations and Their Applications, {\bf 58} (2004),
Birkh\"auser, Boston.

\bibitem{ClarkeSIAM}{\sc F.H. Clarke:}
{\em Methods of dynamic and nonsmooth optimization.} {CBMS-NSF Regional Conference Series in Applied Mathematics},
{\bf 57} (1989),
{Society for Industrial and Applied Mathematics (SIAM)},
{Philadelphia, PA}.

\bibitem{ClarkeETDS}{\sc F.H.
  Clarke:} {\em A Lipschitz regularity theorem.} {Ergodic Theory Dynam. Systems}, to appear (2007).

\bibitem{ClarkeVinter}{\sc F.H.
  Clarke \& R.B. Vinter:} {\em Regularity properties of solutions to the basic problem in the
  calculus of variations.} {Trans. Amer. Math. Soc.},
{\bf 289} (1985), 73-98.

\bibitem{cohn}
{\sc D.L. Cohn:} {\em Measure theory.}
{Birkh\"auser Boston}, (1980), Mass.

\bibitem{evans}{\sc L.C. Evans:} {\em Partial differential equations and Monge-Kantorovich mass transfer,} in R. Bott et al., editors, {\em Current Developments in Mathematics.} (1997), International Press, Cambridge,  26-78.

\bibitem{evans-gangbo}{\sc L.C. Evans \& W. Gangbo:} {\em Differential equations methods for the Monge�antorovich mass transfer problem.} Mem. Amer. Math. Soc. {\bf 137} (1999).

\bibitem{fathiToulouse}
{\sc A. Fathi:} {\em Regularity of $C\sp 1$ solutions of the Hamilton-Jacobi equation.}  Ann. Fac. Sci. Toulouse Math. (6), {\bf 12}  (2003), 479-516.

\bibitem{fathi}
{\sc A. Fathi:}
{\em Weak KAM theorems in Lagrangian Dynamics.}
Book to appear.

\bibitem{federer}
{\sc H. Federer:} {\em Geometric measure theory. Die Grundlehren der mathematischen Wissenschaften,} {\bf 153} (1969) Springer-Verlag New York Inc., New York.

\bibitem{feldman-mccann}
{\sc M. Feldman \& R.J. McCann:} {\em Monge's transport problem on a Riemannian manifold.}
Trans. Amer. Math. Soc., {\bf 354} (2002), 1667-1697.

\bibitem{figalli}
{\sc A. Figalli:}
{\em The Monge problem on non-compact manifolds.}
Rend. Sem. Mat. Univ. Padova, {\bf 117} (2007), 147-166.

\bibitem{forni-mather}
{\sc J.N. Mather \& G. Forni:} {\em Action minimizing orbits in
Hamiltonian systems, Transition to chaos in classical and quantum mechanics,
Montecatini Terme, 1991.}
Lecture Notes in Math., {\bf 1589} (1994), {Springer},{Berlin},
{92-186}.

\bibitem{gangbo}{\sc W. Gangbo:} {\em The Monge mass transfer problem and its applications,
  in Monge Amp\`ere equation: applications to geometry and
      optimization.} {Contemp. Math.}, {\bf 226}  (1999),
{Amer. Math. Soc.}, {Providence, RI}, {79-104}.

\bibitem{gangbo-mccann}{\sc W. Gangbo \& R.J. McCann:} {\em The geometry of optimal transportation.} Acta Math., {\bf 177}
(1996), 113-161.

\bibitem{HoferZehnder}{\sc H. Hofer \& E. Zehnder:} {\em Symplectic invariants and Hamiltonian dynamics.} Birkhuser Advanced Texts: Basler Lehrb\"ucher. (1994) Birkhuser Verlag, Basel.

\bibitem{kant1}
{\sc L.V. Kantorovich:}
{\em On the transfer of masses.}
Dokl. Akad. Nauk. SSSR, {\bf 37} (1942), 227-229.

\bibitem{kant2}
{\sc L.V. Kantorovich:}
{\em On a problem of Monge.}
Uspekhi Mat.Nauk., {\bf 3} (1948), 225-226.

\bibitem{knott-smith} {\sc M. Knott \& C.S. Smith:} {\em On the optimal mapping of distributions.} J. Optim. Theory Appl.,
{\bf 43} (1984), 39�9.

\bibitem{mather}
{\sc J.N. Mather:}
{\em Existence of quasiperiodic orbits for twist homeomorphisms of the annulus.} Topology,  {\bf 21}  (1982), 457-467.

\bibitem{mccann3}
{\sc R. McCann:}
{\em Polar factorization of maps on Riemannian manifolds.}
Geom. Funct. Anal., {\bf 11} (2001), no. 3, 589-608.

\bibitem{monge}
{\sc G. Monge:} {\em M\'emoire sur la Th\'eorie des D\'eblais et des Remblais.} Hist. de l'Acad. des Sciences de Paris (1781), 666-704.

\bibitem{Moserbook}{\sc J. Moser:}
{\em Selected chapters in the calculus of variations},
{Lectures in Mathematics ETH Z\"urich}, Lecture notes by Oliver Knill, (2003), Birkh\"auser Verlag,
{Basel}.

\bibitem{rachev-ruschendorf-article}
{\sc S.T. Rachev \& L. R\"uschendorf:}
{\em A characterization of random variables with minimum $L\sp 2$-distance.} J. Multivariate Anal.,
{\bf 32} (1990), {48-54}. Corrigendum in {J. Multivariate Anal.}, {\bf 34} (1990), {156}.

\bibitem{racrus}
{\sc S.T. Rachev \& L. R\"uschendorf:}
{\em Mass transportation problems.}
Vol. I: Theory, Vol. II: Applications.
Probability and its applications, Springer, (1998).

\bibitem{schteich}{\sc W. Schachermayer \& J. Teichmann:} {\em Characterization of optimal Transport Plans for the Monge-Kantorovich-Problem.}  Proceedings of the A.M.S., to appear.

\bibitem{sudakov}{\sc V.N. Sudakov:} {\em Geometric problems in the theory of infinite-dimensional probability distributions.} Proc. Steklov Inst. Math. {\bf 141} (1979), 1-178.

\bibitem{trudinger-wang}
{\sc N.S. Trudinger \& X.J. Wang:} {\em On the Monge mass transfer problem.} Calc. Var. Partial Differential Equations, {\bf 13} (2001), 19-31.

\bibitem{villani1}
{\sc C. Villani:}
{\em Topics in mass transportation.}
Graduate Studies in Mathematics, {\bf 58} (2004), American Mathematical Society, Providence, RI.

\bibitem{villani2}
{\sc C. Villani:}
{\em Optimal transport, old and new.}
Lecture notes, 2005 Saint-Flour summer school.

\bibitem{vinterbook}
{\sc R. Vinter:}
{\em Optimal control},
 {Systems \& Control: Foundations \& Applications}, (2000),
{Birkh\"auser Boston Inc.},
{Boston, MA}.

\end{thebibliography}
\end{document}